\documentclass[11pt]{amsart}

\usepackage[utf8]{inputenc}
\usepackage{amsmath,amssymb,amsthm,mathtools}
\usepackage[margin=1.2in]{geometry}
\usepackage{enumitem}
\usepackage[hidelinks]{hyperref}

\mathtoolsset{showonlyrefs}

\newtheorem{theorem}{Theorem}[section]
\newtheorem{proposition}[theorem]{Proposition}
\newtheorem{lemma}[theorem]{Lemma}
\newtheorem{corollary}[theorem]{Corollary}
\theoremstyle{definition}
\newtheorem{remark}[theorem]{Remark}

\numberwithin{equation}{section}

\begin{document}

\title[Affine variational functionals, isoperimetric problems and stability]{Affine variational functionals, isoperimetric problems and stability}

\author{Francisco Mar\'in Sola}
\address{Department of Engineering and Technology of Computers, area of Applied Mathematics. Universidad de Murcia, Spain.}
\email{francisco.marin7@um.es}

\thanks{The author is supported by the grant ``Proyecto financiado por la CARM a trav\'es de la convocatoria de Ayudas a proyectos para el desarrollo de investigaci\'on cient\'ifica y t\'ecnica por grupos competitivos, incluida en el Programa Regional de Fomento de la Investigaci\'on Cient\'ifica y T\'ecnica (Plan de Actuaci\'on 2022) de la Fundaci\'on S\'eneca-Agencia de Ciencia y Tecnolog\'ia de la Regi\'on de Murcia, REF. 21899/PI/22''. This work is also associated with grant PID2025-169523NB-I00, provisionally proposed for funding by MICIU/AEI under the 2025 “Knowledge Generation Projects” call}

\subjclass[2020]{Primary 52A40, 35P15; Secondary 46E35, 49Q10}
\keywords{Affine Sobolev inequalities; affine $p$-Laplacian; affine variational functionals; Busemann--Petty centroid inequality; stability}

\begin{abstract}

We study three affine variational quantities built on the affine $L^p$ energy of Lutwak, Yang and Zhang: the affine Poincar\'e--Sobolev constants $\Lambda^{\mathcal{A}}_{p,q}(\Omega)$ of a bounded open set, the associated affine torsional rigidity, and the affine $p$-capacity of a compact set. We show that ellipsoids are the only extremal sets in their corresponding isoperimetric inequalities, and that the affine capacity of a compact set of positive volume is attained by a potential solving the affine $p$-Laplace equation. We also prove an affine Kohler-Jobin inequality: among sets with prescribed affine torsional rigidity, only ellipsoids minimize $\Lambda^{\mathcal{A}}_{p,q}$. Following the approach of Haddad, Jim\'enez and Montenegro to affine Sobolev inequalities, our methods rest on the $L_p$ Busemann--Petty centroid inequality. Proving a stability version of it, we obtain quantitative forms of the isoperimetric inequalities for $\Lambda^{\mathcal{A}}_{p,q}$, the affine torsional rigidity and the affine $p$-capacity, together with a stability result for the affine Kohler-Jobin inequality on convex sets.
\end{abstract}

\maketitle

\section{Introduction}

For $1<p<n$, the sharp Sobolev inequality states that
\begin{equation}\label{eq:Sobolev}
S_{n,p}\,\|f\|_{L^{p^*}(\mathbb{R}^n)}\leq\|\nabla f\|_{L^p(\mathbb{R}^n)},\qquad p^*=\frac{np}{n-p},
\end{equation}
for every $f\in W^{1,p}(\mathbb{R}^n)$; the optimal constant $S_{n,p}$ and the extremal functions were determined by Aubin \cite{Aubin1976} and Talenti \cite{Talenti1976}. On a bounded open set $\Omega\subset\mathbb{R}^n$ the Dirichlet energy also controls the $L^p$ norm: the Poincar\'e inequality states that $\|\nabla v\|_{L^p(\Omega)}\geq c\,\|v\|_{L^p(\Omega)}$ for every $v\in W^{1,p}_0(\Omega)$, with $c>0$ depending on $n$, $p$ and $\Omega$. Together, the two inequalities control every subcritical norm, and the optimal constants
\begin{equation}\label{eq:PS}
\Lambda_{p,q}(\Omega)=\inf\Big\{\int_\Omega|\nabla v|^p\,dx\ :\ v\in W^{1,p}_0(\Omega),\ \|v\|_{L^q(\Omega)}=1\Big\},\qquad 1\leq q<p^*,
\end{equation}
where $p^*=\infty$ if $p\geq n$, are the Poincar\'e--Sobolev constants of $\Omega$. Throughout the paper $p>1$, and $1<p<n$ whenever capacities are involved.

The constants \eqref{eq:PS} are the basic variational functionals associated with the $p$-Laplace operator $\Delta_pu=\operatorname{div}(|\nabla u|^{p-2}\nabla u)$. The infimum in \eqref{eq:PS} is attained, and every nonnegative minimizer $u$ solves $-\Delta_pu=\Lambda_{p,q}(\Omega)\,u^{q-1}$ in $\Omega$ with $u=0$ on $\partial\Omega$. For $q=p$, $\Lambda_{p,p}(\Omega)$ is the first eigenvalue of the Dirichlet $p$-Laplacian. For $q=1$, the quantity $T_p(\Omega)=\Lambda_{p,1}(\Omega)^{-1/(p-1)}$ is the $p$-torsional rigidity of $\Omega$, and $T_p(\Omega)=\int_\Omega u_\Omega\,dx$, where $u_\Omega$ solves $-\Delta_pu=1$ in $\Omega$ with $u=0$ on $\partial\Omega$ (see \cite[Proposition 2.2]{Brasco2014}). By the P\'olya--Szeg\H{o} principle, balls are optimal at fixed volume: if $\Omega^\star$ denotes the open ball centered at the origin with $|\Omega^\star|=|\Omega|$, then
\begin{equation}\label{eq:FK}
\Lambda_{p,q}(\Omega)\geq\Lambda_{p,q}(\Omega^\star).
\end{equation}
For $q=p$ this is the Faber--Krahn inequality, and for $q=1$ it is the Saint-Venant inequality $T_p(\Omega)\leq T_p(\Omega^\star)$, proved by P\'olya \cite{Polya1948} for $p=2$ (see also \cite{PolyaSzego1951}). P\'olya and Szeg\H{o} \cite{PolyaSzego1951} conjectured a stronger statement for $p=2$: among sets with given torsional rigidity, balls minimize the first eigenvalue. Kohler-Jobin proved it by introducing a new rearrangement \cite{KohlerJobin1975,KohlerJobin1978,KohlerJobin1982}, and Brasco \cite{Brasco2014} extended her method to every $p>1$ and every $1<q<p^*$, proving that
\begin{equation}\label{eq:KJEuclid}
T_p(\Omega)^{(p-1)\sigma}\,\Lambda_{p,q}(\Omega)\geq T_p(B^n)^{(p-1)\sigma}\,\Lambda_{p,q}(B^n),\qquad\sigma=\frac{\frac{p}{n}+\frac{p}{q}-1}{\frac{p}{n}+p-1},
\end{equation}
with equality only for balls. Here $B^n$ is the Euclidean unit ball, and the exponent makes the left-hand side invariant under dilations of $\Omega$. Since balls maximize $T_p$ at fixed volume, \eqref{eq:KJEuclid} together with the Saint-Venant inequality implies \eqref{eq:FK} for $q>1$ \cite[Remark 1.2]{Brasco2014}. The stability of \eqref{eq:FK} is measured by the Fraenkel asymmetry
\[
\mathcal{F}(S)=\inf\Big\{\frac{|S\triangle B|}{|S|}\ :\ B\text{ a ball with }|B|=|S|\Big\}
\]
of a set $S$ of positive finite measure. Fusco, Maggi and Pratelli \cite{FMP2009} proved that the deficit in \eqref{eq:FK} controls a power of $\mathcal{F}(\Omega)$ for every $p>1$ and $1\leq q<p^*$, and the sharp form was established by Brasco, De Philippis and Velichkov \cite{BDV2015} for $p=2$ and by Fusco and Zhang \cite{FuscoZhang2017} for every $p>1$: there is $c=c(n,p,q)>0$ such that
\begin{equation}\label{eq:quantFK}
\Lambda_{p,q}(\Omega)\geq\big(1+c\,\mathcal{F}(\Omega)^2\big)\Lambda_{p,q}(\Omega^\star)
\end{equation}
for every open set $\Omega$ of finite measure. We refer to the survey by Brasco and De Philippis \cite{BrascoDePhilippis2017} for the history of these results. To the best of our knowledge there is no quantitative form of \eqref{eq:KJEuclid}.

For $1<p<n$, the same energy governs the $p$-capacity of a compact set $K\subset\mathbb{R}^n$,
\begin{equation}\label{eq:cap}
\operatorname{cap}_p(K)=\inf\Big\{\int_{\mathbb{R}^n}|\nabla u|^p\,dx\ :\ u\in C^\infty_c(\mathbb{R}^n),\ u\geq1\ \text{on }K\Big\}.
\end{equation}
The infimum is attained in the homogeneous Sobolev space $D^{1,p}(\mathbb{R}^n)$ by a unique function, the capacitary potential of $K$. It is $p$-harmonic in $\mathbb{R}^n\setminus K$, and $-\operatorname{div}(|\nabla u|^{p-2}\nabla u)$ is a nonnegative measure concentrated on $\partial K$, the equilibrium measure of $K$, whose total mass is $\operatorname{cap}_p(K)$. We refer to the books by Maz'ya \cite{Mazya2011} and by Heinonen, Kilpel\"ainen and Martio \cite{HKM2006} for background.

Among compact sets of given volume, balls have the least capacity: if $K^\star$ denotes the closed ball centered at the origin with $|K^\star|=|K|$, then
\begin{equation}\label{eq:isocap}
\operatorname{cap}_p(K)\geq\operatorname{cap}_p(K^\star).
\end{equation}
This isocapacitary inequality also follows from the P\'olya--Szeg\H{o} principle. Its stability was conjectured by Hall, Hayman and Weitsman \cite{HHW1991} in terms of the Fraenkel asymmetry $\mathcal{F}(K)$. Fusco, Maggi and Pratelli \cite{FMP2009} also proved a first quantitative version, and the sharp one was established by De Philippis, Marini and Mukoseeva \cite{DPMM2021} for $p=2$ and by Mukoseeva \cite{Mukoseeva2023} for $1<p<n$: there is $c=c(n,p)>0$ such that
\begin{equation}\label{eq:quantisocap}
\operatorname{cap}_p(K)\geq\big(1+c\,\mathcal{F}(K)^2\big)\operatorname{cap}_p(K^\star),
\end{equation}
and the exponent $2$ cannot be improved. In \cite{DPMM2021,Mukoseeva2023} the inequality is stated for open sets; see Section \ref{sec:functions} for compact sets.

The affine counterpart of \eqref{eq:Sobolev} was discovered by Zhang \cite{Zhang1999} for $p=1$ and by Lutwak, Yang and Zhang \cite{LYZ2002} for $1<p<n$. For $f\in W^{1,p}(\mathbb{R}^n)$ consider the affine energy
\begin{equation}\label{eq:Ep}
\mathcal{E}_p(f)=c_{n,p}\Big(\int_{S^{n-1}}\|\nabla_\xi f\|_{L^p(\mathbb{R}^n)}^{-n}\,d\xi\Big)^{-1/n},
\end{equation}
where $\nabla_\xi f=\langle\nabla f,\xi\rangle$ and the constant $c_{n,p}$ is chosen so that $\mathcal{E}_p(f)=\|\nabla f\|_{L^p(\mathbb{R}^n)}$ whenever $f$ is radial (see Section \ref{sec:energy}). The affine Sobolev inequality of \cite{LYZ2002} reads
\begin{equation}\label{eq:affineSobolev}
S_{n,p}\,\|f\|_{L^{p^*}(\mathbb{R}^n)}\leq\mathcal{E}_p(f)\leq\|\nabla f\|_{L^p(\mathbb{R}^n)},
\end{equation}
where the right inequality follows from H\"older's inequality and is strict unless the directional energies $\|\nabla_\xi f\|_{L^p}$, $\xi\in S^{n-1}$, all coincide. The affine energy is invariant under volume-preserving affine transformations of $\mathbb{R}^n$, whereas $\|\nabla f\|_{L^p}$ is only invariant under rigid motions; thus the left inequality in \eqref{eq:affineSobolev} is stronger than \eqref{eq:Sobolev}, with the same optimal constant. One of the main tools of the theory is the affine P\'olya--Szeg\H{o} principle
\begin{equation}\label{e:affinePZ}
    \mathcal{E}_p(f^\star)\leq\mathcal{E}_p(f),
\end{equation}
where $f^\star$ is the symmetric decreasing rearrangement of $|f|$, due to Cianchi, Lutwak, Yang and Zhang \cite{CLYZ2009} (see also \cite{HSX2012}); its equality cases were determined by Wang \cite{Wang2013} and, for a larger family of affine energies, by Nguyen \cite{Nguyen2016}, and a quantitative form was obtained in \cite{Wang2013}. A different approach to sharp affine Sobolev-type inequalities, through the $L_p$ Busemann--Petty centroid inequality, was developed by Haddad, Jim\'enez and Montenegro \cite{HJM2016}.

On bounded domains, the same authors proved in \cite{HJM2021} an affine Poincar\'e inequality showing that $\mathcal{E}_p$ controls the $L^p$ norm of functions in $W^{1,p}_0(\Omega)$. This leads to the affine Poincar\'e--Sobolev constants
\begin{equation}\label{eq:affinePS}
\Lambda^{\mathcal{A}}_{p,q}(\Omega)=\inf\Big\{\mathcal{E}_p(v)^p\ :\ v\in W^{1,p}_0(\Omega),\ \|v\|_{L^q(\Omega)}=1\Big\},\qquad 1\leq q<p^*,
\end{equation}
and to the affine torsional rigidity $T^{\mathcal{A}}_p(\Omega)=\Lambda^{\mathcal{A}}_{p,1}(\Omega)^{-1/(p-1)}$. By \eqref{eq:affineSobolev}, $\Lambda^{\mathcal{A}}_{p,q}(\Omega)\leq\Lambda_{p,q}(\Omega)$ and $T^{\mathcal{A}}_p(\Omega)\geq T_p(\Omega)$. Hence every lower bound for $\Lambda^{\mathcal{A}}_{p,q}$, and every upper bound for $T^{\mathcal{A}}_p$, in terms of the volume improves the corresponding Euclidean inequality. Both quantities are invariant under volume-preserving affine maps, so that ellipsoids play the role of balls.

The first variation of \eqref{eq:affinePS} at $q=p$ led Haddad, Jim\'enez and Montenegro \cite{HJM2021} to the affine $p$-Laplace operator. For $u\in W^{1,p}_0(\Omega)\setminus\{0\}$, and for $u\in D^{1,p}(\mathbb{R}^n)\setminus\{0\}$ when $1<p<n$, it is defined by
\begin{equation}\label{eq:affineLaplacian}
\Delta^{\mathcal{A}}_pu=-\operatorname{div}\big(H_u(\nabla u)^{p-1}\nabla H_u(\nabla u)\big),
\end{equation}
where
\[
H_u(\zeta)^p=c_{n,p}^{-n}\,\mathcal{E}_p(u)^{n+p}\int_{S^{n-1}}\|\nabla_\xi u\|_{L^p}^{-n-p}\,|\langle\xi,\zeta\rangle|^p\,d\xi,\qquad\zeta\in\mathbb{R}^n.
\]
For each $u$, $H_u$ is a norm on $\mathbb{R}^n$, namely the support function of a convex body built from $u$ (see Lemma \ref{lem:frozen}); hence $\Delta^{\mathcal{A}}_p$ is a nonlocal quasilinear operator which, at each $u$, acts as the anisotropic $p$-Laplacian of a norm determined by $u$ itself. It satisfies $\Delta^{\mathcal{A}}_p(u\circ A)=(\Delta^{\mathcal{A}}_pu)\circ A$ for $A\in\mathrm{SL}(n)$, and $\Delta^{\mathcal{A}}_pu=-\Delta_pu$ for radial $u$; for $p=2$ it is the affine Laplacian of Schindler and Tintarev \cite{SchindlerTintarev2018}. In \cite{HJM2021}, among other related results, it is proved that $\Lambda^{\mathcal{A}}_{p,p}(\Omega)$ is attained, that its minimizers are precisely the first eigenfunctions of $\Delta^{\mathcal{A}}_p$, which are bounded, of class $C^{1,\alpha}$ and of constant sign when $\Omega$ is connected, and that the affine Faber--Krahn inequality $\Lambda^{\mathcal{A}}_{p,p}(\Omega)\geq\Lambda_{p,p}(\Omega^\star)$ holds, with equality only for ellipsoids. Leite and Montenegro have extensively studied both the affine energy and $\Delta^{\mathcal{A}}_p$ \cite{LM2022,LM2024ACV}. Of particular relevance to this work, they proved that $\mathcal{E}_p^p$ is of class $C^1$ away from the origin and obtained existence results for equations driven by $\Delta^{\mathcal{A}}_p$ \cite{LM2024CV}. Langharst and Roysdon \cite{LR2025} extended the eigenvalue theory, including the affine Faber--Krahn inequality, to the so-called $m$-th order and asymmetric affine energies.

Our first main result concerns the isoperimetric problems for the whole family $\Lambda^{\mathcal{A}}_{p,q}$.

\begin{theorem}\label{thm:iso}
Let $p>1$, $1\leq q<p^*$, and let $\Omega\subset\mathbb{R}^n$ be a bounded open set. Then
\begin{equation}\label{eq:iso}
\Lambda^{\mathcal{A}}_{p,q}(\Omega)\geq\Lambda_{p,q}(\Omega^\star),
\end{equation}
with equality if and only if $\Omega$ is an ellipsoid, up to a set of zero \emph{Sobolev} $p$-capacity.
\end{theorem}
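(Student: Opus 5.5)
The inequality itself should follow from the affine P\'olya--Szeg\H{o} principle \eqref{e:affinePZ}, exactly as in the Euclidean case. Take $v\in W^{1,p}_0(\Omega)$ with $\|v\|_{L^q(\Omega)}=1$. Its rearrangement $v^\star$ lies in $W^{1,p}_0(\Omega^\star)$, satisfies $\|v^\star\|_{L^q}=1$, and has
\[
\mathcal{E}_p(v)^p\geq\mathcal{E}_p(v^\star)^p=\|\nabla v^\star\|_{L^p}^p\geq\Lambda_{p,q}(\Omega^\star),
\]
because $v^\star$ is radial and the normalisation of $c_{n,p}$ gives $\mathcal{E}_p=\|\nabla\cdot\|_{L^p}$ on radial functions. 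Taking the infimum over $v$ gives \eqref{eq:iso}. Both sides are $\mathrm{SL}(n)$- and translation-invariant, so equality for ellipsoids follows by applying a map $A\in\mathrm{SL}(n)$ that sends the ellipsoid to the ball $\Omega^\star$. (The abstract suggests the paper obtains the inequality through the $L_p$ Busemann--Petty centroid inequality, following \cite{HJM2016}; either route gives \eqref{eq:iso}.)

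For the equality case I would first prove that $\Lambda^{\mathcal{A}}_{p,q}(\Omega)$ is attained. A minimizing sequence $v_k$ has $\mathcal{E}_p(v_k)$ bounded. By the affine Poincar\'e inequality of \cite{HJM2021} and the fact that $\mathcal{E}_p$ is bounded below on the $\mathrm{SL}(n)$-orbit, we may replace $v_k$ by $v_k\circ A_k$ with $A_k\in\mathrm{SL}(n)$. Choosing $A_k$ so that the directional energies $\|\nabla_\xi v_k\|_{L^p}$ are balanced (the body $\Pi_pv_k$ becomes isotropic) makes $\|\nabla v_k\|_{L^p}$ comparable to $\mathcal{E}_p(v_k)$. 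The difficulty is that $\Omega$ moves to $A_k^{-1}\Omega$. For the equality case, however, I would run this argument with $\Omega$ fixed and assume $\Lambda^{\mathcal{A}}_{p,q}(\Omega)=\Lambda_{p,q}(\Omega^\star)$. If the $A_k$ degenerated, the domains $A_k^{-1}\Omega$ would become thin, which contradicts the Euclidean $\Lambda_{p,q}(A_k^{-1}\Omega)$ being bounded along the sequence (a thin slab has large Poincar\'e constant). Hence the $A_k$ stay in a compact set, we may take $A_k\to A$, and $W^{1,p}$ compactness together with $q<p^*$ and lower semicontinuity of $\mathcal{E}_p$ produces a minimizer $u\ge 0$ (replace $u$ by $|u|$).

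With a minimizer $u$ in hand, every inequality in the chain above is an equality. In particular $\mathcal{E}_p(u)=\mathcal{E}_p(u^\star)$ and $\|\nabla u^\star\|_p^p=\Lambda_{p,q}(\Omega^\star)$. The second equality forces $u^\star$ to be the Euclidean extremal on $\Omega^\star$, which is radially strictly decreasing and so has no flat zones, i.e.\ $|\{\nabla u^\star=0\}\cap\{0<u^\star\}|=0$. Under this condition, the equality cases of the affine P\'olya--Szeg\H{o} inequality (Wang \cite{Wang2013}, Nguyen \cite{Nguyen2016}) give $u=u^\star\circ A+x_0$ for some $A\in\mathrm{SL}(n)$ and $x_0\in\mathbb{R}^n$. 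Therefore $\{u>0\}$ is an ellipsoid $E$ with $|E|=|\Omega|$.

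Since $u\in W^{1,p}_0(\Omega)$, we have $E\subset\Omega$ up to capacity zero. Since $|E|=|\Omega|$, the difference $\Omega\setminus\overline E$ is an open set of measure zero and hence empty. So $\Omega\subset\overline E$ with $E\subset\Omega$, and $\Omega\setminus E\subset\partial E$. It remains to show that $\Omega\setminus E$ has zero $p$-capacity. If it had positive capacity, then $W^{1,p}_0(E)\subsetneq W^{1,p}_0(\Omega)$, and I would use the strict monotonicity of $\Lambda_{p,q}$ under such enlargements (the Euclidean minimizer on $\Omega^\star$ is $C^1$ up to the boundary, and Hopf-type arguments apply after transporting to $E$) to contradict $u$ being extremal. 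The step I expect to be the main obstacle is the existence of a minimizer, specifically controlling the normalising matrices $A_k$ without compactness of the affine group.
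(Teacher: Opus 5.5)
Your proposal is correct in substance but follows a different route from the paper. You obtain \eqref{eq:iso} from the affine P\'olya--Szeg\H{o} principle \eqref{e:affinePZ}, and the equality case from its equality characterization (Wang, Nguyen), using the non-degeneracy of the radial Euclidean extremal.

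The paper never invokes \eqref{e:affinePZ}. It writes $\mathcal{E}_p(v)^p=\int H_v(\nabla v)^p\,dx$ with $H_v=h(\Gamma_pL_v,\cdot)$ (Lemma \ref{lem:frozen}). It then applies the anisotropic convex-symmetrization bound of Lemma \ref{lem:Wulff}(iv) and the $L_p$ Busemann--Petty centroid inequality, which gives the chain \eqref{eq:isochain} with the explicit factor $(|\Gamma_pL_v|/|L_v|)^{p/n}$. In the equality case this factor forces $\Gamma_pL_u=L_u=A_0B^n$. After the substitution $w=u\circ A_0$ the problem is Euclidean, so only the classical Brothers--Ziemer theorem is needed, with non-degeneracy supplied by Lemma \ref{lem:radial}. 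Your assertion that the radial extremal is strictly decreasing is exactly this lemma; for $q\neq p$ it needs the radial Euler--Lagrange argument given there.

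Your route is shorter if the affine P\'olya--Szeg\H{o} principle and its equality cases are taken as black boxes (Nguyen's proof of these rests on the same Busemann--Petty mechanism). The paper chooses its route because the factor $|\Gamma_pL_u|/|L_u|$ is what Theorem \ref{thm:BPstab} converts into the quantitative bounds of Theorem \ref{thm:stab}. \eqref{e:affinePZ} alone does not provide that.

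Two steps in your write-up are harder than they need to be.

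\emph{Existence is not the main obstacle.} For fixed bounded $\Omega$, the affine Poincar\'e inequality $\mathcal{E}_p(g)\geq C\|g\|_p^{(n-1)/n}\|\nabla g\|_p^{1/n}$ of \cite{HJM2021} (Proposition \ref{prop:energy}(ii)) bounds a minimizing sequence in $W^{1,p}_0(\Omega)$ directly. You only need to add a lower bound on $\|v_k\|_p$: H\"older if $q\leq p$; interpolation with the affine Sobolev inequality if $q>p$, at a smaller exponent via Lemma \ref{lem:lower} when $p\geq n$. No matrices $A_k$ and no equality hypothesis are needed. Your $A_k$ argument also works, and in fact for every $\Omega$.

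\emph{The step you leave for last is vacuous, because you have the set difference reversed.} The definition asks for $\Omega\subseteq E$ and $C_p(E\setminus\Omega)=0$.
\begin{itemize}
\item The capacity condition is what you already derived from $\tilde u>0$ q.e.\ on $E$ and $\tilde u=0$ q.e.\ off $\Omega$. For $p>n$, check the center of $E$ separately, where $\tilde u$ is continuous and positive. Note that this gives $E\subseteq\Omega$ only up to a capacity-null set, not literally.
\item The inclusion follows from $\Omega\subseteq\overline E$ and the openness of $\Omega$, since $\operatorname{int}\overline E=E$.
\end{itemize}
So $\Omega\setminus E=\emptyset$, and no strict-monotonicity or Hopf argument is needed.
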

Here we say that $\Omega$ is an ellipsoid up to a set of zero Sobolev $p$-capacity $C_p$ (see Section \ref{sec:functions}) if $\Omega\subseteq E$ and $C_p(E\setminus\Omega)=0$ for some open ellipsoid $E$; such modifications do not change $W^{1,p}_0(\Omega)$ (see Section \ref{sec:functions}). For $q=p$, Theorem \ref{thm:iso} is the affine Faber--Krahn inequality \cite[Theorem 5]{HJM2021}. The new part is the characterization of the equality cases for $q\neq p$. The critical exponent is excluded: for $1<p<n$, $\Lambda^{\mathcal{A}}_{p,p^*}(\Omega)=S_{n,p}^p$ for every $\Omega$, and this value is not attained \cite{LM2024ACV}. For $q=1$, Theorem \ref{thm:iso} yields the following affine Saint-Venant inequality.

\begin{corollary}\label{cor:SV}
Let $p>1$ and let $\Omega\subset\mathbb{R}^n$ be a bounded open set. Then
\[
T_p(\Omega)\leq T^{\mathcal{A}}_p(\Omega)\leq T_p(\Omega^\star)=\frac{(p-1)\,\omega_n^{-\frac{p}{n(p-1)}}}{n^{\frac{1}{p-1}}\,\big(n(p-1)+p\big)}\,|\Omega|^{\frac{n(p-1)+p}{n(p-1)}},
\]
where $\omega_n=|B^n|$. Equality holds in the second inequality if and only if $\Omega$ is an ellipsoid, up to a set of zero Sobolev $p$-capacity.
\end{corollary}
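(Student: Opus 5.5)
The plan is to obtain Corollary \ref{cor:SV} from Theorem \ref{thm:iso} with $q=1$, combined with the comparison $\Lambda^{\mathcal{A}}_{p,1}\leq\Lambda_{p,1}$ and the explicit formula for the Euclidean torsion function of a ball.

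\textbf{Step 1: the first inequality.} By the right inequality in \eqref{eq:affineSobolev}, $\mathcal{E}_p(v)^p\leq\|\nabla v\|_{L^p}^p$ for every $v\in W^{1,p}_0(\Omega)$, extended by zero. Taking infima over $\|v\|_{L^1(\Omega)}=1$ gives $\Lambda^{\mathcal{A}}_{p,1}(\Omega)\leq\Lambda_{p,1}(\Omega)$. Since $t\mapsto t^{-1/(p-1)}$ is decreasing, this yields $T_p(\Omega)\leq T^{\mathcal{A}}_p(\Omega)$.

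\textbf{Step 2: the second inequality and its equality case.} Theorem \ref{thm:iso} with $q=1$ gives $\Lambda^{\mathcal{A}}_{p,1}(\Omega)\geq\Lambda_{p,1}(\Omega^\star)$, with equality exactly for ellipsoids up to sets of zero Sobolev $p$-capacity. Raising both sides to the power $-1/(p-1)$ reverses the inequality and preserves the equality cases, so $T^{\mathcal{A}}_p(\Omega)\leq T_p(\Omega^\star)$ with the same characterization of equality.

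\textbf{Step 3: the explicit value.} Let $R$ be the radius of $\Omega^\star$, so that $\omega_nR^n=|\Omega|$. We use $T_p(\Omega^\star)=\int u$, where $u$ solves $-\Delta_pu=1$ in $\Omega^\star$ with zero boundary values (see \cite[Proposition 2.2]{Brasco2014}). For a radial solution $u(r)$, integrating the equation gives $|u'|^{p-2}u'=-r/n$. Hence
\[
u(r)=\frac{p-1}{p}\,n^{-\frac{1}{p-1}}\Big(R^{\frac{p}{p-1}}-r^{\frac{p}{p-1}}\Big).
\]
Integrating in polar coordinates, $\int_{B_R}\big(R^{p'}-r^{p'}\big)\,dx=n\omega_n R^{n+p'}\big(\tfrac1n-\tfrac1{n+p'}\big)=\omega_n R^{n+p'}\,\tfrac{p'}{n+p'}$, where $p'=p/(p-1)$. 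Multiplying by the prefactor gives
\[
T_p(\Omega^\star)=\frac{(p-1)\,\omega_nR^{n+p'}}{n^{\frac{1}{p-1}}\big(n(p-1)+p\big)}.
\]
Substituting $R=(|\Omega|/\omega_n)^{1/n}$, we get $\omega_nR^{n+p'}=\omega_n^{-p'/n}|\Omega|^{(n+p')/n}$. Since $(n+p')/n=\frac{n(p-1)+p}{n(p-1)}$ and $p'/n=\frac{p}{n(p-1)}$, this is the stated formula.

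There is no real obstacle, since the substantive content (the equality cases) is supplied by Theorem \ref{thm:iso}. The only points needing care are the direction reversal under the map $t\mapsto t^{-1/(p-1)}$ and the bookkeeping of the exponents in Step 3.
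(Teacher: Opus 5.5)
Your proposal is correct and follows the paper's proof: the first inequality is the comparison $\Lambda^{\mathcal{A}}_{p,1}\leq\Lambda_{p,1}$ from \eqref{eq:invariance}, and the second inequality with its equality cases is Theorem~\ref{thm:iso} at $q=1$. The explicit value comes from the torsion function of the ball, which the paper quotes as \eqref{eq:TBall} and you rederive with the same constants.
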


With the definitions of the affine Poincar\'e--Sobolev constant and the affine torsional rigidity in hand, it is natural to wonder about an \emph{affine} analogue of \eqref{eq:KJEuclid}. Our next result provides such an inequality.

\begin{theorem}\label{thm:KJ}
Let $p>1$, $1<q<p^*$, and let $\Omega\subset\mathbb{R}^n$ be a bounded open set. Then
\begin{equation}\label{eq:KJ}
T^{\mathcal{A}}_p(\Omega)^{(p-1)\sigma}\,\Lambda^{\mathcal{A}}_{p,q}(\Omega)\geq T_p(B^n)^{(p-1)\sigma}\,\Lambda_{p,q}(B^n),
\end{equation}
with $\sigma$ as in \eqref{eq:KJEuclid}. Equality holds if and only if $\Omega$ is an ellipsoid, up to a set of zero Sobolev $p$-capacity.
\end{theorem}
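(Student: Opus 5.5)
We reduce the inequality to a Euclidean problem on a ball, in the spirit of the Haddad--Jim\'enez--Montenegro approach. The key device is the \emph{frozen norm} from Lemma \ref{lem:frozen}.

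Take a nonnegative minimizer $w$ for $T^{\mathcal{A}}_p(\Omega)$, i.e.\ for $\Lambda^{\mathcal{A}}_{p,1}(\Omega)$. Its existence follows as in the proof of Theorem \ref{thm:iso}. Its norm $H_w$ is the support function of an $L_p$ centroid-type body $\Gamma_w$, and it satisfies
\[
\int_\Omega H_w(\nabla v)^p\,dx\geq \mathcal{E}_p(v)^p\quad\text{for all }v,
\]
with equality at $v=w$. This follows from H\"older's inequality in the spherical integral defining $\mathcal{E}_p$. After an $\mathrm{SL}(n)$ change of variables we normalize $\Gamma_w$ so that the anisotropic quantities become comparable to Euclidean ones. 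Both sides of \eqref{eq:KJ} are invariant under volume-preserving affine maps and under dilations, so this normalization is harmless.

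The main step is a Kohler-Jobin rearrangement in the anisotropic setting, adapted from Brasco \cite{Brasco2014}. Let $u$ be a minimizer for $\Lambda^{\mathcal{A}}_{p,q}(\Omega)$; by the frozen inequality, $u$ also minimizes the anisotropic Rayleigh quotient for the norm $H_u$. We would rather rearrange $u$ with respect to the torsion function. The plan is as follows.
\begin{enumerate}
\item Freeze the norm $H=H_u$. Then $\Lambda^{\mathcal{A}}_{p,q}(\Omega)=\int H(\nabla u)^p$, while $T^{\mathcal{A}}_p(\Omega)\geq T_{p,H}(\Omega)$, the anisotropic torsional rigidity for $H$. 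The latter inequality holds because $\int H(\nabla v)^p\geq\mathcal{E}_p(v)^p$ for every $v$, which makes the affine quotient smaller and hence the rigidity larger.
\item The exponent is positive, so it suffices to show
\[
T_{p,H}(\Omega)^{(p-1)\sigma}\int H(\nabla u)^p\geq T_p(B^n)^{(p-1)\sigma}\Lambda_{p,q}(B^n)\,\|u\|_q^p .
\]
This is Brasco's anisotropic Kohler-Jobin inequality with Wulff shapes of $H$ in place of balls. Brasco's proof uses only the level-set structure of the torsion function and the anisotropic isoperimetric (Wulff) inequality, so the extension is routine. The extremal Wulff shape $W_H$ gives
\[
T_{p,H}(W_H)^{(p-1)\sigma}\Lambda_{p,q,H}(W_H)=\big(\text{volume normalization of }\Gamma_u\big)\cdot T_p(B^n)^{(p-1)\sigma}\Lambda_{p,q}(B^n).
\]
\item Control the normalization constant by the $L_p$ Busemann--Petty centroid inequality applied to $\Gamma_u$. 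This is exactly how $\mathcal{E}_p$ relates to Euclidean energies in the approach of \cite{HJM2016}, so the constant is $\geq 1$, with equality only if $\Gamma_u$ is an origin-symmetric ellipsoid.
\end{enumerate}
The main obstacle is step 3, checking the bookkeeping of powers: the dilation exponent $\sigma$ must match the homogeneity in which $|\Gamma_u|$ enters both $T_{p,H}$ and $\Lambda_{p,q,H}$. If it does not match directly, we would instead apply the affine P\'olya--Szeg\H{o} principle \eqref{e:affinePZ} to the Kohler-Jobin-rearranged function. That function is radial, so $\mathcal{E}_p$ and the Euclidean energy coincide on it.

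For the equality cases, equality in \eqref{eq:KJ} forces equality in the Busemann--Petty step, so $\Gamma_u$ is an ellipsoid. After an $\mathrm{SL}(n)$ map we may take it to be a ball, so that $H$ is Euclidean. Equality in Brasco's step then forces the torsion function to have balls as level sets, and Brasco's characterization gives that $\Omega$ is a ball up to a set of zero $p$-capacity. The capacity caveat is handled exactly as in Theorem \ref{thm:iso}. Conversely, ellipsoids attain equality by affine invariance and \eqref{eq:KJEuclid}.
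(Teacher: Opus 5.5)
Your route is the paper's own proof. With $Q=\Gamma_pL_u$ for a minimizer $u$ of $\Lambda^{\mathcal{A}}_{p,q}(\Omega)$, one has $\Lambda^{\mathcal{A}}_{p,q}(\Omega)\geq\Lambda^{Q}_{p,q}(\Omega)$ and $T^{\mathcal{A}}_p(\Omega)\geq T^{Q}_p(\Omega)$, and one then applies Brasco's anisotropic inequality and the $L_p$ Busemann--Petty inequality. The torsion minimizer $w$ and the $\mathrm{SL}(n)$ normalization play no role.

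The bookkeeping you flag as the main obstacle works out, so the fallback through the affine P\'olya--Szeg\H{o} principle is not needed. Convex symmetrization reduces the problems on $Q$ to functions of $\|x\|_Q$. This gives $T^{Q}_p(Q)=\frac{|Q|}{\omega_n}T_p(B^n)$ and $\Lambda^{Q}_{p,q}(Q)=\big(\frac{|Q|}{\omega_n}\big)^{1-p/q}\Lambda_{p,q}(B^n)$. Since $|L_u|=\omega_n$, it follows that
\[
T^{Q}_p(Q)^{(p-1)\sigma}\Lambda^{Q}_{p,q}(Q)=\Big(\frac{|\Gamma_pL_u|}{|L_u|}\Big)^{\theta}T_p(B^n)^{(p-1)\sigma}\Lambda_{p,q}(B^n),\qquad\theta=(p-1)\sigma+1-\frac pq=\frac{p^2(q-1)}{q\,(n(p-1)+p)}.
\]
Here $\theta>0$ exactly because $q>1$, and this is what lets equality force $|\Gamma_pL_u|=|L_u|$. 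You also do not need to re-derive the anisotropic Kohler-Jobin inequality: it is \cite[Theorem 6.1]{Brasco2014}. Its hypotheses, that the unit ball $(\Gamma_pL_u)^\circ$ is strictly convex and of class $C^1$, hold by Lemma \ref{lem:admissible}.

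The one genuine omission is in the converse. Affine invariance and dilation invariance reduce you to $\Omega=B^n$. There, however, \eqref{eq:domination} only gives $\Lambda^{\mathcal{A}}_{p,q}(B^n)\leq\Lambda_{p,q}(B^n)$ and $T^{\mathcal{A}}_p(B^n)\geq T_p(B^n)$. The second goes the wrong way, because $T^{\mathcal{A}}_p$ enters \eqref{eq:KJ} with a positive exponent, and \eqref{eq:KJEuclid} says nothing about the affine quantities. You need $T^{\mathcal{A}}_p(B^n)\leq T_p(B^n)$, which is the affine Saint-Venant inequality (Corollary \ref{cor:SV}, the case $q=1$ of Theorem \ref{thm:iso}); this is what the paper uses.

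Your forward equality argument is fine, and slightly more economical than the paper's. Once $\Gamma_pL_u=A_0B^n$ with $A_0\in\mathrm{SL}(n)$, Lemma \ref{lem:Wulff}(iii) turns equality in the anisotropic inequality into equality in \eqref{eq:KJEuclid} for $A_0^{-1}\Omega$. So only Brasco's Euclidean equality case is required.
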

Since $T^{\mathcal{A}}_p\geq T_p$ and $\Lambda^{\mathcal{A}}_{p,q}\leq\Lambda_{p,q}$, inequality \eqref{eq:KJ} is not, in general, a strengthening of \eqref{eq:KJEuclid}. Together with Corollary \ref{cor:SV}, Theorem \ref{thm:KJ} yields \eqref{eq:iso} for $q>1$ with an improved constant (see Remark \ref{rem:hierarchy}).

On the capacitary side, Xiao \cite{Xiao2016} introduced the capacity associated with $\mathcal{E}_p$. In our normalization, the affine $p$-capacity of a compact set $K$ is
\begin{equation}\label{eq:affcap}
\operatorname{cap}^{\mathcal{A}}_p(K)=\inf\Big\{\mathcal{E}_p(u)^p\ :\ u\in C^\infty_c(\mathbb{R}^n),\ u\geq1\ \text{on }K\Big\}.
\end{equation}
The affine capacity is invariant under volume-preserving affine maps and satisfies $\operatorname{cap}^{\mathcal{A}}_p\leq\operatorname{cap}_p$. Xiao proved the affine isocapacitary inequality
\begin{equation}\label{eq:affisocap}
\operatorname{cap}^{\mathcal{A}}_p(K)\geq\operatorname{cap}_p(K^\star),
\end{equation}
which strengthens \eqref{eq:isocap}, and showed that ellipsoids attain equality \cite[Theorems 3.2 and 3.5]{Xiao2016}; in particular $\operatorname{cap}^{\mathcal{A}}_p$ and $\operatorname{cap}_p$ coincide on balls. 

Further properties were obtained in \cite{Xiao2017}, and relative and BV versions of the affine capacity were studied in \cite{XiaoZhang2016} and \cite{WangXiao2020}. For other generalizations, the interested reader is referred to \cite{HongYe2018,ZhouYe2025}. Wang and Xiao \cite[Theorem 2.3]{WangXiao2020} computed the first variation of the affine capacity, assuming that a minimizer exists among compactly supported functions. Nevertheless, several basic questions have remained open in the literature: whether the infimum in \eqref{eq:affcap} is attained, which condition a minimizer satisfies on $K$, which sets attain equality in \eqref{eq:affisocap}, and whether \eqref{eq:affisocap} is stable. Our second main contribution answers the first three questions. 

\begin{theorem}\label{thm:potential}
Let $1<p<n$ and let $K\subset\mathbb{R}^n$ be a compact set with $|K|>0$. There exists $u\in D^{1,p}(\mathbb{R}^n)$ with $\tilde u\geq1$ quasi-everywhere on $K$ and $\mathcal{E}_p(u)^p=\operatorname{cap}^{\mathcal{A}}_p(K)$. Every such minimizer satisfies $0\leq u\leq1$ and $\tilde u=1$ quasi-everywhere on $K$, and it is the unique capacitary potential of $K$ for the anisotropic energy $v\mapsto\int_{\mathbb{R}^n}H_u(\nabla v)^p\,dx$. Moreover, $\Delta^{\mathcal{A}}_pu=0$ in $\mathbb{R}^n\setminus K$, and $\mu_K=\Delta^{\mathcal{A}}_pu$ is a nonnegative Radon measure supported on $\partial K$ with $\mu_K(\mathbb{R}^n)=\operatorname{cap}^{\mathcal{A}}_p(K)$.
\end{theorem}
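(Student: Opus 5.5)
The plan is a direct method in $D^{1,p}(\mathbb{R}^n)$ modulo $\mathrm{SL}(n)$, followed by a \emph{frozen norm} argument that turns the nonlocal problem into a classical anisotropic capacity problem.

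\textbf{Existence.} Take a minimizing sequence $u_k\in C^\infty_c(\mathbb{R}^n)$ with $u_k\geq1$ on $K$ and $\mathcal{E}_p(u_k)^p\to\operatorname{cap}^{\mathcal{A}}_p(K)$. Truncating to $\min(\max(u_k,0),1)$ does not increase any directional energy $\|\nabla_\xi u_k\|_{L^p}$, hence does not increase $\mathcal{E}_p$. The difficulty is that $\mathcal{E}_p(u_k)$ alone does not bound $\|\nabla u_k\|_{L^p}$, because of $\mathrm{SL}(n)$ degeneration. To control this I would use $|K|>0$.

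By the affine Sobolev inequality \eqref{eq:affineSobolev}, $\|u_k\|_{L^{p^*}}$ is bounded. Since $u_k\geq1$ on $K$, we also have $\|u_k\|_{L^{p^*}}\geq|K|^{1/p^*}$. The key point is to show that the convex body $\Pi_k$, whose support function is $\xi\mapsto\|\nabla_\xi u_k\|_{L^p}$ (a projection-type body, cf. Lemma \ref{lem:frozen}), stays in a compact family: its inradius is bounded below and its circumradius is bounded above.

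\begin{itemize}
\item \emph{Lower bound on the inradius.} If $\|\nabla_\xi u_k\|_{L^p}\to0$ for some direction $\xi$, then $u_k$ is almost constant along $\xi$. A one-dimensional Poincaré/Sobolev argument on lines parallel to $\xi$, using compact support and $u_k\geq1$ on a set of positive measure, gives a lower bound $\|\nabla_\xi u_k\|_{L^p}\ge c(K,\|u_k\|_{L^{p^*}})>0$.
\item \emph{Upper bound on the circumradius.} Since $|\Pi_k|$ is comparable to $\mathcal{E}_p(u_k)^n$ up to constants, and $|\Pi_k|$ is bounded, the lower bound on all widths bounds the circumradius.
\end{itemize}

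Hence $\|\nabla u_k\|_{L^p}$ is bounded. A subsequence converges weakly in $D^{1,p}$ and quasi-everywhere after passing to precise representatives, so $\tilde u\ge1$ q.e. on $K$. Weak lower semicontinuity of $\mathcal{E}_p$ follows from lower semicontinuity of each $\|\nabla_\xi u\|_{L^p}$ together with Fatou on $S^{n-1}$ (the exponent $-n$ reverses the direction, so one has to check it carefully). Conversely, any $D^{1,p}$ competitor with $\tilde v\ge1$ q.e. on $K$ can be approximated by admissible smooth functions using standard capacity theory, so the infimum over this larger class is still $\operatorname{cap}^{\mathcal{A}}_p(K)$. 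I expect this compactness step to be the main obstacle.

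\textbf{Frozen norm and uniqueness.} Let $u$ be any minimizer. By truncation, $\min(\max(u,0),1)$ is also admissible with energy no larger. The strict-decrease cases then give $0\le u\le1$ and $\tilde u=1$ q.e. on $K$.

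For the variational characterization I would use the inequality from Lemma \ref{lem:frozen}, in the spirit of Haddad--Jiménez--Montenegro:
\[
\mathcal{E}_p(v)^p\le\int H_u(\nabla v)^p\,dx\quad\text{for all } v,\qquad \text{with equality at } v=u.
\]
This is a Hölder/Jensen inequality in $\xi$ with weights $\|\nabla_\xi u\|^{-n-p}$. Now let $v$ be any admissible competitor. Then
\[
\int H_u(\nabla u)^p=\mathcal{E}_p(u)^p\le\mathcal{E}_p(v)^p\le\int H_u(\nabla v)^p,
\]
so $u$ minimizes the anisotropic energy $v\mapsto\int H_u(\nabla v)^p\,dx$. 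Because $H_u^p$ is strictly convex, the anisotropic capacitary potential is unique.

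\textbf{Equation and measure.} For $\varphi\in C^\infty_c(\mathbb{R}^n\setminus K)$, the first variation of $\int H_u(\nabla\cdot)^p$ at $u$ vanishes. This gives $\Delta^{\mathcal{A}}_pu=0$ there; alternatively one can use the $C^1$ property of $\mathcal{E}_p^p$ from \cite{LM2024CV}, whose derivative is $p\int H_u(\nabla u)^{p-1}\nabla H_u(\nabla u)\cdot\nabla\varphi$.

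For $\varphi\ge0$ with compact support, $u+t\varphi$ is admissible for $t>0$, so the one-sided derivative is nonnegative. Thus $\Delta^{\mathcal{A}}_pu\geq0$ as a distribution, hence it is a nonnegative Radon measure $\mu_K$. It vanishes in $\mathbb{R}^n\setminus K$. It also vanishes in $\operatorname{int}K$, since $u\equiv1$ there, so $\nabla u=0$ a.e. Therefore $\mu_K$ is supported on $\partial K$.

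For the total mass, test against cutoffs $\psi_R$ equal to $1$ near $K$. The identity $\int H_u(\nabla u)^{p-1}\nabla H_u(\nabla u)\cdot\nabla u=\int H_u(\nabla u)^p$ holds by $1$-homogeneity. Testing with $u$ itself, which equals $1$ on $\operatorname{supp}\mu_K$ q.e., and letting $R\to\infty$ using $u\in D^{1,p}$, gives $\mu_K(\mathbb{R}^n)=\int H_u(\nabla u)^p=\operatorname{cap}^{\mathcal{A}}_p(K)$.
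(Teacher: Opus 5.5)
Your proposal is correct and follows the paper's proof essentially step by step. The steps match: truncation plus the affine Sobolev bound; a slicing lower bound on $\|\nabla_\xi u_k\|_p$ (the paper's Lemma \ref{lem:slicing}, where the $L^{p^*}$ bound controls $|\{u_k\geq\frac12\}|$ and Jensen's inequality over the lines meeting $K$ makes it uniform in $\xi$); a volume argument for the upper bound; reverse Fatou dominated by that lower bound; the frozen norm $H_u$; and the first variation. For the volume step, the paper works on the polar body $\Pi^*_pu_k$, which avoids the Blaschke--Santal\'o inequality implicit in your ``comparable'' claim.

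Two justifications should be repaired. First, weak convergence in $D^{1,p}$ does not give q.e.\ convergence of a subsequence; use instead that the convex closed class $\mathcal{W}(K)$ is weakly closed. Second, testing with $u$ for the total mass needs the standard fact that $\mu_K$ charges no set of zero capacity; the paper bypasses this by approximating $\psi-u$ in $D^{1,p}$ by functions in $C^\infty_c(\mathbb{R}^n\setminus K)$.
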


Here $\tilde u$ denotes the quasicontinuous representative of $u$, quasi-everywhere means outside a set of zero Sobolev p-capacity, and the equations are understood in the sense of distributions. As one of the consequences of Theorem \ref{thm:potential}, we can derive the corresponding rigidity statement for \eqref{eq:affisocap}.

\begin{theorem}\label{thm:isocap}
Let $1<p<n$ and let $K\subset\mathbb{R}^n$ be a compact set with $|K|>0$. Then
\[
\operatorname{cap}_p(K)\geq\operatorname{cap}^{\mathcal{A}}_p(K)\geq\operatorname{cap}_p(K^\star),
\]
and equality holds in the second inequality  if and only if $K$ is an ellipsoid up to a set of zero Sobolev $p$-capacity, that is, $E\subseteq K$ and $C_p(K\setminus E)=0$ for some closed ellipsoid $E$.
\end{theorem}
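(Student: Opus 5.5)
The chain $\operatorname{cap}_p(K)\geq\operatorname{cap}^{\mathcal{A}}_p(K)$ follows from $\mathcal{E}_p(u)\leq\|\nabla u\|_{L^p}$ in \eqref{eq:affineSobolev}, applied to every admissible $u$ in \eqref{eq:cap} and \eqref{eq:affcap}. The inequality $\operatorname{cap}^{\mathcal{A}}_p(K)\geq\operatorname{cap}_p(K^\star)$ is Xiao's \eqref{eq:affisocap}. For the sufficiency part of the rigidity statement, let $E\subseteq K$ be a closed ellipsoid with $C_p(K\setminus E)=0$. Write $E=A(rB^n)+x_0$ with $A\in\mathrm{SL}(n)$. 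Then $|K|=|E|$, and admissibility ``$\tilde u\ge1$ q.e.\ on $K$'' coincides with admissibility on $E$, so the capacities agree. Affine invariance then gives $\operatorname{cap}^{\mathcal{A}}_p(K)=\operatorname{cap}^{\mathcal{A}}_p(rB^n)=\operatorname{cap}_p(K^\star)$, since $\operatorname{cap}^{\mathcal{A}}_p$ and $\operatorname{cap}_p$ agree on balls. For this I will use that the infimum in \eqref{eq:affcap} can equivalently be taken over $D^{1,p}$ functions with $\tilde u\ge1$ q.e.\ on $K$, which is implicit in Theorem \ref{thm:potential}.

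For necessity, assume equality holds. By Theorem \ref{thm:potential} there is a minimizer $u$ with $0\le u\le1$, $\tilde u=1$ q.e.\ on $K$, and $\mathcal{E}_p(u)^p=\operatorname{cap}^{\mathcal{A}}_p(K)$. Let $u^\star$ be its symmetric decreasing rearrangement. Then $u^\star\ge1$ on $K^\star$ up to q.e.\ issues: the superlevel set $\{u\ge1\}\supseteq K$ up to a null set has measure $\ge|K|$. Combining the affine P\'olya--Szeg\H{o} principle \eqref{e:affinePZ} with the fact that $\mathcal{E}_p=\|\nabla\cdot\|_{L^p}$ on radial functions gives
\[
\operatorname{cap}_p(K^\star)\le\|\nabla u^\star\|_p^p=\mathcal{E}_p(u^\star)^p\le\mathcal{E}_p(u)^p=\operatorname{cap}^{\mathcal{A}}_p(K)=\operatorname{cap}_p(K^\star).
\]
So all inequalities are equalities. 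First, $u^\star$ is the Euclidean capacitary potential of $K^\star$, by uniqueness of minimizers of the strictly convex problem; in particular $|\{u=1\}|=|K|$ and $u^\star$ has no flat zones below level $1$. Second, equality holds in \eqref{e:affinePZ}. By the equality characterization of Wang \cite{Wang2013} (or Nguyen \cite{Nguyen2016}), valid when $u^\star$ has no critical flat zones, $u=u^\star\circ A^{-1}(\cdot-x_0)$ for some $A\in\mathrm{SL}(n)$ and $x_0$. Hence the superlevel set $\{u\ge1\}$, taken with the quasicontinuous representative, is an ellipsoid $E$ with $|E|=|K|$.

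It remains to compare $K$ with $E$. Since $\tilde u=1$ q.e.\ on $K$, we get $K\subseteq E$ up to a set of zero $p$-capacity. Because $|K|=|E|$ and $E$ is the closure of its interior, $K$ is compact, and $|E\setminus K|=0$ with $E\setminus K$ relatively open in $E$, we obtain $E\subseteq K$. Then $C_p(K\setminus E)=0$, since $K\setminus E\subseteq\{\tilde u=1\}\setminus E$ and $\tilde u<1$ outside $E$, where $u$ is an explicit radial profile composed with an affine map.

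The main obstacle is the equality step in \eqref{e:affinePZ}. Wang's characterization requires the hypothesis that $\{\nabla u^\star=0\}\cap\{0<u^\star<\|u\|_\infty\}$ is null. I plan to verify it using the explicit form of the Euclidean capacitary potential of a ball, $u^\star(x)=\min\{1,(|x|/r)^{(p-n)/(p-1)}\}$, whose gradient vanishes nowhere in $\{0<u^\star<1\}$. If Wang's theorem is stated only for $W^{1,p}$ rather than $D^{1,p}$, I would truncate, passing to $(u-\varepsilon)_+$, which is compactly supported; this preserves equality by the layered structure of the argument.
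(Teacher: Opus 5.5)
Your proof is correct in outline, but it follows a different route from the paper's. You take the inequality from Xiao, and you get rigidity from the affine P\'olya--Szeg\H{o} principle \eqref{e:affinePZ} together with Wang's characterization of its equality cases, applied to the potential $u$ of Theorem \ref{thm:potential}. The paper never rearranges $u$ affinely. It freezes the norm $H_u=h(\Gamma_pL_u,\cdot)$ through Lemma \ref{lem:frozen}, bounds the anisotropic capacity by convex symmetrization (Lemma \ref{lem:Wulff}(ii)), and then applies the $L_p$ Busemann--Petty centroid inequality. Equality in \eqref{eq:capchain} therefore forces $\Gamma_pL_u=A_0B^n$. After that, $w=u\circ A_0$ is a Euclidean extremal, so only the Euclidean P\'olya--Szeg\H{o} principle, uniqueness of the capacitary potential of $K^\star$, and Brothers--Ziemer are needed. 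Your route is shorter if Wang's theorem is taken as a black box, but that theorem is the deep input. The paper's route keeps the factor $(|\Gamma_pL_u|/|L_u|)^{p/n}$ in \eqref{eq:capchain}, which Theorem \ref{thm:BPstab} turns into the quantitative bound of Theorem \ref{thm:stab}(ii). That is why the paper deliberately avoids \eqref{e:affinePZ}.

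One step needs more than the phrase ``layered structure''. In the paper's truncation argument the Dirichlet energy is additive, but $\mathcal{E}_p^p$ is not additive under $u=(u-\varepsilon)_++\min\{u,\varepsilon\}$. It is, however, superadditive, and that suffices:
\begin{itemize}
\item For every $\xi$, $\|\nabla_\xi u\|_p^p=\|\nabla_\xi(u-\varepsilon)_+\|_p^p+\|\nabla_\xi\min\{u,\varepsilon\}\|_p^p$.
\item The map $g\mapsto(\int_{S^{n-1}}g^{-n/p}\,d\xi)^{-p/n}$ is concave and $1$-homogeneous on positive functions, by the reverse Minkowski inequality with exponent $-n/p<1$.
\item Hence $\mathcal{E}_p(u)^p\geq\mathcal{E}_p((u-\varepsilon)_+)^p+\mathcal{E}_p(\min\{u,\varepsilon\})^p$.
\item Apply \eqref{e:affinePZ} to both pieces, use additivity for the radial $u^\star$, and use $\mathcal{E}_p(u)=\mathcal{E}_p(u^\star)$. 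This forces $\mathcal{E}_p((u-\varepsilon)_+)=\mathcal{E}_p((u^\star-\varepsilon)_+)$.
\end{itemize}

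A few smaller points also need attention:
\begin{itemize}
\item $(u-\varepsilon)_+$ is not known to be compactly supported at this stage. What Wang's theorem needs is $(u-\varepsilon)_+\in W^{1,p}(\mathbb{R}^n)$, which holds since $|\{u>\varepsilon\}|<\infty$ and $u\in L^{p^*}$.
\item The same truncation, together with Proposition \ref{prop:energy}(iii), is how \eqref{e:affinePZ} extends to $D^{1,p}(\mathbb{R}^n)$, including for $\min\{u,\varepsilon\}$.
\item You must show that the maps $(A_\varepsilon,x_\varepsilon)$ are compatible as $\varepsilon\to0$. The superlevel sets $\{u>s\}$ are the same ellipsoids for every $\varepsilon<s$, as in the paper's treatment of $y_t$.
\item The admissibility $u^\star\in\mathcal{W}(K^\star)$, which you pass over, follows as in the paper. $u^\star\geq1$ on the ball of volume $|\{u\geq1\}|\geq|K|$, and $u^\star$ is continuous off the origin, a set of zero capacity.
\end{itemize}
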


The common thread of this paper is the variational structure of the affine energy. For every nonzero function $f$ in $W^{1,p}_0(\Omega)$, or in $D^{1,p}(\mathbb{R}^n)$ when $1<p<n$,
\[
\mathcal{E}_p(f)^p=\min\Big\{\int_{\mathbb{R}^n}h(\Gamma_pL,\nabla f)^p\,dx\ :\ L\text{ a star body with }|L|=\omega_n\Big\},
\]
where $\Gamma_pL$ is the $L_p$ centroid body of $L$ and $h(\Gamma_pL,\cdot)$ its support function (see Section \ref{sec:convex}). The minimum is attained exactly at the dilate $L_f$ of volume $\omega_n$ of the polar projection body of $f$, and $h(\Gamma_pL_f,\cdot)=H_f$. (Taking the minimum over all origin-symmetric convex bodies of volume $\omega_n$, instead of the bodies $\Gamma_pL$, leads to the optimal Sobolev norms of Lutwak, Yang and Zhang \cite{LYZ2006}.) Roughly speaking, this variational structure turns an affine problem into an anisotropic one, to which Euclidean methods apply, and the $L_p$ Busemann--Petty centroid inequality $|\Gamma_pL|\geq|L|$ of Lutwak, Yang and Zhang \cite{LYZ2000} (see also \cite{CG2002}) measures what is \emph{gained}. This strategy was originally exploited by Haddad, Jim\'enez and Montenegro \cite{HJM2016} and Nguyen \cite{Nguyen2016}, who used it for affine Sobolev inequalities and affine P\'olya--Szeg\H{o} principles, respectively.

In this work, we have decided to use this strategy in the proofs of Theorems \ref{thm:iso} and \ref{thm:isocap} (it is also the main tool of Theorem \ref{thm:KJ}), rather than using directly the affine P\'olya--Szeg\H{o} principle \eqref{e:affinePZ}. The decision is not purely cosmetic; as mentioned above, if $u$ is an extremal, the affine functional coincides with the anisotropic functional with norm $H_u$, and the anisotropic isoperimetric inequality compares it with the Euclidean value at the cost of a power of $|\Gamma_pL_u|/|L_u|\geq1$. This allows us to obtain quantitative forms of the isoperimetric inequalities via a stability result for the $L_p$ Busemann--Petty centroid inequality. This is the purpose of the second part of the paper; we first obtain such a stability result by using B\"or\"oczky's stability version of the Orlicz--Petty projection inequality \cite{Boroczky2013}. Since it may be of independent interest, we include the statement below.

\begin{theorem}\label{thm:BPstab}
Let $p\geq1$. There exists $\gamma=\gamma(n,p)>0$ such that
\[
\frac{|\Gamma_pL|}{|L|}\geq1+\gamma\,\delta_{\mathrm{BM}}(L,B^n)^{840(n+p)}
\]
for every origin-symmetric convex body $L\subset\mathbb{R}^n$.
\end{theorem}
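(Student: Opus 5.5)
The plan is to reduce the stability of the $L_p$ Busemann--Petty centroid inequality to Böröczky's stability version of the Orlicz--Petty projection inequality \cite{Boroczky2013}, via the classical duality between $\Gamma_p$ and the $L_p$ polar projection body $\Pi_p^*$ due to Lutwak, Yang and Zhang \cite{LYZ2000}. Given an origin-symmetric convex body $L$, set $K=\Gamma_pL$ (again origin-symmetric and convex). The key identity is the mixed-volume relation $V_p(L',\Gamma_pK)=\frac{\omega_n}{|K|}\widetilde V_{-p}(K,\Pi_p^*L')$ relating $L_p$ mixed volumes and dual mixed volumes. Applied with $L'=K=\Gamma_pL$ against $L$, together with Minkowski's $L_p$ inequality and the dual Minkowski inequality, it yields the chain
\[
\frac{|\Gamma_pL|}{|L|}\;\geq\;\Big(\frac{|\Pi_p^*(\Gamma_pL)|\,|\Gamma_pL|}{\omega_n^2}\Big)^{-c}
\]
with some positive exponent $c=c(n,p)$, up to normalization. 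This is exactly how the centroid inequality is derived from the $L_p$ Petty projection inequality $|\Pi_p^*K|\,|K|^{(n-p)/p}\leq\omega_n^{n/p}$. Hence any deficit in the projection inequality for $K=\Gamma_pL$ gives a deficit in the centroid inequality, with polynomial loss.

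Next, invoke Böröczky's theorem. The $L_p$ polar projection body is an Orlicz projection body with $\varphi(t)=|t|^p$, which is convex, even and strictly increasing on $[0,\infty)$ for $p\geq1$. Böröczky's result should then give
\[
|\Pi_p^*K|\,|K|^{(n-p)/p}\leq\omega_n^{n/p}\big(1-\gamma_0\,\delta_{\mathrm{BM}}(K,B^n)^{\alpha}\big),
\]
with an explicit exponent $\alpha$ (of order $840n$ in his statement, for origin-symmetric bodies) and $\gamma_0=\gamma_0(n,p)$. Plugging this into the chain, and expanding $(1-x)^{-c}\geq1+cx$, produces $|\Gamma_pL|/|L|\geq1+\gamma_1\,\delta_{\mathrm{BM}}(\Gamma_pL,B^n)^{\alpha}$.

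Finally, transfer the distance from $\Gamma_pL$ back to $L$. This is the main obstacle. Both sides are $\mathrm{GL}(n)$-equivariant, so we may put $\Gamma_pL$ in a position close to $B^n$ and show that $L$ is then also quantitatively close to a ball. We will not rely on a direct continuity argument. Instead, we split into two cases.
\begin{itemize}
\item If $|\Gamma_pL|/|L|\geq2$, the claimed inequality holds trivially for small $\gamma$, because $\delta_{\mathrm{BM}}$ is bounded by a constant depending only on $n$ (John's theorem).
\item Otherwise, equality in the dual Minkowski step of the chain forces $L$ to be a dilate of $\Pi_p^*\Gamma_pL$ up to the deficit. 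A stability version of the dual Minkowski (Hölder) inequality in radial-function form gives an $L^1$-type closeness of the radial functions of $L$ and $\Pi_p^*\Gamma_pL$ with a square-root loss. For origin-symmetric convex bodies, an $L^1$ radial distance controls the Hausdorff, and hence Banach--Mazur, distance with a further power loss of order $n$. In addition, $\Pi_p^*$ is Lipschitz in Banach--Mazur distance near the ball, so $\Pi_p^*\Gamma_pL$ is close to $B^n$ whenever $\Gamma_pL$ is.
\end{itemize}
Combining these gives $\delta_{\mathrm{BM}}(L,B^n)\lesssim\delta_{\mathrm{BM}}(\Gamma_pL,B^n)+(\text{deficit})^{\beta}$. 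Since the deficit itself is at least $\delta_{\mathrm{BM}}(\Gamma_pL,B^n)^{\alpha}$, collecting the exponents (up to $840(n+p)$, absorbing $p$-dependent losses) yields the claimed bound.
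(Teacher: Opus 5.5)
Your proposal is essentially the paper's proof. Both use the identity $|\Gamma_pL|=V_p(\Gamma_pL,\Gamma_pL)=\frac{\omega_n}{|L|}\widetilde V_{-p}(L,\Pi^*_p\Gamma_pL)$, B\"or\"oczky's theorem for $\Gamma_pL$, Nguyen's quantitative dual Minkowski inequality together with the bound $|L\triangle aQ|\geq c_1|L|\,\delta_{\mathrm{BM}}(L,aQ)^n$, the Lipschitz estimate $\delta_{\mathrm{BM}}(\Pi^*_pM,B^n)\leq C\,\delta_{\mathrm{BM}}(M,B^n)$ and the triangle inequality; the paper just adds the B\"or\"oczky and Nguyen deficits in one inequality instead of splitting into cases.

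Two small corrections: no $L_p$ Minkowski inequality is needed and the chain actually reads $|\Gamma_pL|/|L|\geq\omega_n^{n/p}\big(|\Pi^*_p\Gamma_pL|\,|\Gamma_pL|^{(n-p)/p}\big)^{-1}$. Also, B\"or\"oczky's error term for $\varphi(t)=|t|^p$ is $2\delta^{840(n+p)}$, not of order $\delta^{840n}$; this is the source of the exponent $840(n+p)$, since the transfer step only costs $\delta^{2n}$.
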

Here $\delta_\mathrm{BM}$ is the Banach--Mazur distance between origin-symmetric convex bodies $Q_1,Q_2\subset\mathbb{R}^n$,
\[
\delta_{\mathrm{BM}}(Q_1,Q_2)=\min\{\log\lambda\ :\ Q_1\subseteq AQ_2\subseteq\lambda Q_1,\ A\in\mathrm{GL}(n)\}.
\]
We would like to remark that no attempt to optimize the exponent has been performed. For $p=1$, Theorem \ref{thm:BPstab} is due to Nguyen \cite[Proposition 5.3]{Nguyen2016}; in the plane, a stability version in the Banach--Mazur distance was obtained by Ivaki \cite{Ivaki2016}. We extend Nguyen's argument to every $p\geq1$.

In our applications, the affine deficit dominates the Busemann--Petty deficit of $L_u$, which a stability version of the $L_p$ Busemann--Petty centroid inequality turns into a distance from $L_u$ to ellipsoids. Using the latter together with the Euclidean stability results \eqref{eq:quantFK} and \eqref{eq:quantisocap}, we provide quantitative forms of \eqref{eq:iso} and \eqref{eq:affisocap}. In addition, by further assuming convexity of the domain we can get a stability result of \eqref{eq:KJ}. Let
\[
\mathcal{F}_{\mathrm{aff}}(K)=\inf\Big\{\frac{|K\triangle E|}{|K|}\ :\ E\text{ an ellipsoid with }|E|=|K|\Big\}
\]
be the affine Fraenkel asymmetry of $K$.

\begin{theorem}\label{thm:stab}
Let $\beta=2$ if $p=2$ and $\beta=1680(n+p)$ if $p\neq2$.
\begin{enumerate}[label=\textup{(\roman*)}]
\item Let $p>1$ and $1\leq q<p^*$. There is $c=c(n,p,q)>0$ such that, for every bounded open set $\Omega\subset\mathbb{R}^n$,
\[
\Lambda^{\mathcal{A}}_{p,q}(\Omega)\geq\big(1+c\,\mathcal{F}_{\mathrm{aff}}(\Omega)^{\beta}\big)\Lambda_{p,q}(\Omega^\star).
\]
In particular, $T^{\mathcal{A}}_p(\Omega)\leq\big(1+c\,\mathcal{F}_{\mathrm{aff}}(\Omega)^{\beta}\big)^{-1/(p-1)}T_p(\Omega^\star)$ when $q=1$.
\item Let $1<p<n$. There is $c=c(n,p)>0$ such that, for every compact set $K\subset\mathbb{R}^n$ with $|K|>0$,
\[
\operatorname{cap}^{\mathcal{A}}_p(K)\geq\big(1+c\,\mathcal{F}_{\mathrm{aff}}(K)^{\beta}\big)\operatorname{cap}_p(K^\star).
\]

\item Let $p>1$ and $1<q<p^*$. For every $\eta>0$ there is $\varepsilon=\varepsilon(n,p,q,\eta)>0$ with the following property. If $\Omega\subset\mathbb{R}^n$ is a bounded open convex set such that
\[
T^{\mathcal{A}}_p(\Omega)^{(p-1)\sigma}\,\Lambda^{\mathcal{A}}_{p,q}(\Omega)\leq(1+\varepsilon)\,T_p(B^n)^{(p-1)\sigma}\,\Lambda_{p,q}(B^n),
\]
then there is an open ellipsoid $E$, with center $z$, such that $E\subseteq\Omega\subseteq z+e^{\eta}(E-z)$; in particular, $\mathcal{F}_{\mathrm{aff}}(\Omega)\leq e^{n\eta}-1$.
\end{enumerate}
\end{theorem}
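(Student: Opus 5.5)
The plan is to reduce each affine deficit to a Euclidean deficit times a power of the Busemann--Petty ratio $|\Gamma_pL_u|/|L_u|$, and then convert the two pieces into control of $\mathcal{F}_{\mathrm{aff}}$: the first through \eqref{eq:quantFK}, \eqref{eq:quantisocap}, the second through Theorem \ref{thm:BPstab}.

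\emph{Reduction for (i).} Take a nonnegative minimizer $u$ for $\Lambda^{\mathcal{A}}_{p,q}(\Omega)$; existence follows as in Theorem \ref{thm:iso}. Set $L=L_u$ and $Q=\Gamma_pL$, so that $\mathcal{E}_p(u)^p=\int_\Omega h(Q,\nabla u)^p\,dx$. Choose $A\in\mathrm{GL}(n)$ with $|\det A|=1$ and write $Q=\lambda A^{-t}Q_0$, where $\lambda^n=|Q|/\omega_n$ and $Q_0$ is in a normalized position (e.g.\ John position) with $|Q_0|=\omega_n$. The map $v=u\circ A$ is supported in $A^{-1}\Omega$, which has the same volume and the same $\mathcal{F}_{\mathrm{aff}}$ as $\Omega$. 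It satisfies
\[
\int h(Q,\nabla u)^p=\lambda^p\int h(Q_0,\nabla v)^p .
\]
Since $Q_0$ is John-normalized, $h(Q_0,\cdot)\ge c\,|\cdot|$, and more precisely $h(Q_0,\cdot)\ge(1-C\,\delta)|\cdot|$ with $\delta=\delta_{\mathrm{BM}}(Q,B^n)$. This lower bound is lossy. The cleaner route is to use the Euclidean theory directly when $Q$ is close to a ball, and the gain $\lambda^p=(|\Gamma_pL|/|L|)^{p/n}$ when it is not. Concretely, I would aim to prove
\[
\Lambda^{\mathcal{A}}_{p,q}(\Omega)\ \ge\ \Big(\tfrac{|\Gamma_pL|}{|L|}\Big)^{p/n}\,\Lambda^{h_{Q_0}}_{p,q}(A^{-1}\Omega),
\]
where $\Lambda^{h}$ denotes the anisotropic Poincar\'e--Sobolev constant.

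\emph{Case split and combination.} Fix a threshold $\delta_0$.
\begin{itemize}
\item If $\delta\ge\delta_0$, then Theorem \ref{thm:BPstab} gives a fixed gain. The anisotropic isoperimetric inequality (convex symmetrization with respect to $Q_0$, whose Wulff optimal set has the right volume normalization) shows that $\Lambda^{h_{Q_0}}\ge\Lambda_{p,q}(\Omega^\star)$. This yields $1+c$, which dominates $1+c\,\mathcal{F}_{\mathrm{aff}}^\beta$ since $\mathcal{F}_{\mathrm{aff}}\le2$.
\item If $\delta$ is small, we may take $Q_0$ within $e^{\delta}$ of $B^n$. Then $h(Q_0,\cdot)\ge e^{-\delta}|\cdot|$, so $\Lambda^{\mathcal{A}}\ge e^{-p\delta}(1+\gamma\delta^{840(n+p)})^{p/n}\Lambda_{p,q}(A^{-1}\Omega)$. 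By \eqref{eq:quantFK} this is at least $e^{-p\delta}(1+\gamma\delta^{840(n+p)})^{p/n}(1+c\mathcal{F}_{\mathrm{aff}}^2)\Lambda_{p,q}(\Omega^\star)$.
\end{itemize}
The loss $e^{-p\delta}$ beats the Busemann--Petty gain for small $\delta$, so this naive combination fails. The fix is to work with the Wulff-shape isoperimetric comparison instead of the pointwise bound. The anisotropic functional with norm $h_{Q_0}$ is minimized by the Wulff set $W$ (the polar of $Q_0$ scaled) with value $\Lambda_{p,q}(\Omega^\star)$ times a volume factor. Its quantitative version controls $|A^{-1}\Omega\triangle W|$ by the anisotropic deficit to power $2$ (Fusco--Zhang type, via Pólya--Szegő quantitative). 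Meanwhile $W$ is within Banach--Mazur distance $\delta$ of an ellipsoid, so $\mathcal{F}_{\mathrm{aff}}(\Omega)\lesssim \mathcal{F}^{Q_0}(A^{-1}\Omega)+\delta$. Both the anisotropic deficit and the Busemann--Petty deficit are bounded by the total deficit $D$. Hence $\mathcal{F}_{\mathrm{aff}}\lesssim D^{1/2}+D^{1/(840(n+p))}$, which gives the exponent $\beta=1680(n+p)$. For $p=2$ the affine energy reduces (since $\Gamma_2L$ is an ellipsoid) to a Euclidean energy after a linear change, so $\beta=2$ directly from \eqref{eq:quantFK}. The torsion statement is the case $q=1$.

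\emph{Parts (ii) and (iii).} Part (ii) is identical, using the minimizer from Theorem \ref{thm:potential} and the anisotropic quantitative isocapacitary inequality, in place of \eqref{eq:quantisocap}, applied with $K$ replaced by $A^{-1}K$. For (iii) I would argue by compactness rather than quantitatively. Suppose convex $\Omega_k$ are normalized by affine maps (John position), with affine Kohler--Jobin deficit tending to $0$. Blaschke selection gives a convex limit, and continuity of $T^{\mathcal{A}}_p$ and $\Lambda^{\mathcal{A}}_{p,q}$ under Hausdorff convergence of convex sets (via $\Gamma$-convergence of the energies and of $L_{u_k}$) shows that the limit attains equality in \eqref{eq:KJ}. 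Hence the limit is an ellipsoid by Theorem \ref{thm:KJ}. Hausdorff closeness of convex bodies to an ellipsoid in John position upgrades to $E\subseteq\Omega\subseteq z+e^\eta(E-z)$. The main obstacle I expect is the anisotropic quantitative Faber--Krahn/isocapacitary step with constants uniform over norms near Euclidean, together with the continuity of the affine functionals in (iii).
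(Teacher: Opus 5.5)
You have a genuine gap in parts (i) and (ii) for $p\neq2$. Your argument relies on a quantitative \emph{anisotropic} Faber--Krahn inequality and a quantitative anisotropic isocapacitary inequality, both with exponent $2$ and with constants uniform over the norms $h_{Q_0}$. You neither prove nor cite these. They do not follow from \eqref{eq:quantFK} and \eqref{eq:quantisocap}, which are purely Euclidean, and getting uniformity in the norm would mean redoing the Fusco--Zhang and Mukoseeva proofs. You flag this yourself as the main obstacle, so as written (i) and (ii) are not proved. Two smaller slips: for the energy $\int h_{Q_0}(\nabla v)^p\,dx$ the optimal sets are dilates of $Q_0$, not of its polar. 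Also, your bound $\mathcal{F}_{\mathrm{aff}}\lesssim D^{1/2}+D^{1/(840(n+p))}$ would give $\beta=840(n+p)$, not the $1680(n+p)$ you state. That mismatch already hints that the stated exponent comes from a different mechanism.

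The detour is unnecessary, because your diagnosis of the small-$\delta$ case is wrong. The Busemann--Petty gain is not supposed to beat the loss $e^{-p\delta}$; its job is to make $\delta$ small in terms of the deficit. Write $\Lambda^{\mathcal{A}}_{p,q}(\Omega)=(1+\varepsilon)\Lambda_{p,q}(\Omega^\star)$ with $\varepsilon\le1$; otherwise there is nothing to prove, since $\mathcal{F}_{\mathrm{aff}}\le2$. Take a minimizer $u$ with $\|u\|_q=1$.
\begin{enumerate}[label=\textup{(\arabic*)}]
\item The chain $\mathcal{E}_p(u)^p\ge(|\Gamma_pL_u|/|L_u|)^{p/n}\Lambda_{p,q}(\Omega^\star)$ and Theorem~\ref{thm:BPstab} give $\delta_{\mathrm{BM}}(L_u,B^n)\le C\varepsilon^{1/N}$, with $N=840(n+p)$.
\item Hence $\eta=\delta_{\mathrm{BM}}(\Gamma_pL_u,B^n)\le\frac{2n+p}{p}\,\delta_{\mathrm{BM}}(L_u,B^n)\le C\varepsilon^{1/N}$. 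This transfer is needed because Theorem~\ref{thm:BPstab} controls $L_u$, not $\Gamma_pL_u$.
\item Choose $E=tA_0B^n\subseteq\Gamma_pL_u\subseteq e^{\eta}E$ with $A_0\in\mathrm{SL}(n)$. Since $|\Gamma_pL_u|\ge\omega_n$, you get $t\ge e^{-\eta}$.
\item As $H_u\ge h_E$, Lemma~\ref{lem:Wulff}(iii) and \eqref{eq:quantFK} applied to $A_0^{-1}\Omega$ give
\[
(1+\varepsilon)\Lambda_{p,q}(\Omega^\star)=\int_\Omega H_u(\nabla u)^p\,dx\ge t^p\,\Lambda_{p,q}(A_0^{-1}\Omega)\ge e^{-p\eta}\big(1+c\,\mathcal{F}_{\mathrm{aff}}(\Omega)^2\big)\Lambda_{p,q}(\Omega^\star).
\]
\item Therefore $c\,\mathcal{F}_{\mathrm{aff}}(\Omega)^2\le(1+\varepsilon)e^{p\eta}-1\le C\varepsilon^{1/N}$, which is $\beta=2N=1680(n+p)$.
\end{enumerate}
This is your own second bullet with the Busemann--Petty factor discarded and the separate bound on $\delta$ inserted. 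It is exactly the paper's argument, and it explains the doubled exponent. With Theorem~\ref{thm:DPMM} in place of \eqref{eq:quantFK}, the same argument proves (ii).

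Your $p=2$ reduction and your compactness scheme for (iii) agree with the paper. In (iii) you do not need $\Gamma$-convergence. Set $P(\Omega)=T^{\mathcal{A}}_p(\Omega)^{(p-1)\sigma}\Lambda^{\mathcal{A}}_{p,q}(\Omega)$. Monotonicity under inclusion plus scaling give $P(\Omega_k)\ge\big(\frac{1-\tau_k}{1+\tau_k}\big)^{p+\frac{np}{q}-n}P(\Omega_\infty)$ whenever $(1-\tau_k)\Omega_\infty\subseteq\Omega_k\subseteq(1+\tau_k)\Omega_\infty$. This already yields $P(\Omega_\infty)\le T_p(B^n)^{(p-1)\sigma}\Lambda_{p,q}(B^n)$. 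Convexity then upgrades ``ellipsoid up to a set of zero Sobolev capacity'' to $\Omega_\infty=E$.
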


As pointed out above, the situation of Kohler-Jobin's inequality is different because no quantitative form of \eqref{eq:KJEuclid} seems to be available. By Proposition \ref{prop:bodystab}, the deficit in \eqref{eq:KJ} still controls $\delta_{\mathrm{BM}}(\Pi^*_pu,B^n)$, and Proposition \ref{prop:reduction} shows that it bounds, up to a power, the Euclidean Kohler-Jobin deficit of a linear image of $\Omega$. Consequently, any quantitative form of \eqref{eq:KJEuclid} valid on a class of sets invariant under linear maps would yield a quantitative form of \eqref{eq:KJ} on the same class, with the distance to balls replaced by the distance to ellipsoids and the exponent multiplied by $840(n+p)$, or unchanged if $p=2$. We will show that such a class cannot contain all open sets (Remark \ref{rem:dust}). Part (iii) is therefore a qualitative substitute on convex sets, and a quantitative form of \eqref{eq:KJEuclid} on convex sets would give it a rate.

We finish by pointing out that for $p=2$ the anisotropic norm is always Euclidean up to a linear map, the factor $|\Gamma_2L_u|/|L_u|$ equals $1$, and the affine problems  essentially reduce to Euclidean ones (see Section \ref{sec:p2}). Moreover, in this case, the affine energy of a function is the least Dirichlet energy of its images under $\mathrm{SL}(n)$ \cite{HuangLi2016,SchindlerTintarev2018}. Consequently $\operatorname{cap}^{\mathcal{A}}_2(K)$, $\Lambda^{\mathcal{A}}_{2,q}(\Omega)$ and $T^{\mathcal{A}}_2(\Omega)$ are extremal values of their Euclidean counterparts over the images of the set (Proposition \ref{prop:p2}), and for $p=2$ the equality statements in Theorems \ref{thm:iso}, \ref{thm:isocap} and \ref{thm:KJ} follow from their Euclidean counterparts applied to a suitable linear image of the set. Therefore, the substance of our main theorems lies in the case $p\neq2$.

This paper is organized as follows. In Section \ref{sec:prelim} we collect the background on convex bodies, Sobolev functions, capacities and the affine energy, including the anisotropic structure of $\mathcal{E}_p$, anisotropic functionals and the case $p=2$. In Section \ref{sec:existence} we prove that $\Lambda^{\mathcal{A}}_{p,q}(\Omega)$ is attained for all $p>1$ and $1\leq q<p^*$, and we prove Theorem \ref{thm:potential}. Section \ref{sec:iso} contains the proofs of Theorems \ref{thm:iso} and \ref{thm:isocap} and of Corollary \ref{cor:SV}, and the comparison of the affine and Euclidean capacities (Proposition \ref{prop:compare}); Section \ref{sec:KJ} contains the proof of Theorem \ref{thm:KJ}, whereas Theorem \ref{thm:BPstab} is proved in Section \ref{sec:BPstab}. Finally, Section \ref{sec:stability} contains the bounds in terms of the polar projection body of an extremal (Proposition \ref{prop:bodystab}), the proof of Theorem \ref{thm:stab}, and the reduction of the affine Kohler-Jobin deficit to the Euclidean one (Proposition \ref{prop:reduction}).

\section{Background and preliminaries}\label{sec:prelim}

We work in the $n$-dimensional Euclidean space $\mathbb{R}^n$, $n\geq2$, endowed with the standard inner product $\langle\cdot,\cdot\rangle$ and the Euclidean norm $|\cdot|$. We denote by $B^n$ the Euclidean unit ball, by $S^{n-1}$ its boundary and by $\omega_n$ the volume of $B^n$; the Lebesgue measure of a measurable set $A$ is denoted by $|A|$, and $d\xi$ stands for integration with respect to the $(n-1)$-dimensional Hausdorff measure on $S^{n-1}$. Throughout, $1<p<\infty$, $p'=p/(p-1)$, and $p^*=np/(n-p)$ if $p<n$, $p^*=\infty$ if $p\geq n$. In statements involving $D^{1,p}(\mathbb{R}^n)$ or capacities of compact sets we assume $1<p<n$. For $z\in\mathbb{R}^n$ and $r>0$, $B_r(z)$ is the open ball of radius $r$ centered at $z$, and $B_r=B_r(0)$.

\subsection{Convex bodies}\label{sec:convex}
We refer to the books by Gardner \cite{Gardner2006} and Schneider \cite{Schneider2014} for background. A convex body is a compact convex set with non-empty interior, and $\mathcal{K}^n_o$ denotes the class of convex bodies containing the origin in their interiors. For $Q\in\mathcal{K}^n_o$, its support function, its gauge and its polar body are
\[
h_Q(x)=\max_{y\in Q}\langle x,y\rangle,\qquad \|x\|_Q=\min\{\lambda\geq0:x\in\lambda Q\},\qquad Q^\circ=\{x\in\mathbb{R}^n:h_Q(x)\leq1\},
\]
so that $\|\cdot\|_{Q^\circ}=h_Q$; if $Q$ is origin-symmetric, $h_Q$ and $\|\cdot\|_Q$ are norms. A star body is a compact set $L$, star-shaped with respect to the origin, whose radial function $\rho_L(\xi)=\max\{\lambda\geq0:\lambda\xi\in L\}$ is positive and continuous on $S^{n-1}$; in polar coordinates, $|L|=\frac1n\int_{S^{n-1}}\rho_L(\xi)^n\,d\xi$. An ellipsoid is the image of $B^n$, or of its closure, under an invertible affine map; it is origin-symmetric if the map is linear.

For a convex body $Q$, $S_Q$ denotes its surface area measure on $S^{n-1}$. The mixed volume of two convex bodies $Q$ and $Z$ satisfies $V(Q[n-1],Z)=\frac1n\int_{S^{n-1}}h_Z\,dS_Q$; it is increasing in each argument with respect to inclusion and positively homogeneous of degree $n-1$ in $Q$ \cite[Section 5.1]{Schneider2014}. Following Lutwak \cite{Lutwak1993}, the $L_p$ mixed volume of $M,N\in\mathcal{K}^n_o$ is
\[
V_p(M,N)=\frac1n\int_{S^{n-1}}h_N^p\,h_M^{1-p}\,dS_M,
\]
so that $V_p(M,M)=|M|$. For star bodies $L$ and $Q$, the dual mixed volume $\widetilde{V}_{-p}(L,Q)=\frac1n\int_{S^{n-1}}\rho_L^{n+p}\rho_Q^{-p}\,d\xi$ satisfies, by H\"older's inequality,
\begin{equation}\label{eq:dualmixed}
\widetilde{V}_{-p}(L,Q)\geq|L|^{\frac{n+p}{n}}|Q|^{-\frac{p}{n}},
\end{equation}
with equality if and only if $L$ and $Q$ are dilates.

Let
\begin{equation}\label{eq:anp}
a_{n,p}=\frac{1}{n\omega_n}\int_{S^{n-1}}|\langle\xi,v\rangle|^p\,d\xi=\frac{\Gamma(\frac n2)\,\Gamma(\frac{p+1}2)}{\sqrt{\pi}\,\Gamma(\frac{n+p}2)},\qquad v\in S^{n-1}.
\end{equation}
The $L_p$ centroid body $\Gamma_pL$ of a star body $L$ and the $L_p$ projection body $\Pi_pM$ of $M\in\mathcal{K}^n_o$ are the origin-symmetric convex bodies given by
\begin{equation}\label{eq:GammaPi}
\begin{aligned}
h(\Gamma_pL,x)^p&=\frac{n+p}{n\,a_{n,p}\,|L|}\int_L|\langle x,y\rangle|^p\,dy,\\
h(\Pi_pM,x)^p&=\frac{1}{n\omega_n a_{n,p}}\int_{S^{n-1}}|\langle x,v\rangle|^p\,h_M(v)^{1-p}\,dS_M(v),
\end{aligned}
\end{equation}
where we write $h(Q,\cdot)=h_Q$. They are normalized so that $\Gamma_pB^n=\Pi_pB^n=B^n$, and coincide with the definitions of Lutwak, Yang and Zhang \cite{LYZ2000}. We write $\Pi^*_pM=(\Pi_pM)^\circ$. For $A\in\mathrm{GL}(n)$ one has $\Gamma_p(AL)=A\,\Gamma_pL$ and $\Pi^*_p(AM)=|\det A|^{-1/p}A\,\Pi^*_pM$ \cite{LYZ2000}. The $L_p$ Busemann--Petty centroid inequality \cite{LYZ2000} (see also \cite{CG2002} for a different proof) states that
\begin{equation}\label{eq:BP}
|\Gamma_pL|\geq|L|\quad\text{for every star body }L,
\end{equation}
with equality if and only if $L$ is an origin-symmetric ellipsoid.

For origin-symmetric convex bodies $Q_1$ and $Q_2$, the Banach--Mazur distance $\delta_{\mathrm{BM}}(Q_1,Q_2)$ defined in the introduction is invariant under linear images of either body and satisfies the triangle inequality. By John's theorem \cite{John1948}, every origin-symmetric convex body $Q$ contains an origin-symmetric ellipsoid $E$ with $Q\subseteq\sqrt n\,E$; hence $\delta_{\mathrm{BM}}(Q,B^n)\leq\frac12\log n$ and $\delta_{\mathrm{BM}}(Q_1,Q_2)\leq\log n$.

\subsection{Sobolev functions, rearrangements and capacities}\label{sec:functions}
Let $\Omega\subseteq\mathbb{R}^n$ be open. We denote by $W^{1,p}_0(\Omega)$ the closure of $C^\infty_c(\Omega)$ in $W^{1,p}(\mathbb{R}^n)$, with the norm $\|u\|_{W^{1,p}}=\big(\int_{\mathbb{R}^n}(|\nabla u|^p+|u|^p)\,dx\big)^{1/p}$, which is the one used in the definition of $C_p$ below; by the Poincar\'e inequality, $\|\nabla\cdot\|_p$ is an equivalent norm on $W^{1,p}_0(\Omega)$ when $\Omega$ is bounded, for every $p\geq1$; its elements are extended by zero outside $\Omega$, and $\|\cdot\|_q$ denotes the norm of $L^q(\mathbb{R}^n)$. For a measurable set $A$ of finite measure, $A^\star$ is the open ball centered at the origin with $|A^\star|=|A|$; for a compact set $K$, we let $K^\star$ be instead the closed ball centered at the origin with $|K^\star|=|K|$. For a measurable function $f$ whose level sets $\{|f|>t\}$, $t>0$, have finite measure, $f^\star=\int_0^\infty\chi_{\{|f|>t\}^\star}\,dt$ is the symmetric decreasing rearrangement of $|f|$; it is radially symmetric, radially nonincreasing and equimeasurable with $|f|$, so that $\|f^\star\|_q=\|f\|_q$. For $f\in W^{1,p}_0(\Omega)$, the P\'olya--Szeg\H{o} principle \cite{PolyaSzego1951,BrothersZiemer1988} gives $f^\star\in W^{1,p}_0(\Omega^\star)$ and $\|\nabla f^\star\|_p\leq\|\nabla f\|_p$, which implies \eqref{eq:FK}. By the Rellich--Kondrachov theorem, $\Lambda_{p,q}(\Omega)$ is attained when $\Omega$ is bounded.

For $t>0$ one has $\Lambda_{p,q}(t\Omega)=t^{n-p-np/q}\Lambda_{p,q}(\Omega)$ and $T_p(t\Omega)=t^{n+p'}T_p(\Omega)$, which shows that the left-hand side of \eqref{eq:KJEuclid} is invariant under dilations. The solution of $-\Delta_pu=1$ in the ball $B_R$ of radius $R$ with $u=0$ on $\partial B_R$ is $u(x)=\frac{p-1}{p}\,n^{-1/(p-1)}(R^{p'}-|x|^{p'})$ \cite[Remark 2.3]{Brasco2014}, whence
\begin{equation}\label{eq:TBall}
T_p(B_R)=\int_{B_R}u\,dx=\frac{\omega_n\,R^{n+p'}}{n^{\frac1{p-1}}\,(n+p')}.
\end{equation}

The Sobolev $p$-capacity of a set $E\subseteq\mathbb{R}^n$ is
\[
C_p(E)=\inf\Big\{\int_{\mathbb{R}^n}\big(|\nabla\varphi|^p+|\varphi|^p\big)\,dx\ :\ \varphi\in W^{1,p}(\mathbb{R}^n),\ \varphi\geq1\text{ a.e. in a neighborhood of }E\Big\}.
\]
Sets of zero Sobolev $p$-capacity have zero measure, and for $p>n$ only the empty set has zero Sobolev $p$-capacity, since by Morrey's inequality every point has positive Sobolev $p$-capacity; a property holds quasi-everywhere (q.e.) if it holds outside a set of zero Sobolev $p$-capacity.  A function $f$ is quasicontinuous if for every $\varepsilon>0$ there is an open set $G$ with $C_p(G)<\varepsilon$ such that the restriction of $f$ to $\mathbb{R}^n\setminus G$ is continuous. Every $u\in W^{1,p}(\mathbb{R}^n)$ has a quasicontinuous representative $\tilde u$, unique q.e., and two quasicontinuous functions that coincide a.e. in an open set coincide q.e. in it. For $p>n$, $\tilde u$ is the continuous representative of $u$. We shall use the following characterization, valid for every open set $\Omega$ and every $u\in W^{1,p}(\mathbb{R}^n)$ (see \cite[Chapter 4]{HKM2006} and \cite[Section 9.1]{AdamsHedberg1996}):
\begin{equation}\label{eq:W0cap}
u\in W^{1,p}_0(\Omega)\iff\tilde u=0\ \text{q.e. in }\mathbb{R}^n\setminus\Omega.
\end{equation}
Consequently, if $\Omega\subseteq\Omega'$ are open and $C_p(\Omega'\setminus\Omega)=0$, then $W^{1,p}_0(\Omega)=W^{1,p}_0(\Omega')$, and all the quantities considered in this paper coincide for $\Omega$ and $\Omega'$. In particular, if $\Omega$ is an ellipsoid up to a set of zero Sobolev $p$-capacity, then $\Omega$ can be replaced by the corresponding open ellipsoid.

Now let $1<p<n$. The homogeneous Sobolev space $D^{1,p}(\mathbb{R}^n)$ is the completion of $C^\infty_c(\mathbb{R}^n)$ with respect to $\|\nabla\cdot\|_p$. By the Sobolev inequality it is identified with $\{u\in L^{p^*}(\mathbb{R}^n):\nabla u\in L^p(\mathbb{R}^n)\}$; in particular, it contains $W^{1,p}(\mathbb{R}^n)$ and embeds continuously into $L^{p^*}(\mathbb{R}^n)$. Every $u\in D^{1,p}(\mathbb{R}^n)$ has a quasicontinuous representative $\tilde u$, unique q.e., and if $u_j\to u$ in $D^{1,p}(\mathbb{R}^n)$, a subsequence satisfies $\tilde u_j\to\tilde u$ q.e. \cite[Chapter 4]{HKM2006}. For a compact set $K$ let
\[
\mathcal{W}(K)=\{u\in D^{1,p}(\mathbb{R}^n):\ \tilde u\geq1\ \text{q.e. on }K\}.
\]
This class is convex and closed, hence weakly closed, and it is stable under the truncation $u\mapsto\min\{\max\{u,0\},1\}$. Every $u\in\mathcal{W}(K)$ is the limit in $D^{1,p}(\mathbb{R}^n)$ of functions $\varphi_j\in C^\infty_c(\mathbb{R}^n)$ with $\varphi_j\geq1$ on $K$ \cite[Chapters 2 and 4]{HKM2006}. Consequently, the infimum in \eqref{eq:cap} may be taken over $\mathcal{W}(K)$, and, since $\mathcal{E}_p$ is continuous on $D^{1,p}(\mathbb{R}^n)\setminus\{0\}$ (Proposition \ref{prop:energy}(iii)), so may the infimum in \eqref{eq:affcap}. We shall also use that if $U\subseteq\mathbb{R}^n$ is open and $w\in D^{1,p}(\mathbb{R}^n)$ satisfies $\tilde w=0$ q.e. on $\mathbb{R}^n\setminus U$, then $w$ is the limit in $D^{1,p}(\mathbb{R}^n)$ of functions in $C^\infty_c(U)$ \cite[Chapter 4]{HKM2006}.

Two capacities appear in this paper, with different roles. The variational capacity $\operatorname{cap}_p$ of \eqref{eq:cap}, for $1<p<n$, is the quantity studied in the isocapacitary problems. The Sobolev capacity $C_p$, defined for every $p>1$, only measures exceptional sets. It has to be used for this purpose when $p\geq n$, because then the variational capacity of every compact subset of $\mathbb{R}^n$ vanishes. For $1<p<n$ the two capacities are comparable on bounded sets, with constants depending only on $n$, $p$ and the radius of a ball containing the set; in particular, they have the same null sets \cite[Chapter 2]{HKM2006}.

Capacities scale as $\operatorname{cap}_p(\lambda K)=\lambda^{n-p}\operatorname{cap}_p(K)$, $\lambda>0$, and the same holds for $\operatorname{cap}^{\mathcal{A}}_p$. The capacitary potential of the closed ball $\overline{B}_R$ of radius $R$ centered at the origin is $v_R(x)=\min\{1,(|x|/R)^{(p-n)/(p-1)}\}$, and
\begin{equation}\label{eq:capball}
\operatorname{cap}_p(\overline{B}_R)=n\omega_n\Big(\frac{n-p}{p-1}\Big)^{p-1}R^{n-p}.
\end{equation}
For an open set, the capacity is defined by \eqref{eq:cap} with $K$ replaced by the open set. If $K$ is compact and $K_t=\{x\in\mathbb{R}^n:\operatorname{dist}(x,K)<t\}$ is its open $t$-neighborhood, then $\operatorname{cap}_p(K_t)\to\operatorname{cap}_p(K)$ as $t\to0$: if $\varphi\in C^\infty_c(\mathbb{R}^n)$ satisfies $\varphi\geq1$ on $K$ and $\eta>0$, then $(1+\eta)\varphi\geq1$ on the open set $\{\varphi>(1+\eta)^{-1}\}$, which contains $K_t$ for $t$ small. This allows us to state the sharp quantitative isocapacitary inequality for compact sets.

\begin{theorem}[{\cite{DPMM2021,Mukoseeva2023}}]\label{thm:DPMM}
There exists $c_{\mathrm{cap}}=c_{\mathrm{cap}}(n,p)>0$ such that
\[
\operatorname{cap}_p(K)\geq\big(1+c_{\mathrm{cap}}\,\mathcal{F}(K)^2\big)\operatorname{cap}_p(K^\star)
\]
for every compact set $K\subset\mathbb{R}^n$ with $|K|>0$.
\end{theorem}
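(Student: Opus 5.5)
The plan is to reduce the compact case to the open-set version of the sharp quantitative isocapacitary inequality from \cite{DPMM2021,Mukoseeva2023}, applied to the neighborhoods $K_t$. Three facts will be combined: $\operatorname{cap}_p(K_t)\to\operatorname{cap}_p(K)$ as $t\to0$ (shown just before the statement), $|K_t|\to|K|$ as $t\to0$, and continuity of the Fraenkel asymmetry along this approximation. The open-set statement gives, for each $t>0$,
\[
\operatorname{cap}_p(K_t)\geq\big(1+c\,\mathcal{F}(K_t)^2\big)\operatorname{cap}_p(K_t^\star),
\]
with $K_t^\star$ the ball of volume $|K_t|$. We will then let $t\to0$.

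The volumes and the balls pass to the limit without trouble. Since $K$ is compact, the sets $K_t$ decrease to $\bigcap_{t>0}K_t=K$, so $|K_t|\downarrow|K|$ by monotone convergence. By \eqref{eq:capball}, $\operatorname{cap}_p$ of a ball of volume $V$ equals a constant times $V^{(n-p)/n}$, so $\operatorname{cap}_p(K_t^\star)\to\operatorname{cap}_p(K^\star)$. The open ball and the closed ball have the same capacity, so whether $K_t^\star$ is taken open or closed does not matter.

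The asymmetry needs a little more care, though I expect it to be routine: we only need $\liminf_{t\to0}\mathcal{F}(K_t)\geq\mathcal{F}(K)$. For each $t$, pick a ball $B_t$ with $|B_t|=|K_t|$ that is nearly optimal for $\mathcal{F}(K_t)$. Let $B_t'$ be the concentric ball with $|B_t'|=|K|$. Then $|B_t\triangle B_t'|=|K_t|-|K|$, and $|K_t\triangle K|=|K_t\setminus K|$, which tends to $0$. By the triangle inequality for symmetric differences,
\[
|K\triangle B_t'|\leq|K_t\triangle B_t|+2(|K_t|-|K|),
\]
and therefore
\[
\mathcal{F}(K)\leq\frac{|K_t|}{|K|}\,\mathcal{F}(K_t)+o(1).
\]
This gives the required $\liminf$ inequality; in fact it shows $\mathcal{F}(K_t)\to\mathcal{F}(K)$.

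Letting $t\to0$ in the displayed inequality then yields the claim with $c_{\mathrm{cap}}=c$. The only genuine point to check is that the open-set theorem in \cite{DPMM2021,Mukoseeva2023} applies to arbitrary bounded open sets $K_t$, with no regularity assumption, and that its definition of capacity for open sets agrees with \eqref{eq:cap}. I would verify this in their statements. If they require bounded open sets, $K_t$ is bounded, so the theorem applies.
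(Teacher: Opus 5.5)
Your proposal is correct and follows the paper's proof essentially verbatim. You apply the open-set theorem (Mukoseeva's Theorem 1.4, rescaled to scale-invariant form) to $K_t$, pass to the limit using $\operatorname{cap}_p(K_t)\to\operatorname{cap}_p(K)$ and $|K_t|\to|K|$, and obtain $\mathcal{F}(K)\leq\liminf_{t\to0}\mathcal{F}(K_t)$ from the concentric ball $B_t'$ via $|K\triangle B_t'|\leq|K_t\triangle B_t|+2(|K_t|-|K|)$, exactly as the paper does. One small quibble: that estimate only gives lower semicontinuity, not your aside that $\mathcal{F}(K_t)\to\mathcal{F}(K)$, since the reverse bound needs the symmetric argument starting from a near-optimal ball for $K$; this is harmless because only the $\liminf$ inequality is used.
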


\begin{proof}
For open sets of finite measure this is \cite[Theorem 1.4]{Mukoseeva2023} (see \cite{DPMM2021} for $p=2$), written in scale-invariant form by means of the scaling of the capacity. Let $K$ be compact with $|K|>0$ and apply it to the open neighborhoods $K_t$. As $t\to0$, $\operatorname{cap}_p(K_t)\to\operatorname{cap}_p(K)$, $|K_t|\to|K|$ and $\operatorname{cap}_p(K_t^\star)\to\operatorname{cap}_p(K^\star)$, where $K_t^\star$ is the ball of volume $|K_t|$. If $B_t$ is a ball with $|B_t|=|K_t|$ and $|K_t\triangle B_t|\leq(\mathcal{F}(K_t)+t)|K_t|$, and $B'_t$ is the concentric ball with $|B'_t|=|K|$, then $|K\triangle B'_t|\leq(\mathcal{F}(K_t)+t)|K_t|+2(|K_t|-|K|)$, so that $\mathcal{F}(K)\leq\liminf_{t\to0}\mathcal{F}(K_t)$. The inequality for $K$ follows by letting $t\to0$.
\end{proof}

\subsection{The affine energy}\label{sec:energy}
Let
\[
c_{n,p}=(n\omega_n)^{1/n}\,a_{n,p}^{-1/p}.
\]
For $f\in W^{1,p}(\mathbb{R}^n)$, or $f\in D^{1,p}(\mathbb{R}^n)$ if $1<p<n$, and $\xi\in\mathbb{R}^n$, we write $\nabla_\xi f=\langle\nabla f,\xi\rangle$, and we call
\[
\Pi^*_pf=\{\xi\in\mathbb{R}^n:\ \|\nabla_\xi f\|_p\leq1\}
\]
the polar projection body of $f$. By Fubini's theorem,
\begin{equation}\label{eq:average}
\frac{1}{n\omega_n}\int_{S^{n-1}}\|\nabla_\xi f\|_p^p\,d\xi=a_{n,p}\,\|\nabla f\|_p^p.
\end{equation}
By the inequality between the power means of orders $-n$ and $p$ with respect to the normalized measure on $S^{n-1}$, and by \eqref{eq:average},
\begin{equation}\label{eq:domination}
\mathcal{E}_p(f)\leq c_{n,p}\,(n\omega_n)^{-1/n}\Big(\frac{1}{n\omega_n}\int_{S^{n-1}}\|\nabla_\xi f\|_p^p\,d\xi\Big)^{1/p}=\|\nabla f\|_p,
\end{equation}
with equality if and only if $\xi\mapsto\|\nabla_\xi f\|_p$ is constant on $S^{n-1}$; in particular, $\mathcal{E}_p(f)=\|\nabla f\|_p$ when $f$ is radial. This is the right inequality in \eqref{eq:affineSobolev}. We collect the properties of $\mathcal{E}_p$ that we need.

\begin{proposition}\label{prop:energy}
Let $p>1$, let $\Omega\subset\mathbb{R}^n$ be a bounded open set, and let $f\in W^{1,p}(\mathbb{R}^n)$, or $f\in D^{1,p}(\mathbb{R}^n)$ if $1<p<n$.
\begin{enumerate}[label=\textup{(\roman*)}]
\item For $A\in\mathrm{SL}(n)$, $x_0\in\mathbb{R}^n$ and $t>0$, one has $\mathcal{E}_p(|f|)=\mathcal{E}_p(f\circ A)=\mathcal{E}_p(f(\cdot+x_0))=\mathcal{E}_p(f)$ and $\mathcal{E}_p(f(\cdot/t))=t^{(n-p)/p}\,\mathcal{E}_p(f)$.
\item There are $t_p>0$, depending only on $p$, and $C=C(n,p,\Omega)>0$ such that, for every $g\in W^{1,p}_0(\Omega)$ and $\xi\in S^{n-1}$,
\[
\|\nabla_\xi g\|_p\geq t_p\,w(\Omega,\xi)^{-1}\|g\|_p\qquad\text{and}\qquad\mathcal{E}_p(g)\geq C\,\|g\|_p^{\frac{n-1}{n}}\|\nabla g\|_p^{\frac1n},
\]
where $w(\Omega,\xi)$ is the width of $\Omega$ in the direction $\xi$.
\item $\mathcal{E}_p$ is continuous for the norm topology on $W^{1,p}_0(\Omega)\setminus\{0\}$ and, if $1<p<n$, on $D^{1,p}(\mathbb{R}^n)\setminus\{0\}$, and it is sequentially weakly lower semicontinuous on $W^{1,p}_0(\Omega)$.
\item If $1<p<n$, then $S_{n,p}\|f\|_{p^*}\leq\mathcal{E}_p(f)$.
\end{enumerate}
\end{proposition}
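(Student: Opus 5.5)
The four items can be handled separately, and each reduces to elementary manipulations of the directional energies $\|\nabla_\xi f\|_p$.

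\emph{Item (i).} Since $\nabla|f|=\operatorname{sgn}(f)\nabla f$ a.e., we have $\|\nabla_\xi|f|\|_p=\|\nabla_\xi f\|_p$ for every $\xi$. Translations leave each $\|\nabla_\xi f\|_p$ unchanged. For $A\in\mathrm{SL}(n)$, the chain rule gives $\nabla_\xi(f\circ A)(x)=\langle\nabla f(Ax),A\xi\rangle$. The change of variables $y=Ax$ then yields $\|\nabla_\xi(f\circ A)\|_p=\|\nabla_{A\xi}f\|_p$.

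The affine energy is, up to the constant, $(n|\Pi^*_pf|)^{-1/n}$. Indeed, by homogeneity of degree one in $\xi$, $\int_{S^{n-1}}\|\nabla_\xi f\|_p^{-n}\,d\xi=n|\Pi^*_pf|$. Hence $\Pi^*_p(f\circ A)=A^{-1}\Pi^*_pf$, which has the same volume. For dilations, $\nabla_\xi[f(\cdot/t)]=t^{-1}(\nabla_\xi f)(\cdot/t)$. Its $L^p$ norm is $t^{(n-p)/p}\|\nabla_\xi f\|_p$, and this factor passes through the $(-n)$-th power mean unchanged.

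\emph{Item (ii).} The first inequality is a one-dimensional Poincar\'e inequality on lines. Assume by density that $g\in C^\infty_c(\Omega)$. On each line parallel to $\xi$, $g$ is supported in an interval of length at most $w(\Omega,\xi)$, and there we apply the 1D inequality $\|h\|_{L^p(I)}\leq|I|\,t_p^{-1}\|h'\|_{L^p(I)}$ for $h\in W^{1,p}_0(I)$. Any fixed valid constant suffices, for instance $t_p=1$ from $|h(s)|\leq\int_I|h'|$ and H\"older. Fubini then integrates this over the lines.

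For the second inequality, the first one gives $\|\nabla_\xi g\|_p^{-n}\leq t_p^{-n}\operatorname{diam}(\Omega)^n\|g\|_p^{-n}$, which alone only yields $\mathcal{E}_p(g)\gtrsim\|g\|_p$. To bring in $\|\nabla g\|_p$, use that $\Pi^*_pg$ is an origin-symmetric convex body. By the previous bound it is contained in the ball of radius $R=\operatorname{diam}(\Omega)\,t_p^{-1}\|g\|_p^{-1}$. By \eqref{eq:average}, some direction $\xi_0$ has $\|\nabla_{\xi_0}g\|_p\geq a_{n,p}^{1/p}\|\nabla g\|_p$, so the radial function of $\Pi^*_pg$ in the direction $\xi_0$ is at most $r=a_{n,p}^{-1/p}\|\nabla g\|_p^{-1}$.

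A symmetric convex body inside $B_R$ whose radius in one direction is at most $r$ lies in the slab $\{|\langle x,\xi_0\rangle|\leq r\}$. This holds because its support function in the direction $\xi_0$ equals its radial function there: if some point $y$ had $\langle y,\xi_0\rangle>r$, convexity together with symmetry would force a point on the $\xi_0$-axis beyond $r$. Hence $|\Pi^*_pg|\leq C\,rR^{n-1}$, and $\mathcal{E}_p(g)=c\,|\Pi^*_pg|^{-1/n}\geq C\|g\|_p^{(n-1)/n}\|\nabla g\|_p^{1/n}$. Verifying that supporting-hyperplane step is the only delicate point.

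\emph{Item (iii).} For continuity, take $f_j\to f\neq0$ in norm. The map $\xi\mapsto\|\nabla_\xi f_j\|_p$ converges uniformly on $S^{n-1}$. Moreover, $\|\nabla_\xi f\|_p>0$ for every $\xi$: if it vanished, $f$ would be constant along lines in direction $\xi$, which together with integrability ($L^p$, or $L^{p^*}$ in the $D^{1,p}$ case) forces $f=0$. By compactness the limit is bounded below by a positive constant, so the integrands $\|\nabla_\xi f_j\|_p^{-n}$ converge uniformly.

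For weak lower semicontinuity, take $g_j\rightharpoonup g$ in $W^{1,p}_0(\Omega)$. If $g=0$ there is nothing to prove. Otherwise $\nabla_\xi g_j\rightharpoonup\nabla_\xi g$ in $L^p$, so $\liminf_j\|\nabla_\xi g_j\|_p\geq\|\nabla_\xi g\|_p$ for each $\xi$. The first bound in (ii) gives $\|\nabla_\xi g_j\|_p\geq c\|g_j\|_p$, and $\|g_j\|_p\to\|g\|_p>0$ by Rellich. So the integrands $\|\nabla_\xi g_j\|_p^{-n}$ are uniformly bounded, and Fatou's lemma (equivalently, reverse Fatou applied to bounded functions) gives $\limsup_j\int\|\nabla_\xi g_j\|_p^{-n}\leq\int\|\nabla_\xi g\|_p^{-n}$, which is the claim.

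\emph{Item (iv).} This is the LYZ affine Sobolev inequality \eqref{eq:affineSobolev}, stated for $W^{1,p}$ in \cite{LYZ2002}. It extends to $D^{1,p}(\mathbb{R}^n)$ by density of $C^\infty_c(\mathbb{R}^n)$, using the continuity from (iii) (the case $f=0$ is trivial).

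I expect the main obstacle to be the second inequality in (ii), specifically the volume estimate through the slab argument.
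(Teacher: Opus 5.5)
\textbf{The flawed step: the slab in the second inequality of (ii).} Your claim that an origin-symmetric convex body $K\subseteq B_R$ with $\rho_K(\xi_0)\le r$ lies in the slab $\{|\langle x,\xi_0\rangle|\le r\}$ is false. The justification, that $h_K(\xi_0)=\rho_K(\xi_0)$, is also false; in general only $h_K(\xi_0)\ge\rho_K(\xi_0)$ holds. For a counterexample, take $0<\varepsilon<1$ and $K=\{x\in\mathbb{R}^2:\ |x_1-x_2|\le\varepsilon,\ |x_1+x_2|\le2\}$. Then $K\subseteq 2B^2$ and $\rho_K(e_1)=\varepsilon$, yet $(1,1)\in K$ and $\langle(1,1),e_1\rangle=1>\varepsilon$. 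Symmetry gives $-y$ together with $y$, but the segment $[-y,y]$ runs along the direction of $y$ and never reaches the $\xi_0$-axis away from the origin. So the "convexity plus symmetry" implication you sketch does not hold.

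\textbf{Repair.} The volume bound survives once the slab is placed correctly. Let $\nu$ be the unit outer normal of a supporting hyperplane of $K$ at the boundary point $\rho_K(\xi_0)\xi_0$. Then $h_K(\nu)=\rho_K(\xi_0)\langle\xi_0,\nu\rangle\le r$, and by symmetry $K\subseteq\{|\langle x,\nu\rangle|\le r\}$. Hence $|K|\le 2r\,\omega_{n-1}R^{n-1}$. Equivalently, you can use the boundary point nearest to the origin, as in the paper's proof of Theorem \ref{thm:potential}. With this change, your computation gives $\mathcal{E}_p(g)\ge C\|g\|_p^{(n-1)/n}\|\nabla g\|_p^{1/n}$, with $C$ depending on $n$, $p$ and $\operatorname{diam}\Omega$.

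\textbf{The remaining parts.} Parts (i), the norm continuity in (iii), and (iv) are correct and coincide with the paper. The paper proves continuity by the same uniform convergence of $\xi\mapsto\|\nabla_\xi f_j\|_p$, and gets (iv) from \cite{LYZ2002} by density. For (ii) and for the weak lower semicontinuity, the paper only cites \cite{HJM2021} (Lemma 1, Theorem 9, and the proof of Theorem 3). Your self-contained arguments are a genuine alternative to those citations. The one-dimensional Poincar\'e inequality on lines with Fubini is correct. So is the lower semicontinuity argument: the uniform lower bound on $\|\nabla_\xi g_j\|_p$ from (ii) and Rellich--Kondrachov, followed by reverse Fatou. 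That is the same mechanism the paper uses for minimizing sequences of the affine capacity.
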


Part (i) follows directly from \eqref{eq:Ep} (see \cite{LYZ2002}). Part (ii) is \cite[Lemma 1 and Theorem 9]{HJM2021}, stated there for $C^1$ functions supported in $\Omega$ and extended to $W^{1,p}_0(\Omega)$ by density. For the continuity in (iii), note that $\big|\|\nabla_\xi f\|_p-\|\nabla_\xi g\|_p\big|\leq\|\nabla f-\nabla g\|_p$ for every $\xi\in S^{n-1}$, so the directional energies converge uniformly on $S^{n-1}$ along a convergent sequence; if $f_j\to f\neq0$ in $D^{1,p}(\mathbb{R}^n)$, then $\|\nabla_\xi f_j\|_p\to\|\nabla_\xi f\|_p$ uniformly in $\xi\in S^{n-1}$, and therefore $\mathcal{E}_p(f_j)\to\mathcal{E}_p(f)$. The weak lower semicontinuity in (iii) is proved in \cite[proof of Theorem 3]{HJM2021} (see also \cite[Theorem 2.1]{LM2024ACV}). Part (iv) is the affine Sobolev inequality of \cite{LYZ2002}, which extends from $W^{1,p}(\mathbb{R}^n)$ to $D^{1,p}(\mathbb{R}^n)$ by density.

We recall that if  $f\in W^{1,p}_0(\Omega)\setminus\{0\}$ for a bounded open set $\Omega$, or $f\in D^{1,p}(\mathbb{R}^n)\setminus\{0\}$ with $1<p<n$, the function $\xi\mapsto\|\nabla_\xi f\|_p$ is a norm on $\mathbb{R}^n$. It is continuous and subadditive by Minkowski's inequality, and it does not vanish on $S^{n-1}$: in the first case by (ii), and in the second because a function in $D^{1,p}(\mathbb{R}^n)$ that is constant along all lines of a fixed direction vanishes. Hence $\Pi^*_pf$ is an origin-symmetric convex body, $\mathcal{E}_p(f)=c_{n,p}(n|\Pi^*_pf|)^{-1/n}>0$, and we may define
\begin{equation}\label{eq:Lf}
L_f=\Big(\frac{\omega_n}{|\Pi^*_pf|}\Big)^{1/n}\,\Pi^*_pf,\qquad |L_f|=\omega_n.
\end{equation}

The next lemma represents $\mathcal{E}_p^p$ as a minimum of anisotropic Dirichlet energies. For $L=L_f$ the identity in \eqref{eq:frozen} is contained in \cite[Lemmas 3 and 4]{HJM2016}.

\begin{lemma}\label{lem:frozen}
Let $f\in W^{1,p}_0(\Omega)\setminus\{0\}$ for a bounded open set $\Omega\subset\mathbb{R}^n$, or $f\in D^{1,p}(\mathbb{R}^n)\setminus\{0\}$ with $1<p<n$. For every star body $L$ with $|L|=\omega_n$,
\begin{equation}\label{eq:frozen}
\mathcal{E}_p(f)^p\leq\int_{\mathbb{R}^n}h\big(\Gamma_pL,\nabla f(x)\big)^p\,dx,
\end{equation}
with equality if and only if $L=L_f$. Moreover, $h(\Gamma_pL_f,\cdot)=H_f$, the norm in \eqref{eq:affineLaplacian}.
\end{lemma}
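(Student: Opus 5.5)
The plan is to compute the right-hand side of \eqref{eq:frozen} explicitly in terms of a dual mixed volume, and then apply \eqref{eq:dualmixed}. All integrands are nonnegative, so Tonelli's theorem lets us exchange the order of integration freely.

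\emph{Step 1: reduce to a dual mixed volume.} By the definition \eqref{eq:GammaPi} with $|L|=\omega_n$,
\[
\int_{\mathbb{R}^n}h(\Gamma_pL,\nabla f)^p\,dx=\frac{n+p}{n\,a_{n,p}\,\omega_n}\int_L\int_{\mathbb{R}^n}|\langle\nabla f(x),y\rangle|^p\,dx\,dy=\frac{n+p}{n\,a_{n,p}\,\omega_n}\int_L\|\nabla_yf\|_p^p\,dy .
\]
Write the inner integral in polar coordinates $y=r\xi$. Use the $1$-homogeneity of $y\mapsto\|\nabla_yf\|_p$ and the identity $\|\nabla_\xi f\|_p=\rho_{\Pi^*_pf}(\xi)^{-1}$; the latter holds because $\|\nabla_\xi f\|_p$ is the gauge of $\Pi^*_pf$, which is a convex body by the discussion preceding \eqref{eq:Lf}. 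Then
\[
\int_L\|\nabla_yf\|_p^p\,dy=\int_{S^{n-1}}\int_0^{\rho_L(\xi)}r^{n+p-1}\|\nabla_\xi f\|_p^p\,dr\,d\xi=\frac{n}{n+p}\,\widetilde V_{-p}(L,\Pi^*_pf).
\]
Hence the right-hand side of \eqref{eq:frozen} equals $(a_{n,p}\omega_n)^{-1}\widetilde V_{-p}(L,\Pi^*_pf)$.

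\emph{Step 2: apply the dual mixed volume inequality.} By \eqref{eq:dualmixed} and $|L|=\omega_n$, this quantity is at least $a_{n,p}^{-1}\omega_n^{p/n}|\Pi^*_pf|^{-p/n}$. Equality holds if and only if $L$ is a dilate of $\Pi^*_pf$. Since the volume of $L$ is fixed, this happens exactly when $L=L_f$ as defined in \eqref{eq:Lf}. It remains to check that this lower bound is $\mathcal{E}_p(f)^p$. From $\mathcal{E}_p(f)=c_{n,p}(n|\Pi^*_pf|)^{-1/n}$ and $c_{n,p}^p=(n\omega_n)^{p/n}a_{n,p}^{-1}$ we get
\[
\mathcal{E}_p(f)^p=\omega_n^{p/n}a_{n,p}^{-1}|\Pi^*_pf|^{-p/n},
\]
which is exactly the lower bound.

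\emph{Step 3: identify $h(\Gamma_pL_f,\cdot)$ with $H_f$.} Repeat the polar-coordinate computation with $\zeta$ in place of $\nabla f$:
\[
h(\Gamma_pL_f,\zeta)^p=\frac{1}{n\,a_{n,p}\,\omega_n}\int_{S^{n-1}}\rho_{L_f}(\xi)^{n+p}|\langle\zeta,\xi\rangle|^p\,d\xi .
\]
Here $\rho_{L_f}(\xi)=\lambda\|\nabla_\xi f\|_p^{-1}$ with $\lambda=(\omega_n/|\Pi^*_pf|)^{1/n}$. Comparing with the definition of $H_f$, it suffices to verify the constant identity
\[
c_{n,p}^{-n}\mathcal{E}_p(f)^{n+p}=\frac{\lambda^{n+p}}{n\,a_{n,p}\,\omega_n}.
\]
Using $c_{n,p}^{-n}\mathcal{E}_p(f)^n=(n|\Pi^*_pf|)^{-1}$ together with the formula for $\mathcal{E}_p(f)^p$ from Step 2, both sides equal
\[
\frac{\omega_n^{p/n}\,|\Pi^*_pf|^{-(n+p)/n}}{n\,a_{n,p}} .
\]

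\emph{Main obstacle.} No step is genuinely hard; the main risk is the normalization bookkeeping in Steps 2 and 3. The only structural input is that $\Pi^*_pf$ is a genuine star body, so that $\rho_{\Pi^*_pf}$ is positive and continuous and \eqref{eq:dualmixed} together with its equality case applies. This was established before \eqref{eq:Lf} via Proposition \ref{prop:energy}(ii) in the $W^{1,p}_0(\Omega)$ case, and via the triviality of $D^{1,p}$ functions constant along lines in the $D^{1,p}(\mathbb{R}^n)$ case.
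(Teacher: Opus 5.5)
Your proof is correct and follows the paper's argument. Both compute $\int_{\mathbb{R}^n}h(\Gamma_pL,\nabla f)^p\,dx=(\omega_n a_{n,p})^{-1}\widetilde V_{-p}(L,\Pi^*_pf)$ by polar coordinates and Fubini (you apply Tonelli before passing to polar coordinates rather than after), then invoke \eqref{eq:dualmixed} with its equality case; your Step 3 constant check for $h(\Gamma_pL_f,\cdot)=H_f$ is correct and makes explicit a normalization verification the paper's proof leaves implicit.
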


\begin{proof}
By \eqref{eq:GammaPi} and polar coordinates, $h(\Gamma_pL,\zeta)^p=\frac{1}{n\,a_{n,p}\,|L|}\int_{S^{n-1}}\rho_L(\xi)^{n+p}|\langle\zeta,\xi\rangle|^p\,d\xi$ for $\zeta\in\mathbb{R}^n$. Hence, by Fubini's theorem, $|L|=\omega_n$ and \eqref{eq:dualmixed},
\begin{align}
    \int_{\mathbb{R}^n}h(\Gamma_pL,\nabla f)^p\,dx&=\frac{1}{n\omega_n a_{n,p}}\int_{S^{n-1}}\rho_L(\xi)^{n+p}\,\|\nabla_\xi f\|_p^p\,d\xi\\
    &= \frac{1}{\omega_n a_{n,p}}\widetilde{V}_{-p}(L,\Pi^*_pf)\\
    &\geq a_{n,p}^{-1}\, \omega_n^{\frac{p}{n}}|\Pi^*_pf|^{-\frac{p}{n}} = \mathcal{E}_p(f)^p.
\end{align}
Equality holds if and only if $L$ and $\Pi_p^* f$ are dilates, that is, if and only if $L = L_f$.
\end{proof}

In particular, as observed in \cite{HJM2021}, $\Delta^{\mathcal{A}}_pu$ is the anisotropic $p$-Laplacian of $u$ with respect to the norm $h(\Gamma_pL_u,\cdot)$. Moreover, the anisotropic norms $H_f$ have the following regularity.

\begin{lemma}\label{lem:admissible}
Let $p>1$ and let $L$ be a star body. Then the norm $h(\Gamma_pL,\cdot)$ is strictly convex and of class $C^1$ on $\mathbb{R}^n\setminus\{0\}$.
\end{lemma}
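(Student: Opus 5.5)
Write $N(x)=h(\Gamma_pL,x)$. By \eqref{eq:GammaPi} and polar coordinates,
\[
N(x)^p=\int_{\mathbb{R}^n}|\langle x,y\rangle|^p\,d\mu(y),\qquad d\mu=\frac{n+p}{n\,a_{n,p}\,|L|}\chi_L\,dy .
\]
The measure $\mu$ is absolutely continuous with respect to Lebesgue measure. Its support $L$ contains a ball $B_r$ around the origin, because $\rho_L$ is positive and continuous, hence bounded below on $S^{n-1}$. The plan is to prove both properties directly from this integral representation, since $N$ is already known to be a norm, being the support function of an origin-symmetric convex body.

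\emph{Regularity.} The function $\phi(t)=|t|^p$ is $C^1$ on $\mathbb{R}$ for $p>1$, with $\phi'(t)=p|t|^{p-2}t$. For $x$ in a compact set and $y\in L$, the derivative $\nabla_x|\langle x,y\rangle|^p=p|\langle x,y\rangle|^{p-2}\langle x,y\rangle\,y$ is bounded, because $L$ is bounded. Differentiation under the integral sign is therefore justified by dominated convergence, and continuity of the resulting integral in $x$ follows the same way. This gives $N^p\in C^1(\mathbb{R}^n)$. Since $N>0$ on $\mathbb{R}^n\setminus\{0\}$, the formula $N=(N^p)^{1/p}$ shows that $N$ is $C^1$ away from the origin.

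\emph{Strict convexity.} Here strict convexity of the norm means the unit ball $\{N\le1\}$ is strictly convex: if $N(x)=N(z)=1$ with $x\neq z$, then $N(\tfrac{x+z}{2})<1$. I will prove the equivalent statement that $N$ is strictly convex along any segment not contained in a line through the origin, i.e. $N(x+z)<N(x)+N(z)$ whenever $x,z$ are not positively parallel.

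Write $N(x)=\|\langle x,\cdot\rangle\|_{L^p(\mu)}$. Minkowski's inequality in $L^p(\mu)$ gives $N(x+z)\le N(x)+N(z)$. Since $1<p<\infty$, equality forces $\langle x,\cdot\rangle=\lambda\langle z,\cdot\rangle$ $\mu$-a.e. for some $\lambda\ge0$ (or one of the functions vanishes a.e.). Two linear functions agreeing on the set $L$, which has positive measure and in fact contains the ball $B_r$, must be equal. Hence $x=\lambda z$, which gives the claim.

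In particular, if $N(x)=N(z)=1$ with $x\neq z$, then $x$ and $z$ are not positively parallel, because $x=\lambda z$ with $\lambda>0$ and equal norms would force $\lambda=1$. Therefore $N(\tfrac{x+z}{2})<1$.

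The only step needing care is this equality case in Minkowski's inequality together with the positivity of $\mu$ on an open set. Both are standard, so I expect no genuine obstacle.
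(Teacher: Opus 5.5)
Your proof is correct and follows the paper's argument: strict convexity comes from the equality case of Minkowski's inequality in $L^p(L)$ together with $L$ having nonempty interior, and differentiation under the integral sign gives $h(\Gamma_pL,\cdot)^p\in C^1(\mathbb{R}^n)$, hence $h(\Gamma_pL,\cdot)\in C^1$ away from the origin. The paper adds one remark: by Euler's identity, $\langle\nabla h(\Gamma_pL,x),x\rangle=h(\Gamma_pL,x)=1$ on $\partial(\Gamma_pL)^\circ$, so the gradient does not vanish there and $(\Gamma_pL)^\circ$ has $C^1$ boundary, which is the form in which the lemma is applied in Theorem~\ref{thm:Brasco}.
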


\begin{proof}
By \eqref{eq:GammaPi}, $h(\Gamma_pL,x)$ is a constant multiple of the $L^p(L)$ norm of the linear function $y\mapsto\langle x,y\rangle$. If $h(\Gamma_pL,x+x')=h(\Gamma_pL,x)+h(\Gamma_pL,x')$ with $x,x'\neq0$, the equality case of Minkowski's inequality, $1<p<\infty$, shows that $\langle x,\cdot\rangle$ and $\langle x',\cdot\rangle$ are positively proportional on $L$, which has non-empty interior; hence $x'$ is a positive multiple of $x$, and $(\Gamma_pL)^\circ$ is strictly convex. Differentiation under the integral sign, legitimate because $p>1$ and $L$ is bounded, shows that $h(\Gamma_pL,\cdot)^p$ is of class $C^1$ on $\mathbb{R}^n$, and hence $h(\Gamma_pL,\cdot)$ is of class $C^1$ on $\mathbb{R}^n\setminus\{0\}$. Since $\langle\nabla h(\Gamma_pL,x),x\rangle=h(\Gamma_pL,x)=1$ on $\partial(\Gamma_pL)^\circ$, the gradient does not vanish there, and $(\Gamma_pL)^\circ$ has $C^1$ boundary.
\end{proof}

Let $\Omega\subset\mathbb{R}^n$ be a bounded open set, $A\in\mathrm{SL}(n)$, $x_0\in\mathbb{R}^n$ and $t>0$. Proposition \ref{prop:energy}(i) and \eqref{eq:domination} give
\begin{equation}\label{eq:invariance}
\Lambda^{\mathcal{A}}_{p,q}(A\Omega+x_0)=\Lambda^{\mathcal{A}}_{p,q}(\Omega),\quad\Lambda^{\mathcal{A}}_{p,q}(t\Omega)=t^{n-p-\frac{np}{q}}\,\Lambda^{\mathcal{A}}_{p,q}(\Omega),\quad\Lambda^{\mathcal{A}}_{p,q}(\Omega)\leq\Lambda_{p,q}(\Omega),
\end{equation}
and consequently $T^{\mathcal{A}}_p(t\Omega)=t^{n+p'}\,T^{\mathcal{A}}_p(\Omega)$ and $T^{\mathcal{A}}_p(\Omega)\geq T_p(\Omega)$. Similarly, if $1<p<n$ and $K$ is compact, then $\operatorname{cap}^{\mathcal{A}}_p(AK+x_0)=\operatorname{cap}^{\mathcal{A}}_p(K)$, $\operatorname{cap}^{\mathcal{A}}_p(tK)=t^{n-p}\operatorname{cap}^{\mathcal{A}}_p(K)$ and $\operatorname{cap}^{\mathcal{A}}_p(K)\leq\operatorname{cap}_p(K)$, because $u\in\mathcal{W}(K)$ if and only if $u(A^{-1}(\cdot-x_0))\in\mathcal{W}(AK+x_0)$.

\subsection{Anisotropic functionals}\label{sec:aniso}
For an origin-symmetric convex body $Q$, an open set $\Omega$ of finite measure and $1\leq q<p^*$, we consider the anisotropic constants
\[
\Lambda^{Q}_{p,q}(\Omega)=\inf\Big\{\int_\Omega h_Q(\nabla v)^p\,dx\ :\ v\in W^{1,p}_0(\Omega),\ \|v\|_q=1\Big\},\qquad T^{Q}_p(\Omega)=\Lambda^{Q}_{p,1}(\Omega)^{-\frac1{p-1}},
\]
and, if $1<p<n$, the anisotropic capacity of a compact set $K$,
\[
\operatorname{cap}^{Q}_p(K)=\inf\Big\{\int_{\mathbb{R}^n}h_Q(\nabla u)^p\,dx\ :\ u\in\mathcal{W}(K)\Big\},
\]
where, as for $\operatorname{cap}_p$, the infimum may be taken over functions $\varphi\in C^\infty_c(\mathbb{R}^n)$ with $\varphi\geq1$ on $K$. Thus $\Lambda^{B^n}_{p,q}=\Lambda_{p,q}$, $T^{B^n}_p=T_p$ and $\operatorname{cap}^{B^n}_p=\operatorname{cap}_p$. The unit ball of the norm $h_Q$ is $Q^\circ$, and the extremal sets of the anisotropic problems below are the dilates of $Q$. Exchanging the infima over functions and over star bodies in Lemma \ref{lem:frozen}, we obtain, for every bounded open set $\Omega$, every $1\leq q<p^*$ and, if $1<p<n$, every compact set $K$,
\begin{equation}\label{eq:envelope}
\begin{gathered}
\Lambda^{\mathcal{A}}_{p,q}(\Omega)=\inf_L\Lambda^{\Gamma_pL}_{p,q}(\Omega),\qquad T^{\mathcal{A}}_p(\Omega)=\sup_LT^{\Gamma_pL}_p(\Omega),\\
\operatorname{cap}^{\mathcal{A}}_p(K)=\inf_L\operatorname{cap}^{\Gamma_pL}_p(K),
\end{gathered}
\end{equation}
where $L$ ranges over  star bodies of volume $\omega_n$.

\begin{lemma}\label{lem:Wulff}
Let $Q$ be an origin-symmetric convex body.
\begin{enumerate}[label=\textup{(\roman*)}]
\item For $1\leq q<p^*$,
\[
\Lambda^{Q}_{p,q}(Q)=\Big(\frac{|Q|}{\omega_n}\Big)^{1-\frac pq}\Lambda_{p,q}(B^n)\qquad\text{and}\qquad T^{Q}_p(Q)=\frac{|Q|}{\omega_n}\,T_p(B^n).
\]
Consequently, for $1<q<p^*$,
\begin{equation}\label{eq:Wulff}
T^{Q}_p(Q)^{(p-1)\sigma}\,\Lambda^{Q}_{p,q}(Q)=\Big(\frac{|Q|}{\omega_n}\Big)^{\theta}\kappa,\qquad\theta=\frac{p^2(q-1)}{q\,(n(p-1)+p)}>0,
\end{equation}
where $\kappa=T_p(B^n)^{(p-1)\sigma}\,\Lambda_{p,q}(B^n)$.
\item If $1<p<n$ and $K$ is compact, then $\operatorname{cap}^{Q}_p(K)\geq(|Q|/\omega_n)^{p/n}\operatorname{cap}_p(K^\star)$.
\item If $Q=AB^n$ with $A\in\mathrm{GL}(n)$ and $\det A>0$, then, for every open set $\Omega$ of finite measure, every $1\leq q<p^*$ and, if $1<p<n$, every compact set $K$,
\[
\begin{gathered}
\Lambda^{Q}_{p,q}(\Omega)=(\det A)^{1-\frac pq}\,\Lambda_{p,q}(A^{-1}\Omega),\qquad T^{Q}_p(\Omega)=(\det A)\,T_p(A^{-1}\Omega),\\
\operatorname{cap}^{Q}_p(K)=(\det A)\operatorname{cap}_p(A^{-1}K).
\end{gathered}
\]
In particular, if $A=tA_0$ with $t>0$ and $A_0\in\mathrm{SL}(n)$, then $\operatorname{cap}^{Q}_p(K)=t^p\operatorname{cap}_p(A_0^{-1}K)$.
\item If $\Omega$ is a bounded open set and $1\leq q<p^*$, then
\[
\Lambda^{Q}_{p,q}(\Omega)\geq\Big(\frac{|Q|}{\omega_n}\Big)^{\frac pn}\Lambda_{p,q}(\Omega^\star)\qquad\text{and}\qquad T^{Q}_p(\Omega)\leq\Big(\frac{|Q|}{\omega_n}\Big)^{-\frac{p}{n(p-1)}}T_p(\Omega^\star).
\]
\end{enumerate}
\end{lemma}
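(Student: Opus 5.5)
The four parts all follow from the change of variables $x=Ay$ together with the anisotropic P\'olya--Szeg\H{o} principle (convex symmetrization). I would prove (iii) first, since it is pure substitution, and then reduce the others to it or to Wulff-shape symmetrization.

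\textbf{Part (iii).} Let $Q=AB^n$. Then $h_Q(\zeta)=|A^T\zeta|$. For $v\in W^{1,p}_0(\Omega)$ put $w=v\circ A$, which lies in $W^{1,p}_0(A^{-1}\Omega)$. Since $\nabla w(y)=A^T\nabla v(Ay)$, the substitution $x=Ay$ gives
\[
\int_\Omega h_Q(\nabla v)^p\,dx=\det A\int_{A^{-1}\Omega}|\nabla w|^p\,dy,\qquad \|v\|_q=(\det A)^{1/q}\|w\|_q .
\]
Normalizing $\|v\|_q=1$ yields $\Lambda^Q_{p,q}(\Omega)=(\det A)^{1-p/q}\Lambda_{p,q}(A^{-1}\Omega)$. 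Taking $q=1$ and raising to the power $-1/(p-1)$ gives $T^Q_p(\Omega)=\det A\,T_p(A^{-1}\Omega)$. The capacity identity is the same computation, using that $u\in\mathcal{W}(K)$ if and only if $u\circ A\in\mathcal{W}(A^{-1}K)$. The final claim follows from the scaling $\operatorname{cap}_p(\lambda K)=\lambda^{n-p}\operatorname{cap}_p(K)$ applied with $A^{-1}K=t^{-1}A_0^{-1}K$, which gives $t^n\cdot t^{p-n}=t^p$.

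\textbf{Part (i).} For a general origin-symmetric $Q$, ellipsoid reduction is not available, so I would compute directly with the Wulff ansatz. Take the radial profile $u_R$ of the Euclidean extremal for $\Lambda_{p,q}(B^n)$ and set $v(x)=u_R(\|x\|_Q)$, where $\|\cdot\|_Q=h_{Q^\circ}$ is the dual norm of $h_Q$. Since $h_Q(\nabla\|x\|_Q)=1$ a.e., the coarea formula, combined with $|\{\|x\|_Q<r\}|=r^n|Q|$, shows that every integral of a function of $\|x\|_Q$ equals $|Q|/\omega_n$ times the corresponding Euclidean integral on $B^n$. This produces the upper bounds
\[
\Lambda^Q_{p,q}(Q)\leq \frac{|Q|/\omega_n}{(|Q|/\omega_n)^{p/q}}\,\Lambda_{p,q}(B^n).
\]
For the reverse inequality I would use the anisotropic P\'olya--Szeg\H{o} inequality (Alvino--Ferone--Trombetti--Lions): convex symmetrization with respect to $Q$ does not increase $\int h_Q(\nabla v)^p$, and it maps $W^{1,p}_0(\Omega)$ into functions of $\|x\|_Q$ on the Wulff set of volume $|\Omega|$. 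Together with the coarea identity above, this reduces the problem to the radial one-dimensional problem, giving equality. The torsion identity is the case $q=1$. Formula \eqref{eq:Wulff} is then pure arithmetic: the exponent is $(p-1)\sigma\cdot\frac{1}{1}\cdot$ (from $T$) plus $(1-p/q)$ (from $\Lambda$). Checking that $\sigma(p-1)+1-p/q=\theta$ is a routine computation with the definition of $\sigma$.

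\textbf{Parts (ii) and (iv).} These follow from convex symmetrization combined with part (i) and scaling. For $\Omega$ bounded, symmetrization with respect to $Q$ gives $\Lambda^Q_{p,q}(\Omega)\geq\Lambda^Q_{p,q}(\lambda Q)$ with $\lambda^n|Q|=|\Omega|$. By part (i) and the scaling $t^{n-p-np/q}$,
\[
\Lambda^Q_{p,q}(\lambda Q)=\Big(\frac{|Q|}{\omega_n}\Big)^{p/n}\Lambda_{p,q}(\Omega^\star),
\]
and the exponents can be checked by inserting $\lambda=(|\Omega|/|Q|)^{1/n}$. The torsion bound is the case $q=1$. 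Part (ii) is identical: the anisotropic isocapacitary inequality, obtained by symmetrizing the potential, gives $\operatorname{cap}^Q_p(K)\geq\operatorname{cap}^Q_p(\lambda Q)$. The Wulff computation gives $\operatorname{cap}^Q_p(Q)=(|Q|/\omega_n)\operatorname{cap}_p(\overline{B^n})$, using the profile $v_1(\|x\|_Q)$, and scaling finishes the argument.

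\textbf{Main obstacle.} I expect the main difficulty to be justifying the anisotropic P\'olya--Szeg\H{o} principle and the exactness of the Wulff computation for non-smooth $Q$. I would cite Alvino--Ferone--Trombetti--Lions for the principle, which holds for an arbitrary norm. For the coarea step I would use that $h_Q(\nabla\|x\|_Q)=1$ a.e., which is valid because $\|\cdot\|_Q$ is Lipschitz and $h_Q$ is its dual norm.
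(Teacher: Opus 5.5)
Your proposal is correct and follows essentially the paper's proof: (iii) by the substitution $x=Ay$ with $h_Q(\zeta)=|A^{\mathrm{T}}\zeta|$, and (i), (ii), (iv) by the convex symmetrization of Alvino--Ferone--Lions--Trombetti combined with the coarea identity for functions of $\|x\|_Q$ (using $h_Q(\nabla\|x\|_Q)=1$ a.e.) and scaling. One imprecision is in (ii): the test profile $v_1(\|x\|_Q)$ only gives the upper bound for $\operatorname{cap}^{Q}_p(Q)$, while the inequality needs the lower bound; you get it, as the paper does, by symmetrizing an admissible function to $g(\|x\|_Q)$ and noting that $g(|x|)$ is admissible for the corresponding Euclidean ball, which is the same two-sided argument you gave in (i).
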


\begin{proof}
(i) By the convex symmetrization of Alvino, Ferone, Lions and Trombetti \cite[Theorem 3.1]{AFLT1997}, for every nonnegative $v\in W^{1,p}_0(Q)$ the function equimeasurable with $v$ whose superlevel sets are dilates of $Q$ belongs to $W^{1,p}_0(Q)$ and has smaller or equal anisotropic energy $\int h_Q(\nabla\cdot)^p\,dx$. Hence the infimum defining $\Lambda^{Q}_{p,q}(Q)$ may be restricted to functions of the form $v(x)=g(\|x\|_Q)$, where $g$ is locally absolutely continuous on $(0,1]$ and $g(1)=0$. For such $v$, $h_Q(\nabla v(x))=|g'(\|x\|_Q)|$ for a.e. $x$, because $h_Q(\nabla\|x\|_Q)=1$ wherever the gauge is differentiable. Since $|\{\|x\|_Q<t\}|=t^n|Q|$, the coarea formula gives
\[
\int_Qh_Q(\nabla v)^p\,dx=n|Q|\int_0^1|g'(t)|^pt^{n-1}\,dt,\qquad\int_Q|v|^q\,dx=n|Q|\int_0^1|g(t)|^qt^{n-1}\,dt.
\]
The same identities hold for the Euclidean energy of $g(|x|)$ on $B^n$, with $|Q|$ replaced by $\omega_n$. Consequently $\Lambda^{Q}_{p,q}(Q)=(n|Q|)^{1-p/q}m$ and $\Lambda_{p,q}(B^n)=(n\omega_n)^{1-p/q}m$, where $m$ is the infimum of
\[
\frac{\int_0^1|g'(t)|^p\,t^{n-1}\,dt}{\big(\int_0^1|g(t)|^q\,t^{n-1}\,dt\big)^{p/q}}
\]
over all functions $g\not\equiv0$ that are locally absolutely continuous on $(0,1]$ with $g(1)=0$.
The case $q=1$ gives the second one (see also \cite[Lemma 6.2]{Brasco2014}). Finally, \eqref{eq:Wulff} follows from the identity $(p-1)\sigma+1-\frac pq=\theta$.

(ii) Let $\varphi\in C^\infty_c(\mathbb{R}^n)$ with $\varphi\geq1$ on $K$; replacing $\varphi$ by $\min\{\max\{\varphi,0\},1\}$, which does not increase the anisotropic energy, we may assume that $\varphi$ is Lipschitz, compactly supported and takes values in $[0,1]$. As above, its convex symmetrization $\varphi^{Q}$ satisfies $\int h_Q(\nabla\varphi^{Q})^p\leq\int h_Q(\nabla\varphi)^p$ by \cite[Theorem 3.1]{AFLT1997}. Moreover, $\varphi^{Q}\geq1$ on the dilate $sQ$ of $Q$ with $|sQ|=|K|$, because $|\{\varphi\geq1\}|\geq|K|$. As in (i), for a function $v(x)=g(\|x\|_Q)$ the coarea formula gives
\[
\int_{\mathbb{R}^n}h_Q(\nabla v)^p\,dx=\frac{|Q|}{\omega_n}\int_{\mathbb{R}^n}\big|\nabla\big(g(|x|)\big)\big|^p\,dx.
\]
Writing $\varphi^{Q}(x)=g(\|x\|_Q)$, the function $g(|x|)$ is admissible for $\overline{B}_s$, and therefore $\int h_Q(\nabla\varphi)^p\geq(|Q|/\omega_n)\operatorname{cap}_p(\overline{B}_s)$. Since $|\overline{B}_s|=\omega_n|K|/|Q|$, the scaling of the capacity gives
\[
\frac{|Q|}{\omega_n}\operatorname{cap}_p(\overline{B}_s)=\Big(\frac{|Q|}{\omega_n}\Big)^{p/n}\operatorname{cap}_p(K^\star),
\]
and (ii) follows by taking the infimum over $\varphi$.

(iii) We have $h_Q(\zeta)=|A^{\mathrm{T}}\zeta|$. If $v\in W^{1,p}_0(\Omega)$ and $w(y)=v(Ay)$, then $w\in W^{1,p}_0(A^{-1}\Omega)$ and $\nabla w(y)=A^{\mathrm{T}}\nabla v(Ay)$, and the substitution $x=Ay$ gives $\int_\Omega h_Q(\nabla v)^p\,dx=(\det A)\int|\nabla w|^p\,dy$ and $\int_\Omega|v|^q\,dx=(\det A)\int|w|^q\,dy$, which proves the first two identities. In the same way, $u\in\mathcal{W}(K)$ if and only if $u\circ A\in\mathcal{W}(A^{-1}K)$, which proves the third one. If $A=tA_0$, then $\det A=t^n$ and $\operatorname{cap}_p(A^{-1}K)=t^{p-n}\operatorname{cap}_p(A_0^{-1}K)$ by the scaling of the capacity.

(iv) Since $h_Q$ is even, the infimum defining $\Lambda^{Q}_{p,q}(\Omega)$ may be taken over nonnegative functions. Let $s>0$ satisfy $|sQ|=|\Omega|$, and $v\in W^{1,p}_0(\Omega)$ be a non-negative function. The convex symmetrization of $v$ belongs to $W^{1,p}_0(sQ)$, has the same $L^q$ norm and has smaller or equal anisotropic energy by \cite[Theorem 3.1]{AFLT1997}. Hence $\Lambda^{Q}_{p,q}(\Omega)\geq\Lambda^{Q}_{p,q}(sQ)=s^{\,n-p-np/q}\,\Lambda^{Q}_{p,q}(Q)$. If $\rho$ is the radius of $\Omega^\star$, then $s^n|Q|=\rho^n\omega_n$, and by (i) and the scaling $\Lambda_{p,q}(\rho B^n)=\rho^{\,n-p-np/q}\,\Lambda_{p,q}(B^n)$ the right-hand side equals $(|Q|/\omega_n)^{p/n}\Lambda_{p,q}(\Omega^\star)$. The case $q=1$ gives the inequality for $T^{Q}_p$.
\end{proof}

\subsection{The case \texorpdfstring{$p=2$}{p=2}}\label{sec:p2}
For $p=2$ the anisotropic norms $H_f$ are Euclidean norms after a linear change of variables, and the affine quantities become extremal values of Euclidean ones. In this case the constant in \cite[Theorem 1.2]{HuangLi2016} equals $1$, so that the identity $\mathcal{E}_2(f)^2=\min_{A\in\mathrm{SL}(n)}\|\nabla(f\circ A)\|_2^2$ in the following proposition is contained in the work of Huang and Li \cite[Theorems 1.1 and 1.2]{HuangLi2016}. The form $n\det(Q_f)^{1/n}$, and its use to write variational problems for the affine Laplacian as minimizations over pairs $(v,A)$, appear in the work of Schindler and Tintarev \cite[Lemma 2.1 and Corollary 2.2]{SchindlerTintarev2018}. 

\begin{proposition}\label{prop:p2}
Let $f\in W^{1,2}_0(\Omega)\setminus\{0\}$ for a bounded open set $\Omega$, or $f\in D^{1,2}(\mathbb{R}^n)\setminus\{0\}$ with $n\geq3$, and let $Q_f=\int_{\mathbb{R}^n}\nabla f\otimes\nabla f\,dx$. Then
\[
\mathcal{E}_2(f)^2=n\det(Q_f)^{1/n}=\min_{A\in\mathrm{SL}(n)}\|\nabla(f\circ A)\|_2^2,
\]
and the minimum is attained exactly when $AA^{\mathrm{T}}=\det(Q_f)^{1/n}Q_f^{-1}$. Consequently, for every bounded open set $\Omega$ and every $1\leq q<2^*$,
\[
\Lambda^{\mathcal{A}}_{2,q}(\Omega)=\min_{A\in\mathrm{SL}(n)}\Lambda_{2,q}(A\Omega),\qquad T^{\mathcal{A}}_2(\Omega)=\max_{A\in\mathrm{SL}(n)}T_2(A\Omega),
\]
and, if $n\geq3$, $\operatorname{cap}^{\mathcal{A}}_2(K)=\inf_{A\in\mathrm{SL}(n)}\operatorname{cap}_2(AK)$ for every compact set $K$, with the infimum attained if $|K|>0$.
\end{proposition}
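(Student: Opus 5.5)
We need to prove Proposition \ref{prop:p2}. The plan is to begin with a direct computation of the affine energy for $p=2$ and then pass to the variational quantities through Lemma \ref{lem:frozen} and Lemma \ref{lem:Wulff}(iii).

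\textbf{The energy identity.} For $p=2$ we have $\|\nabla_\xi f\|_2^2=\langle Q_f\xi,\xi\rangle$, so $\Pi^*_2f=\{\xi:\langle Q_f\xi,\xi\rangle\le1\}$ is an ellipsoid. Note that $Q_f$ is positive definite, since $\|\nabla_\xi f\|_2>0$ on $S^{n-1}$ by the remark preceding \eqref{eq:Lf}. Hence
\[
|\Pi^*_2f|=\omega_n\det(Q_f)^{-1/2},
\]
and $\mathcal{E}_2(f)=c_{n,2}(n|\Pi^*_2f|)^{-1/n}$ gives $\mathcal{E}_2(f)^2=c_{n,2}^2(n\omega_n)^{-2/n}\det(Q_f)^{1/n}$. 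Using $a_{n,2}=1/n$, this equals $n\det(Q_f)^{1/n}$.

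For the minimum, observe that $\|\nabla(f\circ A)\|_2^2=\operatorname{tr}(A^{\mathrm T}Q_fA)=\operatorname{tr}(Q_fAA^{\mathrm T})$. By the AM--GM inequality on eigenvalues, this is at least $n\det(Q_fAA^{\mathrm T})^{1/n}=n\det(Q_f)^{1/n}$. Equality holds if and only if $Q_fAA^{\mathrm T}$ is a scalar multiple of the identity, that is, $AA^{\mathrm T}=\det(Q_f)^{1/n}Q_f^{-1}$. Such matrices $A$ exist with $\det A=1$, so the minimum is attained.

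\textbf{Consequences for $\Lambda$ and $T$.} For any admissible $v$ and any $A\in\mathrm{SL}(n)$, the function $v\circ A$ lies in $W^{1,2}_0(A^{-1}\Omega)$ with the same $L^q$ norm. The energy identity then gives
\[
\Lambda^{\mathcal{A}}_{2,q}(\Omega)=\inf_v\inf_A\|\nabla(v\circ A)\|_2^2=\inf_A\Lambda_{2,q}(A^{-1}\Omega),
\]
after swapping the two infima. What remains is to show that this infimum over $A$ is attained. I would attain it by taking a minimizer $v$ of $\Lambda^{\mathcal{A}}_{2,q}(\Omega)$, whose existence is announced for Section \ref{sec:existence}. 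Alternatively, one can argue directly: the map $A\mapsto\Lambda_{2,q}(A\Omega)$ blows up as $A$ degenerates, because some width of $A\Omega$ goes to $0$ and Proposition \ref{prop:energy}(ii) applies. Continuity in $A$ together with this coercivity then yields a minimizer. The statement for $T_2$ follows by taking $q=1$.

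\textbf{The capacity.} The identity $\operatorname{cap}^{\mathcal{A}}_2(K)=\inf_A\operatorname{cap}_2(AK)$ follows in the same way, using $u\in\mathcal{W}(K)\iff u\circ A\in\mathcal{W}(A^{-1}K)$. For attainment when $|K|>0$, I would use the minimizer $u$ of Theorem \ref{thm:potential}, taking $A$ to be the optimal matrix for $u$; then $\operatorname{cap}_2(A^{-1}K)\le\|\nabla(u\circ A)\|_2^2=\mathcal{E}_2(u)^2=\operatorname{cap}^{\mathcal{A}}_2(K)$. To avoid circularity with results proved later, I would instead argue by compactness. Since $|K|>0$, any set $A^{-1}K$ with $A$ degenerate in $\mathrm{SL}(n)$ is very elongated. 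Its capacity then tends to infinity: it contains pieces of large diameter, and the capacity dominates a power of the diameter via a lower bound by the capacity of a segment-like set of fixed measure. Combined with continuity of $A\mapsto\operatorname{cap}_2(AK)$, this gives attainment.

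\textbf{Main obstacle.} The step I expect to be hardest is this coercivity argument for attainment, particularly for the capacity. The fallback is to invoke the existence of minimizers (Theorem \ref{thm:potential} and the existence result for $\Lambda^{\mathcal{A}}_{p,q}$), whose proofs do not rely on this proposition.
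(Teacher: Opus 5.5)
Your proof is correct. For the consequences it follows the paper's argument: swap the infima over $v$ and $A$, then obtain attainment by applying the identity to an extremal given by Proposition \ref{prop:attain} or Theorem \ref{thm:potential}, with $A$ chosen optimal for that extremal.

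Where you differ is the identity itself. The paper does not prove it. It cites Huang and Li for $\mathcal{E}_2(f)^2=\min_{A}\|\nabla(f\circ A)\|_2^2$, noting that their sharp constant $m_{n,2}$ equals $1$, and Schindler and Tintarev for the form $n\det(Q_f)^{1/n}$. You derive both directly, from three facts:
\begin{enumerate}
\item $\Pi^*_2f$ is the ellipsoid $\{\langle Q_f\xi,\xi\rangle\le1\}$;
\item the normalization $a_{n,2}=1/n$;
\item AM--GM applied to $\operatorname{tr}(Q_fAA^{\mathrm T})$.
\end{enumerate}
This is self-contained, makes the normalization check explicit, and gives the equality case $AA^{\mathrm T}=\det(Q_f)^{1/n}Q_f^{-1}$ at no extra cost. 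The citation buys only brevity.

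Two remarks on attainment. First, there is no circularity to avoid. The proofs of Proposition \ref{prop:attain} and Theorem \ref{thm:potential} do not use this proposition, so your ``fallback'' is the argument and should be presented as such.

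Second, your compactness sketch for the capacity is unsound as written. Among sets of fixed volume, capacity is not bounded below by any increasing function of the diameter: two balls of fixed radius drifting apart keep bounded capacity while their diameter blows up. Coercivity does hold for linear images of a fixed $K$, but the clean proof goes through Lemma \ref{lem:slicing}. Let $v$ be the Euclidean potential of $A^{-1}K$ and set $w=v\circ A^{-1}\in\mathcal{W}(K)$. Along a sequence with bounded capacity, $\|w\|_{2^*}$ is bounded, so $Q_w\ge c_K I$. Hence $\operatorname{cap}_2(A^{-1}K)=\operatorname{tr}(A^{\mathrm T}Q_wA)\ge c_K\operatorname{tr}(A^{\mathrm T}A)$. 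This is exactly the ingredient behind Theorem \ref{thm:potential}, so nothing is gained over citing that theorem.

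Your width argument for $\Lambda_{2,q}$ does work. However, it needs the interpolation step from the proof of Proposition \ref{prop:attain} when $q>2$.
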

The formulas for $\Lambda^{\mathcal{A}}_{2,q}$, $T^{\mathcal{A}}_2$ and $\operatorname{cap}^{\mathcal{A}}_2$ follow by applying the identity to extremals, which exist by Proposition \ref{prop:attain} and Theorem \ref{thm:potential}.

\section{Existence of extremals}\label{sec:existence}

To prove the existence of extremals we first need lower bounds for the directional energies $\|\nabla_\xi f\|_p$ along minimizing sequences, uniform in $\xi\in S^{n-1}$. On a bounded domain it follows from the width of $\Omega$ (Proposition \ref{prop:energy}(ii)).  For the affine capacity of a compact set $K$ they follow from an estimate along the lines that meet $K$ (Lemma \ref{lem:slicing}).

For $p\geq n$ the affine Sobolev inequality is not available at the exponent $p$; the following lemma allows us to use it at a smaller exponent.

\begin{lemma}\label{lem:lower}
Let $\Omega\subset\mathbb{R}^n$ be a bounded open set and $1<r<p$. For every $f\in W^{1,p}_0(\Omega)$,
\[
\mathcal{E}_r(f)\leq\frac{c_{n,r}}{c_{n,p}}\,|\Omega|^{\frac1r-\frac1p}\,\mathcal{E}_p(f).
\]
\end{lemma}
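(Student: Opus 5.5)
The plan is a direct pointwise comparison of the directional energies, followed by integration over the sphere; no structure beyond Hölder's inequality is needed.

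First, I check that $\mathcal{E}_r(f)$ makes sense. Since $\Omega$ is bounded and $r<p$, Hölder's inequality gives $W^{1,p}_0(\Omega)\subseteq W^{1,r}_0(\Omega)$. Moreover, $\nabla f=0$ a.e. outside $\Omega$, because $f$ is the limit in $W^{1,p}(\mathbb{R}^n)$ of functions in $C^\infty_c(\Omega)$. If $f=0$, both sides vanish and there is nothing to prove, so assume $f\neq0$. Then, by the discussion after Proposition \ref{prop:energy}, both $\xi\mapsto\|\nabla_\xi f\|_p$ and $\xi\mapsto\|\nabla_\xi f\|_r$ are norms that do not vanish on $S^{n-1}$.

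Second, fix $\xi\in S^{n-1}$. Since $\nabla_\xi f$ is supported in $\Omega$, Hölder's inequality with exponents $p/r$ and $p/(p-r)$ gives
\[
\|\nabla_\xi f\|_r\leq|\Omega|^{\frac1r-\frac1p}\,\|\nabla_\xi f\|_p .
\]
Raising to the power $-n$ reverses the inequality:
\[
\|\nabla_\xi f\|_r^{-n}\geq|\Omega|^{-n(\frac1r-\frac1p)}\|\nabla_\xi f\|_p^{-n}.
\]
Integrating over $S^{n-1}$ and raising to the power $-1/n$ reverses it once more, which yields
\[
\Big(\int_{S^{n-1}}\|\nabla_\xi f\|_r^{-n}d\xi\Big)^{-1/n}\leq|\Omega|^{\frac1r-\frac1p}\Big(\int_{S^{n-1}}\|\nabla_\xi f\|_p^{-n}d\xi\Big)^{-1/n}.
\]

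Finally, I multiply both sides by $c_{n,r}$ and rewrite the right-hand side using $c_{n,p}^{-1}\mathcal{E}_p(f)$, as given by \eqref{eq:Ep}. This gives exactly the claimed bound.

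There is no real obstacle here. The only points that need care are that $\nabla f$ vanishes a.e. off $\Omega$, so that Hölder's inequality may be applied on $\Omega$, and that the two sign reversals (the power $-n$ and then the power $-1/n$) combine correctly.
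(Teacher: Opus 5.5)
Your proposal is correct and follows the paper's proof exactly: Hölder's inequality on $\Omega$ gives $\|\nabla_\xi f\|_r\leq|\Omega|^{\frac1r-\frac1p}\|\nabla_\xi f\|_p$ for each $\xi\in S^{n-1}$, and raising to the power $-n$, integrating over $S^{n-1}$ and rescaling by the constants $c_{n,r}$ and $c_{n,p}$ yields the claim. The extra checks you include (that $\nabla f=0$ a.e.\ off $\Omega$, and that the directional energies are positive so the sign reversals are legitimate) are harmless additions.
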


\begin{proof}
Since $\nabla_\xi f$ vanishes outside $\Omega$, H\"older's inequality gives $\|\nabla_\xi f\|_r\leq|\Omega|^{\frac1r-\frac1p}\|\nabla_\xi f\|_p$ for every $\xi\in S^{n-1}$. Raising to the power $-n$ and integrating over $S^{n-1}$ gives the claim.
\end{proof}

\begin{proposition}\label{prop:attain}
Let $p>1$, $1\leq q<p^*$, and let $\Omega\subset\mathbb{R}^n$ be a bounded open set. Then $\Lambda^{\mathcal{A}}_{p,q}(\Omega)>0$, and it is attained by a nonnegative function.
\end{proposition}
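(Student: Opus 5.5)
We use the direct method on a minimizing sequence, controlled by the coercivity estimate of Proposition \ref{prop:energy}(ii) and a compact embedding.

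\emph{Positivity.} Let $v\in W^{1,p}_0(\Omega)$ with $\|v\|_q=1$. Proposition \ref{prop:energy}(ii) gives $\mathcal{E}_p(v)\geq C\|v\|_p^{(n-1)/n}\|\nabla v\|_p^{1/n}$. By the Poincar\'e inequality, $\|\nabla v\|_p\geq c\|v\|_p$, so $\mathcal{E}_p(v)\geq C'\|v\|_p$. It remains to bound $\|v\|_p$ below in terms of $\|v\|_q=1$.
\begin{itemize}
\item If $q\leq p$, Hölder's inequality on the bounded set $\Omega$ gives $\|v\|_p\geq|\Omega|^{1/p-1/q}$, and we are done.
\item If $p<q<p^*$, we interpolate. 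Choose $\lambda\in(0,1)$ with $\|v\|_q\leq\|v\|_p^{1-\lambda}\|v\|_s^{\lambda}$, where $s$ is the Sobolev exponent. Combined with the Sobolev inequality, this gives $\|v\|_q\leq C\|v\|_p^{1-\lambda}\|\nabla v\|_p^{\lambda}$.
\end{itemize}
Hence a lower bound on $\mathcal{E}_p$ follows once the exponents balance, which is cleanest via the affine Sobolev inequality. For $1<p<n$, Proposition \ref{prop:energy}(iv) gives $\|v\|_{p^*}\leq S_{n,p}^{-1}\mathcal{E}_p(v)$. Then $1=\|v\|_q\leq\|v\|_p^{1-\lambda}\|v\|_{p^*}^{\lambda}\leq C\,\mathcal{E}_p(v)^{1-\lambda}\mathcal{E}_p(v)^{\lambda}$, so $\mathcal{E}_p(v)\geq c>0$.

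For $p\geq n$ we use Lemma \ref{lem:lower}. Given $q<\infty$, pick $r<n$ with $r<p$ and $r^*=nr/(n-r)>q$. Then
\[
\|v\|_{r^*}\leq S_{n,r}^{-1}\mathcal{E}_r(v)\leq C\,\mathcal{E}_p(v),
\]
and the same interpolation between $L^p$ (or Hölder if $q\le p$) and $L^{r^*}$ concludes $\Lambda^{\mathcal{A}}_{p,q}(\Omega)>0$.

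\emph{Existence.} Take a minimizing sequence $v_j$ with $\|v_j\|_q=1$ and $\mathcal{E}_p(v_j)^p\to\Lambda^{\mathcal{A}}_{p,q}(\Omega)$. By Proposition \ref{prop:energy}(i) we may replace $v_j$ by $|v_j|$, so each $v_j$ is nonnegative.

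The main obstacle is that $\mathcal{E}_p$ does not control $\|\nabla v\|_p$ linearly, so boundedness of $v_j$ in $W^{1,p}_0$ is not immediate. We address it with the inequality $\mathcal{E}_p(v)\geq C\|v\|_p^{(n-1)/n}\|\nabla v\|_p^{1/n}$. Since $\|v_j\|_p\geq c>0$ by the argument above, we get
\[
\|\nabla v_j\|_p\leq C\,\mathcal{E}_p(v_j)^n\,\|v_j\|_p^{-(n-1)}\leq C'.
\]
Thus $(v_j)$ is bounded in $W^{1,p}_0(\Omega)$.

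Passing to a subsequence, $v_j\rightharpoonup v$ weakly in $W^{1,p}_0(\Omega)$. By Rellich--Kondrachov, $v_j\to v$ strongly in $L^q(\Omega)$, since $q<p^*$ (for $p\geq n$ any finite $q$ is allowed). Hence $\|v\|_q=1$ and $v\geq0$, so in particular $v\neq0$. Weak lower semicontinuity, Proposition \ref{prop:energy}(iii), gives $\mathcal{E}_p(v)^p\leq\liminf\mathcal{E}_p(v_j)^p=\Lambda^{\mathcal{A}}_{p,q}(\Omega)$. So $v$ is a nonnegative minimizer.
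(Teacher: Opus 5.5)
Your proposal is correct and follows the paper's proof essentially step by step: a lower bound on $\|v_k\|_p$ via H\"older ($q\leq p$) or via interpolation against the affine Sobolev inequality (at exponent $p$, or at some $r<n$ combined with Lemma \ref{lem:lower} when $p\geq n$), then boundedness in $W^{1,p}_0(\Omega)$ from Proposition \ref{prop:energy}(ii), Rellich--Kondrachov, and weak lower semicontinuity. The only difference is organizational: you prove $\Lambda^{\mathcal{A}}_{p,q}(\Omega)>0$ first, as a uniform lower bound on the constraint set, whereas the paper deduces it at the end from $\mathcal{E}_p(u)>0$ for the nonzero minimizer $u$.
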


For $q=p$ this is \cite[Theorem 3]{HJM2021}. For $1\leq q<p$ it also follows from \cite[Theorem 4]{LM2024CV} and, for $p<n$, from \cite[Theorem 4.2]{LR2025}; for $p<q<p^*$ and $p<n$ it is contained in \cite[Corollary 1.1]{LM2024ACV}; for $p=2$ and $2<q<2^*$ see also \cite[Theorem 3.7]{SchindlerTintarev2018}. The following argument covers all cases, including $p\geq n$ and $q>p$.

\begin{proof}
Let $(v_k)\subset W^{1,p}_0(\Omega)$ be a minimizing sequence with $\|v_k\|_q=1$; replacing $v_k$ by $|v_k|$ we may assume $v_k\geq0$. We claim that $\inf_k\|v_k\|_p>0$. If $q\leq p$, this follows from H\"older's inequality, $1=\|v_k\|_q\leq|\Omega|^{\frac1q-\frac1p}\|v_k\|_p$. If $q>p$, choose $s\in(q,\infty)$ as follows: $s=p^*$ if $p<n$, and $s=r^*=nr/(n-r)$ for some $r\in(1,n)$ with $r^*>q$ if $p\geq n$. By the affine Sobolev inequality \eqref{eq:affineSobolev} at the exponent $p$, respectively at the exponent $r$ combined with Lemma \ref{lem:lower}, the sequence $(v_k)$ is bounded in $L^s$. Writing $\frac1q=\frac\lambda p+\frac{1-\lambda}{s}$ with $\lambda\in(0,1)$, the interpolation inequality $1=\|v_k\|_q\leq\|v_k\|_p^\lambda\|v_k\|_s^{1-\lambda}$ proves the claim. Proposition \ref{prop:energy}(ii) now shows that $(v_k)$ is bounded in $W^{1,p}_0(\Omega)$. By the Rellich--Kondrachov theorem, a subsequence converges weakly in $W^{1,p}_0(\Omega)$ and strongly in $L^q(\Omega)$ to a function $u\geq0$ with $\|u\|_q=1$, and by Proposition \ref{prop:energy}(iii), $\mathcal{E}_p(u)^p\leq\liminf_k\mathcal{E}_p(v_k)^p=\Lambda^{\mathcal{A}}_{p,q}(\Omega)$. Hence $u$ is a minimizer, and $\Lambda^{\mathcal{A}}_{p,q}(\Omega)=\mathcal{E}_p(u)^p>0$ by Proposition \ref{prop:energy}(ii).
\end{proof}

For the affine capacity, the width bound is replaced by the following estimate.

\begin{lemma}\label{lem:slicing}
Let $K\subset\mathbb{R}^n$ be a compact set with $|K|>0$, and let $f\in D^{1,p}(\mathbb{R}^n)$ satisfy $f=1$ a.e. on $K$ and $\|f\|_{p^*}\leq m$. Then, for every $\xi\in S^{n-1}$,
\[
\|\nabla_\xi f\|_p^p\geq c_K=2^{-p}\big(2^{p^*}m^{p^*}\big)^{1-p}\Big(\frac{|K|}{\operatorname{diam}K}\Big)^p.
\]
\end{lemma}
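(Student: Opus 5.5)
The plan is to integrate along lines parallel to $\xi$ and compare, on each line, the drop of $f$ from the value $1$ (attained on $K$) to small values (forced by $\|f\|_{p^*}\le m$). Fix $\xi\in S^{n-1}$ and write $x=y+t\xi$ with $y\in\xi^\perp$, $t\in\mathbb{R}$. Since $f\in D^{1,p}(\mathbb{R}^n)\subset W^{1,p}_{\mathrm{loc}}$, we take its ACL representative. Then for $\mathcal{H}^{n-1}$-a.e. $y\in\xi^\perp$:
\begin{itemize}
\item $t\mapsto f(y+t\xi)$ is locally absolutely continuous, with derivative $\nabla_\xi f(y+t\xi)$;
\item it lies in $L^{p^*}(\mathbb{R})$;
\item it equals $1$ for a.e. $t$ in the slice $K_y=\{t:y+t\xi\in K\}$, by Fubini applied to $\{f\neq1\}\cap K$.
\end{itemize}
Let $P=\{y\in\xi^\perp:\ \mathcal{H}^1(K_y)>0\}$. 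Since $\mathcal{H}^1(K_y)\le\operatorname{diam}K$, Fubini gives
\[
|K|=\int_P\mathcal{H}^1(K_y)\,dy\le\operatorname{diam}K\cdot\mathcal{H}^{n-1}(P),
\]
so $\mathcal{H}^{n-1}(P)\ge|K|/\operatorname{diam}K$.

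The key one-dimensional step is the following. For a good $y\in P$, set $\ell(y)=\mathcal{H}^1(\{t:\ |f(y+t\xi)|\ge1/2\})$, which is finite because the restriction lies in $L^{p^*}$. Consider the truncation $g=\min\{\max\{|f|,1/2\},1\}$ along the line. It equals $1$ at some $t_0$ (a point of $K_y$ where $f=1$). It equals $1/2$ somewhere in $[t_0,t_0+\ell(y)+1]$, since $\{|f|\ge1/2\}$ has measure $\ell(y)$. Its derivative is supported in $\{|f|\ge1/2\}$ and bounded by $|\nabla_\xi f|$. Hence, by absolute continuity and H\"older's inequality on this set,
\[
\tfrac12\le\int_{\{|f|\ge1/2\}}|\nabla_\xi f|\,dt\le\ell(y)^{1-\frac1p}\Big(\int_{\mathbb{R}}|\nabla_\xi f(y+t\xi)|^p\,dt\Big)^{1/p},
\]
that is,
\[
\int_{\mathbb{R}}|\nabla_\xi f(y+t\xi)|^p\,dt\ge2^{-p}\ell(y)^{1-p}.
\]
In particular $\ell(y)>0$ on $P$.

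It remains to integrate over $P$. Since $s\mapsto s^{1-p}$ is convex and decreasing, Jensen's inequality on $P$ gives
\[
\int_P\ell^{1-p}\,dy\ge\mathcal{H}^{n-1}(P)^p\Big(\int_P\ell\,dy\Big)^{1-p}.
\]
By Fubini and Chebyshev's inequality,
\[
\int_P\ell\,dy\le\big|\{|f|\ge1/2\}\big|\le2^{p^*}\|f\|_{p^*}^{p^*}\le2^{p^*}m^{p^*},
\]
and because $1-p<0$ this upper bound may be inserted. Combining the three displays,
\[
\|\nabla_\xi f\|_p^p\ge\int_P\int_{\mathbb{R}}|\nabla_\xi f|^p\,dt\,dy\ge2^{-p}\big(2^{p^*}m^{p^*}\big)^{1-p}\Big(\frac{|K|}{\operatorname{diam}K}\Big)^p=c_K,
\]
which is exactly the stated constant.

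The main obstacle is the measure-theoretic justification of the line-by-line facts. We must pass from $f=1$ a.e. on $K$ to $f=1$ at some point of $K_y$ for a.e. $y\in P$, using the ACL representative and the fact that $\mathcal{H}^1(K_y)>0$. We must also check that truncation commutes with restriction to lines, so that the one-dimensional fundamental theorem of calculus applies to $g$. The remaining steps are elementary.
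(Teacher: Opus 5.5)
Your proof is correct and follows the paper's argument. The common steps are: slicing along lines parallel to $\xi$; bounding the drop from $1$ to $1/2$ on each line by H\"older against the length $\ell$ of the superlevel set; Jensen for $t\mapsto t^{1-p}$; Chebyshev with $\|f\|_{p^*}\le m$; and $|K|\le|G|\operatorname{diam}K$. The differences are cosmetic: you work with $|f|$ and the truncation $g$ and apply H\"older on the level set itself, whereas the paper uses $f$ directly and applies H\"older on an interval $[\tau_0,\tau_1]$ of length at most $\ell(x')$.
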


\begin{proof}
Fix $\xi\in S^{n-1}$, write $x=x'+\tau\xi$ with $x'\in\xi^\perp$, and let $g_{x'}(\tau)=f(x'+\tau\xi)$. For a.e. $x'$, the function $g_{x'}$ is locally absolutely continuous with $g_{x'}'(\tau)=\nabla_\xi f(x'+\tau\xi)$ for a.e. $\tau$, and $g_{x'}=1$ a.e. on $\{\tau:x'+\tau\xi\in K\}$; by Fubini's theorem, $\|\nabla_\xi f\|_p^p=\int_{\xi^\perp}\int_{\mathbb{R}}|g_{x'}'|^p\,d\tau\,dx'$. Let $\ell_K(x')$ and $\ell(x')$ be the lengths of $\{\tau:x'+\tau\xi\in K\}$ and of $\{\tau:g_{x'}(\tau)\geq\frac12\}$, and let $G=\{x'\in\xi^\perp:\ell_K(x')>0\}$. Since $\ell_K\leq\operatorname{diam}K$, we have $|K|=\int_G\ell_K\,dx'\leq|G|\operatorname{diam}K$, where $|G|$ is the $(n-1)$-dimensional measure of $G$; moreover $\int_{\xi^\perp}\ell\,dx'=|\{f\geq\frac12\}|\leq2^{p^*}m^{p^*}$.

Fix $x'\in G$ for which $g=g_{x'}$ has the properties above and $\ell(x')<\infty$. Since $g=1$ a.e. on $\{\tau:x'+\tau\xi\in K\}$, we have $\ell(x')\geq\ell_K(x')>0$, and, $g$ being continuous, $g(\tau_0)=1$ for some $\tau_0$. As $\{g\geq\frac12\}$ has length $\ell(x')$, the closed interval $[\tau_0,\tau_0+\ell(x')]$ contains a point $\tau_1$ with $g(\tau_1)\leq\frac12$. By H\"older's inequality,
\[
\tfrac12\leq g(\tau_0)-g(\tau_1)\leq\ell(x')^{1-\frac1p}\Big(\int_{\mathbb{R}}|g'|^p\,d\tau\Big)^{\frac1p},
\]
and therefore $\int_{\mathbb{R}}|g'|^p\,d\tau\geq2^{-p}\ell(x')^{1-p}$. Integrating over $G$ and applying Jensen's inequality to the convex function $t\mapsto t^{1-p}$, we obtain
\[
\|\nabla_\xi f\|_p^p\geq2^{-p}\int_G\ell^{1-p}\,dx'\geq2^{-p}|G|^p\Big(\int_G\ell\,dx'\Big)^{1-p}\geq2^{-p}\Big(\frac{|K|}{\operatorname{diam}K}\Big)^p\big(2^{p^*}m^{p^*}\big)^{1-p}.
\qedhere
\]
\end{proof}

\begin{proof}[Proof of Theorem \ref{thm:potential}]
Let $(f_j)\subset\mathcal{W}(K)$ be a minimizing sequence for $\operatorname{cap}^{\mathcal{A}}_p(K)$. Truncation does not increase the directional energies, so we may assume $0\leq f_j\leq1$, hence $f_j=1$ a.e. on $K$; and, for $j$ large, Proposition \ref{prop:energy}(iv) gives $\|f_j\|_{p^*}\leq m=S_{n,p}^{-1}(\operatorname{cap}^{\mathcal{A}}_p(K)+1)^{1/p}$. By Lemma \ref{lem:slicing}, $\|\nabla_\xi f_j\|_p\geq c_K^{1/p}$ for all $\xi$, that is, $\Pi^*_pf_j\subseteq RB^n$ with $R=c_K^{-1/p}$. On the other hand, since $\mathcal{E}_p(f_j)^p\leq\operatorname{cap}^{\mathcal{A}}_p(K)+1$ for $j$ large,
\[
|\Pi^*_pf_j|=\frac1n\Big(\frac{c_{n,p}}{\mathcal{E}_p(f_j)}\Big)^n\geq v_0=\frac{c_{n,p}^{\,n}}{n}\big(\operatorname{cap}^{\mathcal{A}}_p(K)+1\big)^{-n/p}>0.
\]
Hence, $\Pi^*_pf_j$ is an origin-symmetric convex body contained in $RB^n$ with volume at least $v_0$ and, therefore, it contains $\rho_0B^n$, where $\rho_0=v_0/(2\omega_{n-1}R^{n-1})$; indeed, if $r$ is the distance from the origin to its boundary, then, by symmetry, it lies in a slab of width $2r$, whose intersection with $RB^n$ has volume at most $2r\,\omega_{n-1}R^{n-1}$. As a consequence, 
\[
c_K^{1/p}\leq\|\nabla_\xi f_j\|_p\leq\rho_0^{-1}\qquad\text{for all }\xi\in S^{n-1}\text{ and all large }j,
\]
and, by \eqref{eq:average}, $(f_j)$ is bounded in $D^{1,p}(\mathbb{R}^n)$. A subsequence converges weakly to a function $u$, which belongs to $\mathcal{W}(K)$ because this class is weakly closed. For every $\xi$, $\|\nabla_\xi u\|_p\leq\liminf_j\|\nabla_\xi f_j\|_p$, that is, $\rho_{\Pi^*_pu}\geq\limsup_j\rho_{\Pi^*_pf_j}$ pointwise on $S^{n-1}$. These radial functions are bounded by $R$, so the reverse Fatou lemma gives $|\Pi^*_pu|\geq\limsup_j|\Pi^*_pf_j|$, that is, $\mathcal{E}_p(u)^p\leq\liminf_j\mathcal{E}_p(f_j)^p=\operatorname{cap}^{\mathcal{A}}_p(K)$. Thus $u$ is a minimizer.

Now let $u$ be any minimizer. Its truncation $v=\min\{\max\{u,0\},1\}$ belongs to $\mathcal{W}(K)$ and satisfies $\|\nabla_\xi v\|_p\leq\|\nabla_\xi u\|_p$ for every $\xi$. Since $\mathcal{E}_p(v)\geq\mathcal{E}_p(u)$ and both functions of $\xi$ are continuous, equality holds for every $\xi$, so that $\nabla u=0$ a.e. on $\{u<0\}\cup\{u>1\}$ and hence $u=v$. Thus $0\leq u\leq1$ and $\tilde u=1$ q.e. on $K$; in particular $\nabla u=0$ a.e. on $K$.

Let $H=H_u$. For every $w\in\mathcal{W}(K)$, Lemma \ref{lem:frozen} gives
\[
\int_{\mathbb{R}^n}H(\nabla w)^p\,dx\geq\mathcal{E}_p(w)^p\geq\operatorname{cap}^{\mathcal{A}}_p(K)=\mathcal{E}_p(u)^p=\int_{\mathbb{R}^n}H(\nabla u)^p\,dx,
\]
so $u$ minimizes the anisotropic energy of $H$ on $\mathcal{W}(K)$. Since $H$ is a strictly convex norm of class $C^1$ on $\mathbb{R}^n\setminus\{0\}$ (Lemma \ref{lem:admissible}) and $p>1$, the function $H^p$ is strictly convex, and $u$ is the only minimizer. The functional $w\mapsto\int H(\nabla w)^p\,dx$ is of class $C^1$ on $D^{1,p}(\mathbb{R}^n)$, with derivative at $u$ equal to $p\,\mu$, where
\[
\langle\mu,\varphi\rangle=\int_{\mathbb{R}^n}H(\nabla u)^{p-1}\langle\nabla H(\nabla u),\nabla\varphi\rangle\,dx,
\]
and $\mu=\Delta^{\mathcal{A}}_pu$ in the sense of distributions, by Lemma \ref{lem:frozen}. If $\varphi\in C^\infty_c(\mathbb{R}^n)$ and $\varphi\geq0$, then $u+t\varphi\in\mathcal{W}(K)$ for $t>0$, so $\langle\mu,\varphi\rangle\geq0$; thus $\mu$ is a nonnegative distribution, that is, a nonnegative Radon measure. If $\varphi\in C^\infty_c(\mathbb{R}^n\setminus K)$, then $u+t\varphi\in\mathcal{W}(K)$ for every $t\in\mathbb{R}$, so $\langle\mu,\varphi\rangle=0$; and if $\varphi\in C^\infty_c(\operatorname{int}K)$, then $\langle\mu,\varphi\rangle=0$ because $\nabla u=0$ a.e. on $K$. Hence $\Delta^{\mathcal{A}}_pu=0$ in $\mathbb{R}^n\setminus K$ and $\mu$ is supported on $\partial K$.

Finally, let $\psi\in C^\infty_c(\mathbb{R}^n)$ be equal to $1$ on a neighborhood of $K$, so that $\mu(\mathbb{R}^n)=\langle\mu,\psi\rangle$. The function $w=\psi-u$ satisfies $\tilde w=0$ q.e. on $K$, so it is a limit in $D^{1,p}(\mathbb{R}^n)$ of functions in $C^\infty_c(\mathbb{R}^n\setminus K)$ (Section \ref{sec:functions}), on which $\mu$ vanishes; as $\mu$ is continuous on $D^{1,p}(\mathbb{R}^n)$, $\langle\mu,w\rangle=0$. Therefore, by Euler's identity $\langle\nabla H(\zeta),\zeta\rangle=H(\zeta)$,
\[
\mu(\mathbb{R}^n)=\langle\mu,u\rangle=\int_{\mathbb{R}^n}H(\nabla u)^p\,dx=\operatorname{cap}^{\mathcal{A}}_p(K).
\qedhere
\]
\end{proof}

\begin{remark}\label{rem:filling}
Let $\hat K$ be the union of $K$ with the bounded components of $\mathbb{R}^n\setminus K$. Then $\operatorname{cap}^{\mathcal{A}}_p(K)=\operatorname{cap}^{\mathcal{A}}_p(\hat K)$. Indeed, if $\varphi\in C^\infty_c(\mathbb{R}^n)$ and $\varphi\geq1$ on $K$, the function equal to $\min\{\max\{\varphi,0\},1\}$ on the unbounded component of $\mathbb{R}^n\setminus K$ and to $1$ on $\hat K$ is Lipschitz, has compact support and directional energies not larger than those of $\varphi$, and a mollification of a slightly larger multiple of it is admissible for $\hat K$. Applied to a sequence approximating a minimizer for $K$, the same construction shows that every such minimizer equals $1$ a.e. on $\hat K$. Consequently, Theorem \ref{thm:potential} holds whenever $|\hat K|>0$, for instance when $K$ is the boundary of a bounded open set. On the other hand, the affine capacity of a compact set with $|\hat K|=0$ may vanish, as for subsets of hyperplanes \cite[Theorem 2.3(i)]{XiaoZhang2016}, and we do not know whether it is attained when it is positive.
\end{remark}

\section{Isoperimetric inequalities}\label{sec:iso}

Equality in the P\'olya--Szeg\H{o} inequality $\|\nabla f^\star\|_p\leq\|\nabla f\|_p$, for nonnegative $f\in W^{1,p}(\mathbb{R}^n)$, forces $f$ to be a translate of $f^\star$ provided that $\big|\{x:\nabla f^\star(x)=0,\ 0<f^\star(x)<\operatorname{ess\,sup}f\}\big|=0$, by a theorem of Brothers and Ziemer \cite{BrothersZiemer1988}.

The following lemma provides this non-degeneracy condition for extremals on a ball. For $q=p$ it is classical \cite{FMP2009}, and for $1<q<p^*$ it is observed in \cite[Section 3]{FranzinaLamberti2010}; we include the proof, which also covers $q=1$.

\begin{lemma}\label{lem:radial}
Let $p>1$, $1\leq q<p^*$ and $R>0$, and let $w\in W^{1,p}_0(B_R)$ be a nonnegative, radially symmetric and radially nonincreasing function with $\|w\|_q=1$ and $\|\nabla w\|_p^p=\Lambda_{p,q}(B_R)$. Write $w(x)=\varphi(|x|)$, where $\varphi$ is nonincreasing and locally absolutely continuous on $(0,R)$. Then $\varphi>0$ on $(0,R)$ and $\varphi'<0$ a.e. in $(0,R)$. In particular, $\{w>0\}=B_R$ up to a null set, and
\[
\big|\{x\in B_R:\ \nabla w(x)=0,\ 0<w(x)<\operatorname{ess\,sup}w\}\big|=0.
\]
\end{lemma}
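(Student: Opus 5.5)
The plan is to derive the Euler--Lagrange equation satisfied by $w$ in radial form and then integrate it once. Since $w$ minimizes the Rayleigh-type quotient on $B_R$, it satisfies, for some Lagrange multiplier $\lambda=\Lambda_{p,q}(B_R)>0$,
\[
-\Delta_pw=\lambda\,w^{q-1}\quad\text{in }B_R,
\]
in the weak sense, with $w^{q-1}\equiv1$ on $\{w>0\}$ and interpreted as $\chi_{\{w>0\}}$-type when $q=1$. The case $q=1$ needs care, since $t\mapsto|t|$ is not differentiable at $0$. I would handle it by testing with nonnegative variations $w+t\phi$ with $\phi\ge0$, and with variations supported where $w>0$; this yields $-\Delta_pw\le\lambda$ and $-\Delta_pw=\lambda$ on $\{w>0\}$. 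Actually, a cleaner route for every $q$ avoids the PDE on the whole ball: test the variational inequality with radial functions $\phi(|x|)$, $\phi\in C^\infty_c(0,R)$, supported in the open interval $(0,r_0)$ where $r_0=\sup\{r:\varphi(r)>0\}$. There $\varphi>0$ by monotonicity, so the $L^q$ constraint is differentiable. In polar coordinates this gives the ODE in the weak sense on $(0,r_0)$:
\[
-\big(r^{n-1}|\varphi'|^{p-2}\varphi'\big)'=\lambda\,r^{n-1}\varphi^{q-1}.
\]

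Integrating this from $\epsilon$ to $r$ and letting $\epsilon\to0$ is the next step. The boundary term $\epsilon^{n-1}|\varphi'(\epsilon)|^{p-1}\to0$ along a subsequence because $\int_0^R|\varphi'|^pr^{n-1}dr<\infty$, and the flux function is monotone, so the limit exists. This yields
\[
r^{n-1}|\varphi'(r)|^{p-2}(-\varphi'(r))=\lambda\int_0^r s^{n-1}\varphi(s)^{q-1}ds>0
\]
for a.e. $r\in(0,r_0)$, since $\varphi>0$ near $0$ (because $\|w\|_q=1$). Hence $\varphi'<0$ a.e. on $(0,r_0)$.

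It remains to show $r_0=R$. Suppose $r_0<R$. Then $w\in W^{1,p}_0(B_{r_0})$, so $\Lambda_{p,q}(B_R)\ge\Lambda_{p,q}(B_{r_0})$. By the strict scaling $\Lambda_{p,q}(tB)=t^{n-p-np/q}\Lambda_{p,q}(B)$, the exponent $n-p-np/q$ is negative because $q<p^*$, so $\Lambda_{p,q}(B_{r_0})>\Lambda_{p,q}(B_R)$, a contradiction. Thus $\varphi>0$ on $(0,R)$ and $\varphi'<0$ a.e. there.

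The final claims follow directly. $\{w>0\}=B_R$ up to the origin. Moreover, $\nabla w(x)=\varphi'(|x|)x/|x|$ vanishes only on a set of radii of measure zero, hence on a null set of $B_R$ by polar coordinates. The main obstacle I expect is justifying the weak radial ODE and the vanishing boundary flux at $r=0$ with only $W^{1,p}$ regularity. I would first argue via radial test functions and the monotonicity of the flux, which is enough to pass to the limit.
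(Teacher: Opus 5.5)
Your proof works and reaches the conclusion by a different route from the paper, but one justification is false as stated. You claim that $\epsilon^{n-1}|\varphi'(\epsilon)|^{p-1}\to0$ along a subsequence because $\int_0^R|\varphi'|^pr^{n-1}\,dr<\infty$. This holds only for $p\le n$. For $p>n$, the function $\varphi(r)=1-r^{(p-n)/(p-1)}$ has finite energy near $0$ but constant flux $r^{n-1}|\varphi'(r)|^{p-1}=\big(\tfrac{p-n}{p-1}\big)^{p-1}$. So finite energy cannot exclude a point mass at the origin, and test functions in $C^\infty_c(0,r_0)$ cannot detect one. The lemma is needed for every $p>1$, so this matters.

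The error is harmless, because you only use the sign of the flux. Since $\varphi$ is nonincreasing, $G(r)=r^{n-1}|\varphi'|^{p-2}\varphi'\le0$. For $0<\epsilon<r<r_0$ this gives $G(r)=G(\epsilon)-\lambda\int_\epsilon^r s^{n-1}\varphi^{q-1}\,ds\le-\lambda\int_\epsilon^r s^{n-1}\varphi^{q-1}\,ds$. Letting $\epsilon\to0$ yields $G<0$ on $(0,r_0)$, that is, $\varphi'<0$ a.e. there. If you want the exact identity, test instead with $\eta(|x|)$, where $\eta$ is smooth, constant near $0$ and supported in $[0,b]$ with $b<r_0$. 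This is admissible also for $q=1$, since $w\ge\varphi(b)>0$ on $B_b$, and integrating by parts gives $G(0^+)=0$.

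The paper derives the Euler--Lagrange equation against every $\psi\in W^{1,p}_0(B_R)$. For $q=1$ it observes that $g(t)=\|\nabla(w+t\psi)\|_p^p-\Lambda\big(\int(w+t\psi)\,dx\big)^p$ dominates $\|\nabla(w+t\psi)\|_p^p-\Lambda\|w+t\psi\|_1^p\ge0=g(0)$. Hence $-\Delta_pw=\Lambda w^{q-1}$ holds on the whole ball, with the convention $w^0=1$. Testing with radial functions constant near $0$ then gives $G(0)=0$ and $G(r)=-\Lambda\int_0^r s^{n-1}\varphi^{q-1}\,ds<0$ on all of $(0,R)$. Positivity of $\varphi$ then comes from the ODE itself: a zero at $r_1<R$ would force $\varphi'=0$ on $[r_1,R)$.

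You instead work only on the positivity interval $(0,r_0)$. This sidesteps the non-differentiability of the $L^1$ norm without any trick. You then recover $r_0=R$ from the strict decrease of $r\mapsto\Lambda_{p,q}(B_r)$, whose scaling exponent $n-p-np/q$ is negative precisely because $q<p^*$.

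Both routes are sound. The paper extracts everything from a single global Euler--Lagrange equation and obtains the exact flux formula on $(0,R)$. Yours replaces that global equation by a comparison of the constants of nested balls. There you should add a word on why $w\in W^{1,p}_0(B_{r_0})$, for instance because $B_{r_0}$ is a Lipschitz domain and $w$ vanishes a.e. outside it.
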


\begin{proof}
Let $\Lambda=\Lambda_{p,q}(B_R)$. We first show that, with the convention $w^0=1$,
\begin{equation}\label{eq:EL}
\int_{B_R}|\nabla w|^{p-2}\langle\nabla w,\nabla\psi\rangle\,dx=\Lambda\int_{B_R}w^{q-1}\psi\,dx\qquad\text{for every }\psi\in W^{1,p}_0(B_R).
\end{equation}
If $q>1$, the function $t\mapsto\|\nabla(w+t\psi)\|_p^p-\Lambda\|w+t\psi\|_q^p$ is nonnegative, vanishes at $t=0$ and is differentiable there, so its derivative at $t=0$ vanishes, which is \eqref{eq:EL}. If $q=1$, then $\int w=1$, and for $|t|$ small the function $g(t)=\|\nabla(w+t\psi)\|_p^p-\Lambda\big(\int(w+t\psi)\,dx\big)^p$ satisfies $g(t)\geq\|\nabla(w+t\psi)\|_p^p-\Lambda\|w+t\psi\|_1^p\geq0=g(0)$; hence $g'(0)=0$, which is again \eqref{eq:EL}.

Now take $\psi(x)=\eta(|x|)$ with $\eta\in C^\infty([0,R])$ constant near $0$ and vanishing near $R$. In polar coordinates, \eqref{eq:EL} becomes
\begin{equation}\label{eq:radialEL}
\int_0^RG\,\eta'\,dr=\int_0^Rh\,\eta\,dr,\qquad G(r)=r^{n-1}|\varphi'(r)|^{p-2}\varphi'(r),\quad h(r)=\Lambda\,r^{n-1}\varphi(r)^{q-1}.
\end{equation}
By H\"older's inequality, $G$ and $h$ belong to $L^1(0,R)$. Choosing $\eta\in C^\infty_c(0,R)$ in \eqref{eq:radialEL} shows that $G'=-h$ in the sense of distributions on $(0,R)$; hence $G$ coincides a.e. with an absolutely continuous function, still denoted by $G$, which extends continuously to $[0,R]$. Integrating by parts in \eqref{eq:radialEL} with $\eta(0)=1$ yields $G(0)=0$, and therefore
\[
G(r)=-\Lambda\int_0^rs^{n-1}\varphi(s)^{q-1}\,ds\qquad\text{for every }r\in(0,R).
\]
Since $w\neq0$ and $\varphi$ is nonincreasing, $\varphi>0$ near $0$, so the integral is positive for every $r\in(0,R)$; thus $G<0$ on $(0,R)$, that is, $\varphi'<0$ a.e. in $(0,R)$. If $\varphi(r_1)=0$ for some $r_1<R$, then $\varphi$ vanishes on $[r_1,R)$ and $\varphi'=0$ there, a contradiction. Hence $\varphi>0$ on $(0,R)$. The last assertion follows because $|\nabla w(x)|=|\varphi'(|x|)|$ for a.e. $x$.
\end{proof}

\begin{proof}[Proof of Theorem \ref{thm:iso}]
Let $v\in W^{1,p}_0(\Omega)\setminus\{0\}$. By Lemma \ref{lem:frozen}, Lemma \ref{lem:Wulff}(iv) with $Q=\Gamma_pL_v$, and the $L_p$ Busemann--Petty centroid inequality \eqref{eq:BP}, since $|L_v|=\omega_n$,
\begin{equation}\label{eq:isochain}
\mathcal{E}_p(v)^p=\int_\Omega H_v(\nabla v)^p\,dx\geq\Lambda^{\Gamma_pL_v}_{p,q}(\Omega)\,\|v\|_q^p\geq\Big(\frac{|\Gamma_pL_v|}{|L_v|}\Big)^{\frac pn}\Lambda_{p,q}(\Omega^\star)\,\|v\|_q^p\geq\Lambda_{p,q}(\Omega^\star)\,\|v\|_q^p.
\end{equation}
Taking the infimum over $v$ proves \eqref{eq:iso}.

Suppose that $\Omega\subseteq E$ and $C_p(E\setminus\Omega)=0$ for an open ellipsoid $E$. By Section \ref{sec:functions}, $\Lambda^{\mathcal{A}}_{p,q}(\Omega)=\Lambda^{\mathcal{A}}_{p,q}(E)$, and $|E|=|\Omega|$ because sets of zero capacity are null. Hence $E=A\,\Omega^\star+x_0$ for some $A\in\mathrm{SL}(n)$ and $x_0\in\mathbb{R}^n$, and \eqref{eq:invariance} gives $\Lambda^{\mathcal{A}}_{p,q}(\Omega)=\Lambda^{\mathcal{A}}_{p,q}(\Omega^\star)\leq\Lambda_{p,q}(\Omega^\star)$. By \eqref{eq:iso}, equality holds.

Conversely, suppose that equality holds in \eqref{eq:iso}, and let $u\geq0$ be a minimizer for $\Lambda^{\mathcal{A}}_{p,q}(\Omega)$ with $\|u\|_q=1$, given by Proposition \ref{prop:attain}. Then \eqref{eq:isochain} with $v=u$ is a chain of equalities, so that $|\Gamma_pL_u|=|L_u|$. By the equality case of \eqref{eq:BP}, $L_u$ is an origin-symmetric ellipsoid, and since $\Gamma_p$ commutes with linear maps and $\Gamma_pB^n=B^n$, we have $\Gamma_pL_u=L_u=A_0B^n$ for some $A_0\in\mathrm{SL}(n)$. Let $D=A_0^{-1}\Omega$ and $w=u\circ A_0\in W^{1,p}_0(D)$. By Lemma \ref{lem:Wulff}(iii), $\|w\|_q=1$ and $\|\nabla w\|_p^p=\int_\Omega h(\Gamma_pL_u,\nabla u)^p\,dx=\Lambda_{p,q}(\Omega^\star)$, and $D^\star=\Omega^\star$. By the P\'olya--Szeg\H{o} principle, $w^\star\in W^{1,p}_0(\Omega^\star)$, $\|w^\star\|_q=1$ and $\|\nabla w^\star\|_p\leq\|\nabla w\|_p$; hence $w^\star$ is a nonnegative, radially nonincreasing minimizer for $\Lambda_{p,q}(\Omega^\star)$, and $\|\nabla w^\star\|_p=\|\nabla w\|_p$. Let $R$ be the radius of $\Omega^\star$ and write $w^\star(x)=\varphi(|x|)$. By Lemma \ref{lem:radial}, $w^\star$ satisfies the non-degeneracy condition of the theorem of Brothers and Ziemer, since $\operatorname{ess\,sup}w=\operatorname{ess\,sup}w^\star$, and that theorem gives $w=w^\star(\cdot+y_0)$ a.e. for some $y_0\in\mathbb{R}^n$. With $A=A_0$ and $x_0=A_0y_0$, this reads
\[
u(x)=\varphi\big(|A^{-1}(x+x_0)|\big)\qquad\text{for a.e. }x\in\mathbb{R}^n.
\]
Let $E'=A\,\Omega^\star-x_0$, an open ellipsoid with $|E'|=|\Omega|$ and center $z=-x_0$. By Lemma \ref{lem:radial}, the right-hand side is positive and continuous in $E'\setminus\{z\}$. Being continuous, it is quasicontinuous there, so it coincides q.e. in $E'\setminus\{z\}$ with the quasicontinuous representative $\tilde u$ of $u$. On the other hand, $\tilde u=0$ q.e. in $\mathbb{R}^n\setminus\Omega$ by \eqref{eq:W0cap}. Therefore $C_p\big((E'\setminus\Omega)\setminus\{z\}\big)=0$. If $p\leq n$, points have zero Sobolev $p$-capacity. If $p>n$, $\tilde u$ is continuous and vanishes at every point of $\mathbb{R}^n\setminus\Omega$, while $\tilde u(z)=\lim_{r\to0^+}\varphi(r)>0$, so $z\in\Omega$. In both cases $C_p(E'\setminus\Omega)=0$; in particular $|E'\setminus\Omega|=0$, and since $|\Omega|=|E'|$, also $|\Omega\setminus E'|=0$. The open set $\Omega\setminus\overline{E'}$ is then null, hence empty, so $\Omega\subseteq\overline{E'}$ and, $\Omega$ being open, $\Omega\subseteq E'$. Thus $\Omega$ is an ellipsoid up to a set of zero Sobolev $p$-capacity.
\end{proof}

\begin{proof}[Proof of Corollary \ref{cor:SV}]
The first inequality is \eqref{eq:invariance}. Since $T^{\mathcal{A}}_p(\Omega)=\Lambda^{\mathcal{A}}_{p,1}(\Omega)^{-1/(p-1)}$ and $T_p(\Omega^\star)=\Lambda_{p,1}(\Omega^\star)^{-1/(p-1)}$, the second inequality and its equality cases are the case $q=1$ of Theorem \ref{thm:iso}. The value of $T_p(\Omega^\star)$ follows from \eqref{eq:TBall} with $R=(|\Omega|/\omega_n)^{1/n}$.
\end{proof}

\begin{proof}[Proof of Theorem \ref{thm:isocap}]
The first inequality follows from $\mathcal{E}_p\leq\|\nabla\cdot\|_p$. For the second we may assume $|K|>0$, since otherwise $\operatorname{cap}_p(K^\star)=0$; then every $u\in\mathcal{W}(K)$ is nonzero. By Lemma \ref{lem:frozen}, Lemma \ref{lem:Wulff}(ii) with $Q=\Gamma_pL_u$, and the $L_p$ Busemann--Petty centroid inequality \eqref{eq:BP}, since $|L_u|=\omega_n$,
\begin{equation}\label{eq:capchain}
\mathcal{E}_p(u)^p=\int_{\mathbb{R}^n}H_u(\nabla u)^p\,dx\geq\operatorname{cap}^{\Gamma_pL_u}_p(K)\geq\Big(\frac{|\Gamma_pL_u|}{|L_u|}\Big)^{\frac pn}\operatorname{cap}_p(K^\star)\geq\operatorname{cap}_p(K^\star),
\end{equation}
and the second inequality follows by taking the infimum over $u$.

If $E\subseteq K$ is a closed ellipsoid and $C_p(K\setminus E)=0$, then $\mathcal{W}(K)=\mathcal{W}(E)$ and $|K|=|E|$. Hence $\operatorname{cap}^{\mathcal{A}}_p(K)=\operatorname{cap}^{\mathcal{A}}_p(E)=\operatorname{cap}^{\mathcal{A}}_p(K^\star)$ by the affine invariance, and $\operatorname{cap}^{\mathcal{A}}_p(K^\star)=\operatorname{cap}_p(K^\star)$ by the two inequalities applied to $K^\star$.

Conversely, suppose that $\operatorname{cap}^{\mathcal{A}}_p(K)=\operatorname{cap}_p(K^\star)$ and let $u$ be a minimizer given by Theorem \ref{thm:potential}, so that $0\leq u\leq1$. Since $\mathcal{E}_p(u)^p=\operatorname{cap}_p(K^\star)$, \eqref{eq:capchain} is a chain of equalities for this $u$. Hence $|\Gamma_pL_u|=|L_u|$ and, as in the proof of Theorem \ref{thm:iso}, $\Gamma_pL_u=A_0B^n$ for some $A_0\in\mathrm{SL}(n)$. By Lemma \ref{lem:Wulff}(iii), $w=u\circ A_0$ belongs to $\mathcal{W}(A_0^{-1}K)$ and $\|\nabla w\|_p^p=\operatorname{cap}_p(K^\star)$. Since $0\leq w\leq1$ and $w=1$ a.e. on $A_0^{-1}K$, the rearrangement $w^\star$ belongs to $D^{1,p}(\mathbb{R}^n)$ and satisfies $w^\star\geq1$ on $K^\star$, because $\{w^\star\geq1\}$ is a closed ball of volume $|\{w\geq1\}|\geq|A_0^{-1}K|=|K|$; being radially nonincreasing and continuous away from the origin, which has zero Sobolev $p$-capacity, $w^\star$ belongs to $\mathcal{W}(K^\star)$. Moreover $\|\nabla w^\star\|_p\leq\|\nabla w\|_p$ by the P\'olya--Szeg\H{o} principle; hence $w^\star$ minimizes $\|\nabla\cdot\|_p^p$ on $\mathcal{W}(K^\star)$, and by the uniqueness of the capacitary potential, $w^\star=v_R$, where $K^\star=\overline{B}_R$ and $v_R$ is given in Section \ref{sec:functions}. For $0<t<1$, the function $(w-t)_+$ belongs to $W^{1,p}(\mathbb{R}^n)$, because $|\{w>t\}|<\infty$, and its symmetric decreasing rearrangement is $(v_R-t)_+$. Since $\|\nabla w\|_p^p=\|\nabla(w-t)_+\|_p^p+\|\nabla\min\{w,t\}\|_p^p$, and likewise for $v_R$, the P\'olya--Szeg\H{o} inequality for both terms and $\|\nabla w\|_p=\|\nabla v_R\|_p$ give $\|\nabla(w-t)_+\|_p=\|\nabla(v_R-t)_+\|_p$. As $v_R$ is strictly decreasing in $|x|$ where $0<v_R<1$, the theorem of Brothers and Ziemer gives $(w-t)_+=(v_R-t)_+(\cdot+y_t)$ a.e., and $y_t$ does not depend on $t$, because for $s>t$ the superlevel set $\{w>s\}$ is a ball centered at $-y_t$. Letting $t\to0$, we get $w=v_R(\cdot+y_0)$ a.e., that is, $u(x)=v_R\big(A^{-1}(x+x_0)\big)$ for a.e. $x$, with $A=A_0$ and $x_0=A_0y_0$. The right-hand side is continuous, equal to $1$ on the closed ellipsoid $E=A\overline{B}_R-x_0$ and smaller than $1$ outside $E$; being continuous, it is the quasicontinuous representative of $u$. Since $\tilde u=1$ q.e. on $K$, we obtain $C_p(K\setminus E)=0$, hence $|K\setminus E|=0$ and, as $|E|=|K|$, also $|E\setminus K|=0$. Thus $K\cap E$ is a closed subset of $E$ of full measure, which forces $E\subseteq K$.
\end{proof}

Huang and Li \cite[Theorems 1.1 and 1.2]{HuangLi2016} proved that, for $f\in W^{1,p}(\mathbb{R}^n)$, the minimum of $\|\nabla(f\circ A)\|_p$ over $A\in\mathrm{SL}(n)$ is attained and
\begin{equation}\label{eq:HuangLi}
m_{n,p}\min_{A\in\mathrm{SL}(n)}\|\nabla(f\circ A)\|_p\leq\mathcal{E}_p(f)\leq\min_{A\in\mathrm{SL}(n)}\|\nabla(f\circ A)\|_p,
\end{equation}
where, in the normalization \eqref{eq:Ep},
\[
m_{n,p}=\frac{\pi^{\frac1{2p}+\frac12}\,\Gamma(\frac{n+p}2)^{\frac1p}\,\Gamma(1+\frac np)^{\frac1n}}{2^{\frac1p+1}\,\Gamma(1+\frac n2)^{\frac1n+\frac1p}\,\Gamma(\frac{p+1}2)^{\frac1p}\,\Gamma(1+\frac1p)}.
\]
The constant is sharp \cite[Theorem 4.4]{HuangLi2016}; $m_{n,2}=1$ and $m_{n,p}<1$ for $p\neq2$. Taking infima over admissible functions gives the following comparison. Its last assertion is due to Xiao and Zhang \cite[Theorem 2.3(i)]{XiaoZhang2016}.

\begin{proposition}\label{prop:compare}
Let $1<p<n$ and let $K\subset\mathbb{R}^n$ be compact. Then
\[
m_{n,p}^p\inf_{A\in\mathrm{SL}(n)}\operatorname{cap}_p(AK)\leq\operatorname{cap}^{\mathcal{A}}_p(K)\leq\inf_{A\in\mathrm{SL}(n)}\operatorname{cap}_p(AK).
\]
If $p=2$, then $\operatorname{cap}^{\mathcal{A}}_2(K)=\inf_{A\in\mathrm{SL}(n)}\operatorname{cap}_2(AK)$, and the infimum is attained when $|K|>0$. If $K$ is contained in a hyperplane, then $\operatorname{cap}^{\mathcal{A}}_p(K)=0$.
\end{proposition}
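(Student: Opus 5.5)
The lower bound will follow from \eqref{eq:HuangLi}, the upper bound from the inequality $\mathcal{E}_p(f)\leq\|\nabla(f\circ A)\|_p$ for every $A\in\mathrm{SL}(n)$, and the $p=2$ and hyperplane assertions from Proposition \ref{prop:p2} and a direct construction. First I record the change of variables behind both bounds. For $A\in\mathrm{SL}(n)$, $f\mapsto f\circ A^{-1}$ is a bijection between admissible functions for $K$ and for $AK$, and Proposition \ref{prop:energy}(i) gives $\mathcal{E}_p(f\circ A^{-1})=\mathcal{E}_p(f)$. This holds whether we use $C^\infty_c$ functions with $\varphi\geq1$ on $K$ or the class $\mathcal{W}(K)$.

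For the upper bound, fix $A\in\mathrm{SL}(n)$ and an admissible $\varphi$ for $AK$. Then $\varphi\circ A$ is admissible for $K$, and \eqref{eq:domination} together with Proposition \ref{prop:energy}(i) gives
\[
\operatorname{cap}^{\mathcal{A}}_p(K)\leq\mathcal{E}_p(\varphi\circ A)^p=\mathcal{E}_p(\varphi)^p\leq\|\nabla\varphi\|_p^p.
\]
Taking the infimum over $\varphi$ and then over $A$ proves it.

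For the lower bound, take $\varphi\in C^\infty_c(\mathbb{R}^n)$ with $\varphi\geq1$ on $K$. Since $\varphi\in W^{1,p}(\mathbb{R}^n)$, \eqref{eq:HuangLi} gives some $A\in\mathrm{SL}(n)$ with $m_{n,p}\|\nabla(\varphi\circ A)\|_p\leq\mathcal{E}_p(\varphi)$. The function $\varphi\circ A$ is smooth, compactly supported and $\geq1$ on $A^{-1}K$, so
\[
\mathcal{E}_p(\varphi)^p\geq m_{n,p}^p\operatorname{cap}_p(A^{-1}K)\geq m_{n,p}^p\inf_{B\in\mathrm{SL}(n)}\operatorname{cap}_p(BK).
\]
Taking the infimum over $\varphi$ finishes this step. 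No compactness in $A$ is needed, because the infimum over $\mathrm{SL}(n)$ already sits on the right-hand side.

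For $p=2$ we have $m_{n,2}=1$, so the two bounds coincide. Attainment when $|K|>0$ is exactly the last statement of Proposition \ref{prop:p2}. To check it directly: take the minimizer $u$ from Theorem \ref{thm:potential} and, by Proposition \ref{prop:p2}, a matrix $A$ with $\mathcal{E}_2(u)^2=\|\nabla(u\circ A)\|_2^2$. Then $u\circ A\in\mathcal{W}(A^{-1}K)$, so $\operatorname{cap}_2(A^{-1}K)\leq\operatorname{cap}^{\mathcal{A}}_2(K)$, and the upper bound forces equality at $A^{-1}\in\mathrm{SL}(n)$.

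For the hyperplane statement, rotate so that $K\subset\{x_n=0\}$, which does not change $\operatorname{cap}^{\mathcal{A}}_p$. Apply the upper bound with $A_t=\operatorname{diag}(t,\dots,t,t^{1-n})$. As $t\to\infty$, the set $A_tK$ is stretched in the first $n-1$ coordinates and lies in $\{x_n=0\}$, so this direct choice is useless. Instead take $t\to0$: then $A_tK\subset tB'\times\{0\}$, where $B'$ is an $(n-1)$-ball containing the projection of $K$. Since $p<n$, a closed $(n-1)$-disc of radius $t$ lies in a ball of radius $t$, and by \eqref{eq:capball} $\operatorname{cap}_p(A_tK)\lesssim t^{n-p}\to0$. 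Hence the infimum is $0$. This step is the one most at risk: the first guess ($t\to\infty$) points the wrong way, and it only works because the $x_n$-direction is collapsed onto $K$'s hyperplane, so no blow-up occurs. If this failed, I would cite \cite[Theorem 2.3(i)]{XiaoZhang2016}, as the statement indicates.
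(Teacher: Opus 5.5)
Your proposal is correct and follows the paper's proof step by step. The upper bound comes from transporting admissible functions under $\mathrm{SL}(n)$, and the lower bound from applying \eqref{eq:HuangLi} to each admissible $\varphi$. The case $p=2$ is read off Proposition \ref{prop:p2}. The hyperplane case uses the same diagonal maps: your $A_t$ with $t\to0$ is the paper's $A_\varepsilon$ with $t=\varepsilon^{1/(n-1)}$, giving $A_tK=tK$ and $\operatorname{cap}_p(A_tK)=t^{n-p}\operatorname{cap}_p(K)\to0$.

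One cosmetic point: the hyperplane need not pass through the origin, so you should normalize by a rigid motion (rotation plus translation, both allowed by Proposition \ref{prop:energy}(i)), not by a rotation alone.
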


\begin{proof}
For $A\in\mathrm{SL}(n)$, a function $u$ belongs to $\mathcal{W}(K)$ if and only if $u\circ A^{-1}\in\mathcal{W}(AK)$, and $\mathcal{E}_p(u\circ A^{-1})=\mathcal{E}_p(u)$; hence $\operatorname{cap}^{\mathcal{A}}_p(K)=\operatorname{cap}^{\mathcal{A}}_p(AK)\leq\operatorname{cap}_p(AK)$, which is the upper bound.

For the lower bound, let $\varphi\in C^\infty_c(\mathbb{R}^n)$ with $\varphi\geq1$ on $K$, and let $A\in\mathrm{SL}(n)$ attain the minimum in \eqref{eq:HuangLi} for $f=\varphi$. Then $\varphi\circ A\geq1$ on $A^{-1}K$, so that
\[
\mathcal{E}_p(\varphi)^p\geq m_{n,p}^p\|\nabla(\varphi\circ A)\|_p^p\geq m_{n,p}^p\operatorname{cap}_p(A^{-1}K)\geq m_{n,p}^p\inf_{A'\in\mathrm{SL}(n)}\operatorname{cap}_p(A'K),
\]
and the lower bound follows by taking the infimum over $\varphi$.

For $p=2$, the remaining assertions are contained in Proposition \ref{prop:p2}.

Finally, if $K$ is contained in a hyperplane, which we may assume to be $e_n^\perp$, let $A_\varepsilon$ be the diagonal matrix with entries $\varepsilon^{1/(n-1)}$ in the first $n-1$ places and $\varepsilon^{-1}$ in the last one. Then $A_\varepsilon\in\mathrm{SL}(n)$, $A_\varepsilon K=\varepsilon^{1/(n-1)}K$, and $\operatorname{cap}_p(A_\varepsilon K)=\varepsilon^{(n-p)/(n-1)}\operatorname{cap}_p(K)\to0$ as $\varepsilon\to0$.
\end{proof}

\section{The affine Kohler-Jobin inequality}\label{sec:KJ}

We begin by recalling the anisotropic extension of \eqref{eq:KJEuclid} proved by Brasco.

\begin{theorem}[{\cite[Theorem 6.1]{Brasco2014}}]\label{thm:Brasco}
Let $p>1$, $1<q<p^*$, and let $Q$ be an origin-symmetric convex body such that $Q^\circ$ is strictly convex and of class $C^1$. Then, for every open set $\Omega\subset\mathbb{R}^n$ of finite measure,
\[
T^{Q}_p(\Omega)^{(p-1)\sigma}\,\Lambda^{Q}_{p,q}(\Omega)\geq T^{Q}_p(Q)^{(p-1)\sigma}\,\Lambda^{Q}_{p,q}(Q),
\]
and equality holds if and only if $\Omega$ is a translate of a dilate of $Q$.
\end{theorem}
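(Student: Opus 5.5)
This is Brasco's anisotropic extension of Kohler-Jobin's theorem \cite[Theorem 6.1]{Brasco2014}, so within the paper it is simply quoted. If I had to prove it, I would adapt Kohler-Jobin's rearrangement to the norm $h_Q$, with Wulff shapes (dilates of $Q$) in place of balls. I have not checked the precise monotonicity properties of this rearrangement in the anisotropic setting; the plan below records them as what I expect, to be confirmed.

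\emph{Reduction.} The left-hand side is invariant under dilations, so it suffices to prove $\Lambda^{Q}_{p,q}(\Omega)\geq\Lambda^{Q}_{p,q}(W)$, where $W$ is the dilate of $Q$ with $T^{Q}_p(W)=T^{Q}_p(\Omega)$. The equality case of the theorem would then follow from the equality case of this comparison. By Lemma \ref{lem:admissible}-type regularity and the Rellich--Kondrachov theorem, $\Lambda^{Q}_{p,q}(\Omega)$ is attained by a nonnegative $v$. The torsion function $u_\Omega$, which solves the anisotropic equation $-\operatorname{div}(h_Q(\nabla u)^{p-1}\nabla h_Q(\nabla u))=1$, exists and has $\int u_\Omega=T^{Q}_p(\Omega)$.

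\emph{Rearrangement.} I would work with the superlevel sets $\Omega_t=\{v>t\}$ and the torsional profile $\tau(t)=T^{Q}_p(\Omega_t)$, which is nonincreasing in $t$. Following Kohler-Jobin, define $v^\#$ on $W$ so that each superlevel set $\{v^\#>t\}$ is the dilate of $Q$ with the same torsional rigidity $\tau(t)$. Since $\tau(0)=T^{Q}_p(\Omega)$, the support of $v^\#$ is exactly $W$.

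Using the coarea formula with the anisotropic perimeter $\int_{\{v=t\}}h_Q(\nu)\,d\mathcal{H}^{n-1}$, I would aim for two facts.
\begin{enumerate}[label=\textup{(\alph*)}]
\item $\int h_Q(\nabla v^\#)^p\leq\int h_Q(\nabla v)^p$. This should follow by expressing the energy through $\tau'(t)$, via a Hadamard-type formula for the derivative of the torsion along level sets, together with Jensen's inequality.
\item $\|v^\#\|_q\geq\|v\|_q$. This should follow from a comparison between $\tau$ and the volume $|\Omega_t|$, in which the anisotropic Saint-Venant inequality (Lemma \ref{lem:Wulff}(iv) with $q=1$) and the Wulff isoperimetric inequality enter.
\end{enumerate}
Together (a) and (b) give $\Lambda^{Q}_{p,q}(\Omega)\geq\Lambda^{Q}_{p,q}(W)$, and hence the inequality in the theorem.

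\emph{Main obstacle.} The hardest step is establishing (a) and (b). The key identity in Kohler-Jobin's argument compares $-\tau'(t)$ with a weighted perimeter of $\{v=t\}$. This must be redone for the anisotropic $p$-torsion, which needs the regularity of $u_{\Omega_t}$ and the $C^1$ and strict convexity assumptions on $Q^\circ$ to differentiate $h_Q$. I would first try to prove it for smooth $v$ with nondegenerate level sets and then pass to the limit by approximation.

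\emph{Equality case.} If equality holds, every inequality above is an equality. In particular the Wulff isoperimetric and Saint-Venant inequalities are equalities on almost every level set, which forces each $\Omega_t$ to be a translate of a dilate of $Q$; this uses strict convexity for uniqueness of the Wulff shape. Letting $t\to0$ should then show that $\Omega$ itself is such a translate, up to the usual null-set issues.
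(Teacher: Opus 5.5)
The paper does not prove this statement; it only quotes \cite[Theorem 6.1]{Brasco2014}. Your sketch therefore has to stand on its own, and it does not: step (b) is false for the rearrangement you define. You keep the heights, so $W_t:=\{v^\#>t\}$ is the dilate of $Q$ with $T^{Q}_p(W_t)=T^{Q}_p(\Omega_t)$. The anisotropic Saint-Venant inequality (Lemma \ref{lem:Wulff}(i) and (iv) with $q=1$) says that $T^{Q}_p(\Omega_t)$ is at most the torsional rigidity of the dilate of $Q$ of volume $|\Omega_t|$. Hence $|W_t|\leq|\Omega_t|$ for every $t$, and the layer-cake formula gives
\[
\|v^\#\|_q^q=\int_0^\infty q\,t^{q-1}|W_t|\,dt\leq\int_0^\infty q\,t^{q-1}|\Omega_t|\,dt=\|v\|_q^q,
\]
with equality only if $|W_t|=|\Omega_t|$ for a.e.\ $t$. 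So Saint-Venant pushes the $L^q$ norm down, not up. If (b) held, every minimizer on every $\Omega$ would have level sets attaining equality in the anisotropic Saint-Venant inequality, i.e.\ Wulff shapes, which is false in general.

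The missing idea is the reparametrization of heights. This is why Kohler-Jobin's construction is a symmetrization with \emph{equal Dirichlet integrals}: the Wulff shapes are smaller than the level sets, so the values must be raised, and the raise is paid for by slack in the energy comparison. Set $\{v^\#>\phi(t)\}=W_t$ with $\phi(0)=0$. In your smooth setting, Hadamard's formula gives $-\tau'(t)=\int_{\{v=t\}}h_Q(\nabla u_t)^p|\nabla v|^{-1}\,d\mathcal{H}^{n-1}$, with $u_t$ the torsion function of $\Omega_t$. The divergence theorem gives $|\Omega_t|=\int_{\{v=t\}}h_Q(\nabla u_t)^{p-1}h_Q(\nu)\,d\mathcal{H}^{n-1}$. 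Hölder's inequality and the coarea formula then yield
\[
\int h_Q(\nabla v)^p\,dx\geq\int_0^M|\Omega_t|^p\big(-\tau'(t)\big)^{1-p}\,dt,\qquad\int h_Q(\nabla v^\#)^p\,dx=\int_0^M\phi'(t)^p\,|W_t|^p\big(-\tau'(t)\big)^{1-p}\,dt,
\]
where $M=\operatorname{ess\,sup}v$. The second relation is an identity because $v^\#$ and the torsion functions of the $W_t$ are all functions of $\|x\|_Q$, so Hölder is sharp there. Thus your (a) is true for $\phi(t)=t$, with room to spare. Choosing $\phi'(t)=|\Omega_t|/|W_t|\geq1$ spends that room without increasing the energy and gives $\phi(t)\geq t$. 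Then
\[
\|v^\#\|_q^q=\int_0^M q\,\phi^{q-1}\phi'\,|W_t|\,dt=\int_0^M q\,\phi^{q-1}|\Omega_t|\,dt\geq\|v\|_q^q.
\]
Kohler-Jobin and Brasco also replace $T^{Q}_p(\Omega_t)$ by a modified torsional rigidity, in which the supremum is restricted to test functions of the form $f\circ v$. By the coarea formula this is an explicit one-dimensional quantity. It avoids both the shape derivative of $T^{Q}_p(\Omega_t)$ and the regularity of the level sets of a minimizer, which your approximation step would otherwise have to supply.
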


In \cite{Brasco2014} the energy is written as $\int\|\nabla v\|_{Q'}^p\,dx$, where $Q'=Q^\circ$ is the unit ball of the norm, the torsional rigidity is the $(p-1)$-th power of ours, and the exponent $\alpha(p,q,n)$ there is our $\sigma$. As all the quantities involved are unchanged by modifications of $\Omega$ of zero Sobolev $p$-capacity, the equality case is to be understood up to such sets.

By Lemma \ref{lem:admissible}, every norm $h(\Gamma_pL,\cdot)$ satisfies the hypotheses of Theorem \ref{thm:Brasco}, since its unit ball is $(\Gamma_pL)^\circ$. The proof of Theorem \ref{thm:KJ} uses that, by \eqref{eq:envelope}, $\Lambda^{\mathcal{A}}_{p,q}$ is the infimum and $T^{\mathcal{A}}_p$ the supremum of the corresponding anisotropic quantities over the same family of norms, and that they enter \eqref{eq:KJ} with exponents of opposite signs.

\begin{proof}[Proof of Theorem \ref{thm:KJ}]
Let $u$ be a minimizer for $\Lambda^{\mathcal{A}}_{p,q}(\Omega)$ with $\|u\|_q=1$ (Proposition \ref{prop:attain}), let $L_u$ be given by \eqref{eq:Lf} and set $Q=\Gamma_pL_u$. By Lemma \ref{lem:frozen},
\[
\Lambda^{\mathcal{A}}_{p,q}(\Omega)=\mathcal{E}_p(u)^p=\int_\Omega h_Q(\nabla u)^p\,dx\geq\Lambda^{Q}_{p,q}(\Omega),
\]
and, since $\mathcal{E}_p(v)^p\leq\int_\Omega h_Q(\nabla v)^p\,dx$ for every $v\in W^{1,p}_0(\Omega)$, also $T^{\mathcal{A}}_p(\Omega)\geq T^{Q}_p(\Omega)$. By Lemma \ref{lem:admissible}, Theorem \ref{thm:Brasco} applies to $Q$, and by Lemma \ref{lem:Wulff}(i) and $|L_u|=\omega_n$ we obtain
\begin{equation}\label{eq:KJchain}
T^{\mathcal{A}}_p(\Omega)^{(p-1)\sigma}\,\Lambda^{\mathcal{A}}_{p,q}(\Omega)\geq T^{Q}_p(\Omega)^{(p-1)\sigma}\,\Lambda^{Q}_{p,q}(\Omega)\geq T^{Q}_p(Q)^{(p-1)\sigma}\,\Lambda^{Q}_{p,q}(Q)=\Big(\frac{|\Gamma_pL_u|}{|L_u|}\Big)^{\theta}\,\kappa,
\end{equation}
where $\theta$ and $\kappa$ are as in \eqref{eq:Wulff}. By the $L_p$ Busemann--Petty centroid inequality \eqref{eq:BP}, the right-hand side is at least $\kappa$, which proves \eqref{eq:KJ}.

If equality holds in \eqref{eq:KJ}, then all the inequalities in \eqref{eq:KJchain} are equalities and $|\Gamma_pL_u|=|L_u|$, because $\theta>0$. By the equality case of \eqref{eq:BP}, $L_u$ is an origin-symmetric ellipsoid, and then $Q=\Gamma_pL_u=L_u$, since $\Gamma_p$ commutes with linear maps and $\Gamma_pB^n=B^n$. Equality in Theorem \ref{thm:Brasco} shows that $\Omega$ is a translate of a dilate of the ellipsoid $Q$, up to a set of zero Sobolev $p$-capacity. Conversely, if $\Omega$ is an ellipsoid up to a set of zero Sobolev $p$-capacity, Theorem \ref{thm:iso} and Corollary \ref{cor:SV} give $\Lambda^{\mathcal{A}}_{p,q}(\Omega)=\Lambda_{p,q}(\Omega^\star)$ and $T^{\mathcal{A}}_p(\Omega)=T_p(\Omega^\star)$, and equality in \eqref{eq:KJ} follows from the invariance of the left-hand side of \eqref{eq:KJEuclid} under dilations.
\end{proof}

\begin{remark}\label{rem:hierarchy}
As in \cite[Remark 1.2]{Brasco2014}, Theorem \ref{thm:KJ} and Corollary \ref{cor:SV} give, for $1<q<p^*$,
\[
\Lambda^{\mathcal{A}}_{p,q}(\Omega)\geq\Big(\frac{T_p(\Omega^\star)}{T^{\mathcal{A}}_p(\Omega)}\Big)^{(p-1)\sigma}\Lambda_{p,q}(\Omega^\star)\geq\Lambda_{p,q}(\Omega^\star),
\]
where the first inequality is \eqref{eq:KJ} together with the dilation invariance of its right-hand side, and the second one is the affine Saint-Venant inequality. Thus the three classical inequalities of Saint-Venant, Faber--Krahn and Kohler-Jobin hold in affine form, with ellipsoids as the only extremals.
\end{remark}

\begin{remark}\label{rem:p2KJ}
Since $T^{\mathcal{A}}_p\geq T_p$ and $\Lambda^{\mathcal{A}}_{p,q}\leq\Lambda_{p,q}$, inequality \eqref{eq:KJ} is not, in general, a strengthening of \eqref{eq:KJEuclid} in the sense in which \eqref{eq:iso} strengthens \eqref{eq:FK}. For $p=2$, Proposition \ref{prop:p2} shows that $\Lambda^{\mathcal{A}}_{2,q}(\Omega)$ and $T^{\mathcal{A}}_2(\Omega)$ are the minimum and the maximum of $\Lambda_{2,q}$ and $T_2$ over the images $A\Omega$, $A\in\mathrm{SL}(n)$. If $A_0\Omega$ minimizes $\Lambda_{2,q}$ among these images, then
\[
T^{\mathcal{A}}_2(\Omega)^{\sigma}\,\Lambda^{\mathcal{A}}_{2,q}(\Omega)\geq T_2(A_0\Omega)^{\sigma}\,\Lambda_{2,q}(A_0\Omega),
\]
so that Theorem \ref{thm:KJ} for $p=2$, including its equality case, follows from \eqref{eq:KJEuclid} applied to $A_0\Omega$. For $p\neq2$ the anisotropic norm $h(\Gamma_pL_u,\cdot)$ is not ellipsoidal in general, and the proof above requires Brasco's anisotropic inequality together with the $L_p$ Busemann--Petty centroid inequality. Conversely, \eqref{eq:KJ} implies \eqref{eq:KJEuclid} for every $\Omega$ with $T^{\mathcal{A}}_p(\Omega)=T_p(\Omega)$, since then the left-hand side of \eqref{eq:KJEuclid} is at least that of \eqref{eq:KJ}.
\end{remark}

\section{Stability of the \texorpdfstring{$L_p$}{Lp} Busemann--Petty centroid inequality}\label{sec:BPstab}

We begin with the stability version of the $L_p$ Petty projection inequality due to B\"or\"oczky.

\begin{theorem}[{\cite[Theorem 4]{Boroczky2013}}]\label{thm:Boroczky}
Let $p\geq1$. There exists $\gamma_0=\gamma_0(n,p)>0$ such that
\[
|M|^{\frac{n-p}{p}}\,|\Pi^*_pM|\leq\Big(1-\gamma_0\,\delta_{\mathrm{BM}}(M,B^n)^{840(n+p)}\Big)\,\omega_n^{\frac np}
\]
for every origin-symmetric convex body $M\subset\mathbb{R}^n$.
\end{theorem}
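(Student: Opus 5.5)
The plan is not to reprove this from scratch but to deduce it from B\"or\"oczky's stability theorem for the Orlicz--Petty projection inequality \cite{Boroczky2013}, specialized to $\varphi(t)=|t|^p$, and then to check that the normalizations agree with those fixed in \eqref{eq:GammaPi}.

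\emph{Step 1: identify the Orlicz body.} For a convex body $M\in\mathcal{K}^n_o$, the Orlicz projection body $\Pi_\varphi M$ is defined implicitly through its gauge by
\[
\int_{S^{n-1}}\varphi\Big(\frac{\langle x,v\rangle}{\lambda\,h_M(v)}\Big)h_M(v)\,dS_M(v)=n|M|,\qquad \lambda=h(\Pi_\varphi M,x).
\]
With $\varphi(t)=|t|^p$ this can be solved explicitly for $\lambda$. Comparing the result with \eqref{eq:GammaPi} gives $\Pi_\varphi M=c\,|M|^{-1/p}\,\Pi_pM$ for an explicit constant $c=c(n,p)$. Consequently
\[
|\Pi^\circ_\varphi M|=c^{-n}|M|^{n/p}\,|\Pi^*_pM|.
\]

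\emph{Step 2: transfer the stability estimate.} B\"or\"oczky's result says that
\[
|M|^{-1}|\Pi^\circ_\varphi M|\le\big(1-\gamma_0\,\delta_{\mathrm{BM}}(M,B^n)^{840(n+p)}\big)\,\omega_n^{-1}|\Pi^\circ_\varphi B^n|
\]
for origin-symmetric $M$, where the exponent depends only on $n$ and on the growth of $\varphi$, giving $840(n+p)$ for $|t|^p$. Multiplying through, the constant $c$ cancels because it appears on both sides. Using $\Pi_pB^n=B^n$, the right-hand side becomes $\omega_n^{n/p}$ times the deficit factor, which is exactly the displayed inequality. The scaling exponent $|M|^{\frac{n-p}{p}}$ can be checked independently, since $\Pi^*_p(tM)=t^{-(n-p)/p}\Pi^*_pM$ makes both sides of the claimed inequality dilation invariant. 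The equality case $M=B^n$ serves as a consistency check.

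\emph{Main obstacle.} The bookkeeping of normalizations is the main obstacle. B\"or\"oczky's Orlicz normalization uses $n|M|$ and the cone-volume measure, whereas \eqref{eq:GammaPi} uses $a_{n,p}$ and $n\omega_n$. One must also verify that his hypotheses on $\varphi$ (convexity, $\varphi(0)=0$, evenness) are satisfied for $p\ge1$, including $p=1$, where strict convexity fails and one appeals to his separate treatment.

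Should one want an independent argument, B\"or\"oczky's strategy is as follows. One first uses John's position and Steiner symmetrization, under which $|\Pi^\circ_\varphi|$ does not decrease, to reduce to bodies that are far from ellipsoids in a quantitative sense. One then exploits a stability version of the fact that Steiner symmetrization strictly improves the functional unless $M$ is already symmetric in that direction. The polynomial dependence on $\delta_{\mathrm{BM}}$, with its large exponent, comes from iterating the symmetrizations and from the quantitative Blaschke-type characterization of ellipsoids.
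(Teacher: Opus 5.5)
Your proposal is correct and follows the paper's argument. The paper specializes B\"or\"oczky's stability theorem for the Orlicz--Petty projection inequality to $\varphi(t)=|t|^p$, whose error term $\delta^{cn}(\varphi(\delta^c)+\varphi(-\delta^c))$ with $c=840$ becomes $2\delta^{840(n+p)}$, and then reconciles normalizations exactly as you do via $\Pi_\varphi M=(\omega_na_{n,p})^{1/p}|M|^{-1/p}\Pi_pM$. The paper also notes two details. First, B\"or\"oczky's Banach--Mazur distance allows translations, but it agrees with $\delta_{\mathrm{BM}}$ on origin-symmetric bodies. Second, no separate $p=1$ case is invoked, because $|t|^p$ is admissible in that single theorem for every $p\geq1$.
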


For $\gamma_0=0$ this is the $L_p$ Petty projection inequality of Lutwak, Yang and Zhang \cite{LYZ2000}. Theorem \ref{thm:Boroczky} is the case $\varphi(t)=|t|^p$ of \cite[Theorem 4]{Boroczky2013}, which is stated for the Orlicz projection bodies of convex bodies containing the origin in their interiors: in that case the error term $\delta^{cn}\,(\varphi(\delta^c)+\varphi(-\delta^c))$, $c=840$, equals $2\delta^{840(n+p)}$. There the result is formulated for the ratio $|\Pi^*_pM|/|M|$ with a normalization of $\Pi_pM$ that differs from \eqref{eq:GammaPi} by a factor depending only on $n$, $p$ and $|M|$; with the normalization \eqref{eq:GammaPi}, this ratio becomes a constant multiple of $|M|^{(n-p)/p}|\Pi^*_pM|$. For origin-symmetric bodies, the Banach--Mazur distance of \cite{Boroczky2013}, which allows translations, coincides with ours.

The class reduction technique rests on the following identity of Lutwak, Yang and Zhang \cite{LYZ2000}; we include the short proof to make the normalizations explicit.

\begin{lemma}\label{lem:classreduction}
Let $p\geq1$, $M\in\mathcal{K}^n_o$ and let $L$ be a star body. Then
\[
V_p(M,\Gamma_pL)=\frac{\omega_n}{|L|}\,\widetilde{V}_{-p}(L,\Pi^*_pM).
\]
\end{lemma}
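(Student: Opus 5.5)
The plan is a direct computation: expand the left-hand side using the definition of $\Gamma_pL$, exchange the order of integration, recognize the support function of $\Pi_pM$, and pass to polar coordinates over $L$. The normalizations in \eqref{eq:GammaPi} are designed so that all constants cancel, so the main task is to track them.

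First I write
\[
V_p(M,\Gamma_pL)=\frac1n\int_{S^{n-1}}h(\Gamma_pL,v)^p\,h_M(v)^{1-p}\,dS_M(v).
\]
Then I insert the definition
\[
h(\Gamma_pL,v)^p=\frac{n+p}{n\,a_{n,p}\,|L|}\int_L|\langle v,y\rangle|^p\,dy.
\]
All integrands are nonnegative and $h_M$ is bounded below on $S^{n-1}$ because $M\in\mathcal{K}^n_o$. So Tonelli's theorem lets me exchange the integrals over $S^{n-1}$ and $L$. The inner integral becomes
\[
\int_{S^{n-1}}|\langle y,v\rangle|^p h_M(v)^{1-p}\,dS_M(v)=n\omega_n a_{n,p}\,h(\Pi_pM,y)^p,
\]
by the definition of $\Pi_pM$ in \eqref{eq:GammaPi}. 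The factors $a_{n,p}$ cancel, and one factor $n$ cancels, which gives
\[
V_p(M,\Gamma_pL)=\frac{(n+p)\,\omega_n}{n\,|L|}\int_L h(\Pi_pM,y)^p\,dy.
\]

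Next I write $y=r\xi$ with $0\le r\le\rho_L(\xi)$. The function $h(\Pi_pM,\cdot)^p$ is $p$-homogeneous, and $L$ is a star body, so
\[
\int_L h(\Pi_pM,y)^p\,dy=\int_{S^{n-1}}h(\Pi_pM,\xi)^p\int_0^{\rho_L(\xi)}r^{n+p-1}\,dr\,d\xi=\frac1{n+p}\int_{S^{n-1}}\rho_L^{n+p}\,h(\Pi_pM,\cdot)^p\,d\xi .
\]
The factor $n+p$ cancels.

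Finally, $h(\Pi_pM,\cdot)=\|\cdot\|_{\Pi^*_pM}=\rho_{\Pi^*_pM}^{-1}$ on $S^{n-1}$, because $\Pi^*_pM=(\Pi_pM)^\circ$. Hence
\[
V_p(M,\Gamma_pL)=\frac{\omega_n}{|L|}\cdot\frac1n\int_{S^{n-1}}\rho_L^{n+p}\rho_{\Pi^*_pM}^{-p}\,d\xi=\frac{\omega_n}{|L|}\widetilde V_{-p}(L,\Pi^*_pM).
\]

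There is no real obstacle. The only delicate points are justifying Fubini, which holds by nonnegativity and integrability since $S_M$ is finite and $h_M$ is bounded below, and the bookkeeping of the constants $a_{n,p}$, $n$ and $n+p$.
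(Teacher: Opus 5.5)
Your proof is correct and is essentially the paper's argument: the same direct computation using Fubini, the definition of $\Pi_pM$ in \eqref{eq:GammaPi}, polar coordinates, and $h(\Pi_pM,\cdot)=\rho_{\Pi^*_pM}^{-1}$. The only difference is the order of steps: the paper passes to polar coordinates in $h(\Gamma_pL,\cdot)^p$ before exchanging integrals over $S^{n-1}\times S^{n-1}$, whereas you exchange over $L\times S^{n-1}$ first and use polar coordinates afterwards; your constants all check out.
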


\begin{proof}
By \eqref{eq:GammaPi} and polar coordinates, $h(\Gamma_pL,v)^p=\frac{1}{n a_{n,p}|L|}\int_{S^{n-1}}\rho_L(\xi)^{n+p}|\langle v,\xi\rangle|^p\,d\xi$. By Fubini's theorem and \eqref{eq:GammaPi} again,
\begin{align*}
V_p(M,\Gamma_pL)&=\frac{1}{n^2a_{n,p}|L|}\int_{S^{n-1}}\rho_L(\xi)^{n+p}\Big(\int_{S^{n-1}}|\langle v,\xi\rangle|^p\,h_M(v)^{1-p}\,dS_M(v)\Big)\,d\xi\\
&=\frac{\omega_n}{n|L|}\int_{S^{n-1}}\rho_L(\xi)^{n+p}\,h(\Pi_pM,\xi)^p\,d\xi,
\end{align*}
and $h(\Pi_pM,\xi)=\rho_{\Pi^*_pM}(\xi)^{-1}$.
\end{proof}

We also need the following quantitative improvement of \eqref{eq:dualmixed}, due to Nguyen.

\begin{lemma}[{\cite[Proposition 2.1]{Nguyen2016}}]\label{lem:Nguyen}
Let $p\geq1$ and let $L$, $Q$ be star bodies. Then
\[
\widetilde{V}_{-p}(L,Q)\geq|L|^{\frac{n+p}{n}}|Q|^{-\frac pn}\Big(1+\frac{p}{8n}\,\eta^2\Big),\qquad\eta=\frac{|L\,\triangle\,(aQ)|}{|L|},\quad a=\Big(\frac{|L|}{|Q|}\Big)^{1/n}.
\]
\end{lemma}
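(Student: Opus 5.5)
By homogeneity, the plan is to reduce to $|Q|=|L|$ and then rewrite the dual mixed volume as the integral of a convex function against a probability measure on $S^{n-1}$. The quantity $\eta^2$ will then come out of a pointwise uniform-convexity estimate combined with Jensen's inequality.

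\emph{Normalization.} Since $\widetilde V_{-p}(L,tQ)=t^{-p}\widetilde V_{-p}(L,Q)$ and the right-hand side scales the same way, and $\eta$ is defined through $aQ$, replace $Q$ by $aQ$ and assume $|Q|=|L|$, so $a=1$. Set $s=p/n>0$ and define the probability measure $d\mu=\rho_Q^n\,d\xi/(n|Q|)$ on $S^{n-1}$. With $g=(\rho_L/\rho_Q)^n$, polar coordinates give $\int g\,d\mu=|L|/|Q|=1$ and
\[
\frac{\widetilde V_{-p}(L,Q)}{|L|}=\int_{S^{n-1}}g^{1+s}\,d\mu,\qquad \eta=\frac{1}{n|L|}\int_{S^{n-1}}|\rho_L^n-\rho_Q^n|\,d\xi=\int_{S^{n-1}}|g-1|\,d\mu .
\]
The claim therefore becomes
\[
\int\Psi(g)\,d\mu\geq\frac{s}{8}\Big(\int|g-1|\,d\mu\Big)^2,\qquad \Psi(t)=t^{1+s}-1-(1+s)(t-1)\geq0.
\]
The linear term in $\Psi$ integrates to zero because $\int g\,d\mu=1$, so $\int\Psi(g)\,d\mu=\int g^{1+s}\,d\mu-1$.

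\emph{Pointwise bounds for $\Psi$.} I would prove two elementary estimates with constants uniform in $s>0$:
\begin{itemize}
\item $\Psi(t)\geq c_1 s\,(t-1)^2$ for $0\leq t\leq2$;
\item $\Psi(t)\geq c_2 s\,(t-1)$ for $t\geq2$.
\end{itemize}
The second follows from the first together with convexity. Since $\Psi(1)=\Psi'(1)=0$, the quotient $\Psi(t)/(t-1)$ is nondecreasing for $t>1$, so for $t\geq2$ it is at least $\Psi(2)\geq c_1 s$. For the first, use Taylor's formula with integral remainder, $\Psi(t)=(1+s)s\int_1^t(t-r)\,r^{s-1}\,dr$, and bound $r^{s-1}$ on the relevant interval.
\begin{itemize}
\item If $s\leq1$, then $r^{s-1}\geq 2^{s-1}\geq\frac12$ for $r\in[0,2]$, which gives $\Psi(t)\geq\frac{s}{4}(t-1)^2$.
\item If $s>1$, then $r^{s-1}\geq1$ for $t\in[1,2]$. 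For $t<1$, the factor $r^{s-1}$ degenerates near $0$, so I would bound $\Psi(t)$ below directly. One way is to note that $\Psi(t)/(t-1)^2$ is continuous and positive on $[0,1]$ and check its minimum; at $t=0$ it equals $s$. Another is to keep the factor $(1+s)$ and use that it compensates.
\end{itemize}
Getting constants uniform in $s$ that are good enough to produce $p/(8n)$ is the step I expect to be the main obstacle. I would first try to show $\Psi(t)\geq\frac{s}{4}\min\{(t-1)^2,\,|t-1|\}$ on $[0,\infty)$.

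\emph{Integration.} Split $S^{n-1}$ into $\{g\leq2\}$ and $\{g>2\}$, and let $A$ and $B$ be the corresponding contributions to $\int|g-1|\,d\mu$, so $\eta=A+B$.
\begin{itemize}
\item By Cauchy–Schwarz with respect to the probability measure $\mu$, $A^2\leq\int_{\{g\leq2\}}(g-1)^2\,d\mu$.
\item Since $\eta\leq\int(g+1)\,d\mu=2$, we have $B^2\leq 2B$.
\end{itemize}
Combining these with the pointwise bounds gives
\[
\int\Psi(g)\,d\mu\geq c_1 s A^2+c_2 s B\geq s\min\{c_1,c_2/2\}(A^2+B^2)\geq\frac{s}{2}\min\{c_1,c_2/2\}\,\eta^2 .
\]
This yields the lemma provided the constants satisfy $\min\{c_1,c_2/2\}\geq\frac14$, which is how the constant $\frac{p}{8n}$ should emerge. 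If the constants from the pointwise step come out slightly worse, the same argument still gives the statement with a smaller absolute constant. I would then refine the pointwise estimate near $t=0$ for $s>1$ until it reaches the exact value $\frac{p}{8n}$.
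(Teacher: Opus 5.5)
The paper does not prove this lemma; it quotes it from \cite[Proposition 2.1]{Nguyen2016}, so your argument has to stand on its own.

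\textbf{What is correct.} The normalization $|Q|=|L|$ is valid. So are the identities $\widetilde V_{-p}(L,Q)/|L|=\int g^{1+s}\,d\mu$ and $\eta=\int|g-1|\,d\mu$, and the reduction to $\int\Psi(g)\,d\mu\geq\frac s8\eta^2$. Splitting at $g=2$ does prove the inequality with \emph{some} constant $c\,p/n$.

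\textbf{Where the constant is lost.} As written, the argument does not reach $\frac{p}{8n}$, and the difficulty is not where you put it.

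The case $s>1$, $t<1$ is harmless. Write $\Psi(t)=(1-t)^2\int_0^1u\,\Psi''(t+(1-t)u)\,du$. For $s\geq1$, $\Psi''(r)=s(1+s)r^{s-1}$ is nondecreasing and $t+(1-t)u\geq u$, so this is at least $(1-t)^2\int_0^1u\,\Psi''(u)\,du=s(1-t)^2$. Together with your other cases, $c_1=\frac14$ holds uniformly in $s>0$.

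The real obstruction is $c_2$. By the monotonicity of $\Psi(t)/(t-1)$, the best constant is exactly $c_2=\Psi(2)/s=(2^{1+s}-2-s)/s$. This tends to $2\log2-1<\frac12$ as $s\to0$. Hence your sufficient condition $\min\{c_1,c_2/2\}\geq\frac14$ is false for small $p/n$, and sharpening the pointwise bounds (near $t=0$ or elsewhere) cannot restore it. Your target estimate $\Psi(t)\geq\frac s4\min\{(t-1)^2,|t-1|\}$, fed into your integration step, yields only $\frac{p}{16n}\eta^2$.

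\textbf{The fix.} Use the constraint $\int(g-1)\,d\mu=0$, which you derived but did not use in the splitting. It gives $\int(g-1)_+\,d\mu=\int(g-1)_-\,d\mu=\frac\eta2\leq1$, the last inequality because $(g-1)_-\leq1$. Hence $B\leq\frac\eta2\leq1$ and $B\geq B^2$, which replaces the lossy $B\geq\frac12B^2$. With $c_1=\frac14$ and $c_2=\Psi(2)/s\geq c_1$,
\[
\int\Psi(g)\,d\mu\geq\frac s4A^2+\frac s4B\geq\frac s4\big(A^2+B^2\big)\geq\frac s8(A+B)^2=\frac{p}{8n}\,\eta^2,
\]
which is exactly the stated constant.

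For the paper's purposes this precision is not essential. In the proof of Theorem~\ref{thm:BPstab}, the factor $\frac{p}{8n}$ is absorbed into a constant depending only on $n$ and $p$, so your weaker $\frac{p}{16n}$ version would already suffice there.
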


We also recall how the symmetric difference controls the Banach--Mazur distance. The following estimate follows from \cite[(5.3)]{Nguyen2016}, where it is stated without proof as well known; for the sake of completeness, we include a short proof.

\begin{lemma}\label{lem:symdiff}
There exists $c_1=c_1(n)>0$ such that $|L\,\triangle\,Q|\geq c_1\,|L|\,\delta_{\mathrm{BM}}(L,Q)^n$ for all origin-symmetric convex bodies $L,Q\subset\mathbb{R}^n$ with $|L|=|Q|$.
\end{lemma}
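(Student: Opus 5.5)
Write $\delta=\delta_{\mathrm{BM}}(L,Q)$ and $\epsilon=|L\triangle Q|/|L|$. Since $\delta\le\log n$ by John's theorem (Section \ref{sec:convex}), it suffices to show two things: that $\delta\le C(n)\,\epsilon^{1/n}$ whenever $\epsilon\le\epsilon_0(n)$, and that for $\epsilon\ge\epsilon_0$ the claim holds trivially with $c_1\le\epsilon_0/(\log n)^n$. Both the symmetric difference and the normalization $|L|=|Q|$ are invariant under a common $\mathrm{SL}(n)$ map, and so is $\delta$. We may therefore apply John's theorem to $L$ and assume $B^n\subseteq L\subseteq\sqrt n\,B^n$, with $|L|=|Q|$ comparable to $\omega_n$ up to factors depending only on $n$.

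The key step is a containment estimate. We show that if $x\in Q$ with $\|x\|_L=1+t$, $t>0$, then $|Q\setminus L|\ge c(n)\min\{t,1\}^n$. By symmetry and convexity, $Q$ contains $\operatorname{conv}(\{\pm x\}\cup(Q\cap L))$, so we first need $Q\cap L$ to be large. The volume condition gives $|Q\cap L|=|L|-|L\setminus Q|\ge|L|(1-\epsilon)$. Then $Q\cap L$ is an origin-symmetric convex set of volume at least $\tfrac12|L|$ inside $\sqrt n B^n$, so it contains a ball $r_0B^n$ with $r_0=r_0(n)$, by the slab argument already used in the proof of Theorem \ref{thm:potential}.

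Let $y=x/\|x\|_L\in\partial L$. The cone $\operatorname{conv}(\{x\}\cup r_0B^n)$ lies in $Q$, and its part beyond the supporting hyperplane of $L$ at $y$ lies outside $L$. That part is a cap of height $\gtrsim t\,r_0/\sqrt n$, and by similarity it contains a ball of radius $\gtrsim t r_0/n$, since $x$ lies at distance $\le(1+t)\sqrt n$ from the origin. This gives $|Q\setminus L|\ge c(n)\,t^n$ for $t\le1$. Hence $Q\subseteq(1+C\epsilon^{1/n})L$.

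The same argument with the roles exchanged needs $Q$ to be bounded and to contain a ball. This follows from what we just proved: $Q\subseteq 2L\subseteq 2\sqrt nB^n$ and $Q\supseteq Q\cap L\supseteq r_0B^n$, so the cone construction gives $L\subseteq(1+C\epsilon^{1/n})Q$. Combining the two inclusions, $Q\subseteq(1+C\epsilon^{1/n})L\subseteq(1+C\epsilon^{1/n})^2Q$, so $\delta\le 2\log(1+C\epsilon^{1/n})\le 2C\epsilon^{1/n}$. This is the claim, with $c_1=(2C)^{-n}$ after also comparing with the trivial range.

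The main obstacle is getting the cap volume bound uniform in $n$-dependent constants only. In particular, we must justify that the portion of the cone outside $L$ contains a ball of radius proportional to $t$. I would handle this by noting that the homothety with center $x$ and ratio $t/(1+t)$ maps $r_0B^n$ into the cone near $x$. Every point $z$ of that image satisfies $\|z\|_L\ge\langle z,\nu\rangle/h_L(\nu)$, where $\nu$ is the outer normal at $y$. A direct estimate using $h_L(\nu)\le\sqrt n$ and $\langle y,\nu\rangle=h_L(\nu)$ shows the image lies outside $L$ once its radius is $\le t/(2n)$ times a constant.
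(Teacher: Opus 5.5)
Your proposal is correct and follows essentially the paper's argument: John position for $L$, a ball around the origin inside $Q$, a small homothetic copy near $x$ of the convex hull of $x$ and that ball giving a ball of radius comparable to $t$ inside $Q\setminus L$, and then the same argument with the roles of $L$ and $Q$ exchanged. The differences are minor: you get $r_0B^n\subseteq Q\cap L$ from the slab-volume bound where the paper gets $\frac12B^n\subseteq Q$ from a cap of $B^n$, and you detect exit from $L$ with the supporting hyperplane at $y$ where the paper uses $\|(1-s)x+sb\|_L\geq(1-s)(1+t)-s/2$.

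One detail needs fixing. The homothety centered at $x$ with ratio exactly $t/(1+t)$ sends the origin to $y\in\partial L$, so its image ball is centered on $\partial L$ and is never outside $L$, whatever its radius. You should take a smaller ratio, e.g. $\lambda=t/(2(2+t))$ (compare the paper's $s_0=t/(2t+3)$). For this $\lambda$, every point $z$ of the image satisfies $\langle z,\nu\rangle\geq(1-\lambda)(1+t)h_L(\nu)-\lambda r_0>h_L(\nu)$, using $h_L(\nu)\geq r_0$.
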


\begin{proof}
Both sides are invariant under simultaneous linear images of $L$ and $Q$, so by John's theorem we may assume $B^n\subseteq L\subseteq\sqrt nB^n$. Let $\varepsilon=|L\triangle Q|/|L|$ and let $\beta_n=|\{x\in B^n:\langle x,v\rangle>\frac12\}|$ for $v\in S^{n-1}$. Since $\delta_{\mathrm{BM}}(L,Q)\leq\log n$, it suffices to consider $\varepsilon<\varepsilon_0=\min\{\beta_n\omega_n^{-1}n^{-n/2},(20\sqrt n)^{-n}\}$.

First, $\frac12B^n\subseteq Q$. Otherwise there is $y\in\partial Q$ with $|y|<\frac12$; a supporting hyperplane of $Q$ at $y$ and the symmetry of $Q$ give $w\in S^{n-1}$ with $Q\subseteq\{|\langle x,w\rangle|\leq\frac12\}$, whence $|L\setminus Q|\geq|\{x\in B^n:\langle x,w\rangle>\frac12\}|=\beta_n\geq\varepsilon_0|L|$, a contradiction.

Next, let $x\in Q$ with $\|x\|_L=1+t$, $t>0$. For $b\in\frac12B^n$ and $s\in[0,1]$, the point $(1-s)x+sb$ belongs to $Q$, and since $\|b\|_L\leq|b|\leq\frac12$,
\[
\|(1-s)x+sb\|_L\geq(1-s)(1+t)-\tfrac s2>1\qquad\text{if }s<\tfrac{2t}{2t+3}.
\]
With $s_0=\frac{t}{2t+3}$, the ball $(1-s_0)x+\frac{s_0}2B^n$ is contained in $Q\setminus L$, and therefore $\omega_n(s_0/2)^n\leq\varepsilon|L|\leq\varepsilon\,\omega_nn^{n/2}$, that is, $s_0\leq2\sqrt n\,\varepsilon^{1/n}<\frac1{10}$. Hence $t\leq1$, $s_0\geq t/5$, and $t\leq10\sqrt n\,\varepsilon^{1/n}$; thus $Q\subseteq(1+10\sqrt n\,\varepsilon^{1/n})L$. The same argument with the roles of $L$ and $Q$ exchanged, now using $\frac14B^n\subseteq L$ and $\|b\|_Q\leq2|b|\leq\frac12$ for $b\in\frac14B^n$, gives $L\subseteq(1+20\sqrt n\,\varepsilon^{1/n})Q$. Therefore $\delta_{\mathrm{BM}}(L,Q)\leq30\sqrt n\,\varepsilon^{1/n}$.
\end{proof}

The following Lipschitz-type estimate will also be needed.

\begin{lemma}\label{lem:Lipschitz}
Let $p\geq1$. There exists $C=C(n,p)$ such that $\delta_{\mathrm{BM}}(\Pi^*_pM,B^n)\leq C\,\delta_{\mathrm{BM}}(M,B^n)$ for every origin-symmetric convex body $M\subset\mathbb{R}^n$.
\end{lemma}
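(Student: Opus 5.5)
By linear invariance we may normalize $M$ so that $B^n\subseteq M\subseteq e^{\delta}B^n$, where $\delta=\delta_{\mathrm{BM}}(M,B^n)$. This is legitimate because $\Pi^*_p(AM)=|\det A|^{-1/p}A\,\Pi^*_pM$ for $A\in\mathrm{GL}(n)$, and $\delta_{\mathrm{BM}}$ is invariant under linear images and dilations. It then suffices to exhibit constants $0<r\leq R$ with $rB^n\subseteq\Pi^*_pM\subseteq RB^n$ and $\log(R/r)\leq C\delta$. Equivalently, we need two-sided bounds on the support function $h(\Pi_pM,x)$ for $x\in S^{n-1}$, with ratio at most $e^{C\delta}$.

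Since $B^n\subseteq M$, the origin is interior and we may use the formula \eqref{eq:GammaPi} for $h(\Pi_pM,x)^p$. One could try to compare $h_M^{1-p}\,dS_M$ directly with the corresponding measure for $B^n$, but the surface area measure is not monotone under inclusion. Instead we use the $L_p$ cone-volume interpretation. The measure $\frac1n h_M^{1-p}h_M^{p}\,dS_M=\frac1n h_M\,dS_M$ is the cone-volume measure, and
\[
h(\Pi_pM,x)^p=\frac{1}{n\omega_na_{n,p}}\int_{S^{n-1}}\Big|\Big\langle x,\frac{v}{h_M(v)}\Big\rangle\Big|^p h_M(v)\,dS_M(v).
\]
Via the radial map $\partial M\ni y\mapsto$ outer normal $v$, this equals $\frac{n}{n\omega_n a_{n,p}}\int_{\partial M}|\langle x,\nu(y)\rangle/h_M(\nu(y))|^p\,\frac{\langle y,\nu(y)\rangle}{n}\,d\mathcal H^{n-1}(y)$.

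We then bound each factor using $1\leq h_M\leq e^{\delta}$ and $1\leq|y|\leq e^\delta$ on $\partial M$. Since $h_M(v)\in[1,e^\delta]$, the factor $h_M^{-p}$ lies in $[e^{-p\delta},1]$. The remaining quantity $\int|\langle x,\nu\rangle|^p\langle y,\nu\rangle\,d\mathcal H^{n-1}$ is compared with the ball integral $\int_{S^{n-1}}|\langle x,\xi\rangle|^p\,d\xi=n\omega_na_{n,p}$ through the radial projection $\partial M\to S^{n-1}$. Its Jacobian is $\langle y,\nu\rangle|y|^{-n}$, so the cone-volume element equals $|y|^n\,d\xi/n$, which lies within a factor $e^{n\delta}$ of $d\xi/n$.

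\textbf{Main obstacle.} The integrand involves the normal $\nu(y)$ rather than the direction $\xi=y/|y|$, and for a polytope-like $M$ close to the ball these two can differ substantially pointwise. So pointwise comparison fails, and we need an integrated argument. The plan is to use the divergence theorem on the vector field $F(y)=|\langle x,y\rangle|^{p-2}\langle x,y\rangle\,x$ (smoothed if $p<2$), which gives $\int_{\partial M}\langle F,\nu\rangle\,d\mathcal H^{n-1}=\int_M\operatorname{div}F$. Combined with Hölder, this yields two-sided bounds of $h(\Pi_pM,x)^p$ by $L^p$-type integrals over $M$, namely multiples of $\int_M|\langle x,y\rangle|^{p-1}\,dy$-type moments and their duals. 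These are monotone in $M$, hence trapped between the ball values at radii $1$ and $e^\delta$.

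If this divergence route only gives one side, we fall back on a simpler argument. The map $M\mapsto\Pi^*_pM$ is continuous and $\mathrm{GL}(n)$-equivariant, and $\delta_{\mathrm{BM}}\leq\frac12\log n$ always. It then suffices to prove the estimate for $\delta\leq\delta_0$ small, since for larger $\delta$ the bound $\delta_{\mathrm{BM}}(\Pi^*_pM,B^n)\leq\frac12\log n\leq(\log n/(2\delta_0))\,\delta$ holds trivially. For small $\delta$, the lower bound $h(\Pi_pM,x)^p\geq c\,h(\Pi_1 M,x)\cdot(\text{const})$ follows by Hölder. The classical projection body satisfies $h(\Pi M,x)=|M|x^\perp|$, which is monotone in $M$ and hence within $e^{(n-1)\delta}$ of the ball value. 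The upper bound follows from $|\langle x,\nu\rangle|^p\leq|\langle x,\nu\rangle|$ together with the same projection estimate. Together these give $\log(R/r)\leq C(n,p)\delta$.
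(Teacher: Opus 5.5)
Your normalization $B^n\subseteq M\subseteq e^{\delta}B^n$ and the reduction to $\delta\leq\delta_0$ are the same as in the paper. Once made precise, the divergence-theorem idea does give one of the two inclusions.

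Let $x\in S^{n-1}$ and $I(x)=\int_{S^{n-1}}|\langle x,v\rangle|^ph_M(v)^{1-p}\,dS_M(v)$. The field $F(y)=|\langle x,y\rangle|^{p-2}\langle x,y\rangle x$ has divergence $(p-1)|\langle x,y\rangle|^{p-2}$, so the relevant moment has exponent $p-2$. Combine this with Hölder's inequality and the identity $\int_{\partial M}|\langle x,y\rangle|^p\langle y,\nu\rangle\,d\mathcal{H}^{n-1}=(n+p)\int_M|\langle x,y\rangle|^p\,dy$. For $p>1$ this gives
\[
I(x)\geq\Big((p-1)\int_M|\langle x,y\rangle|^{p-2}\,dy\Big)^p\Big((n+p)\int_M|\langle x,y\rangle|^p\,dy\Big)^{1-p}\geq e^{-(p-1)(n+p)\delta}\,n\omega_na_{n,p},
\]
with equality throughout for $M=B^n$. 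So $h(\Pi_pM,x)^p\geq e^{-(p-1)(n+p)\delta}$, and this holds for every $\delta$.

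But Hölder can only bound $I(x)$ from below. Nothing in the proposal gives the matching upper bound $I(x)\leq e^{C\delta}n\omega_na_{n,p}$; ``and their duals'' is not an argument. This is exactly where the non-monotonicity of $S_M$ that you identified has to be dealt with.

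The fallback does not repair this, because both comparisons with $\Pi_1M$ lose a fixed factor already at $M=B^n$:
\begin{itemize}
\item The bound $|\langle x,v\rangle|^p\leq|\langle x,v\rangle|$ gives $h(\Pi_pM,x)^p\leq e^{(n-1)\delta}a_{n,1}/a_{n,p}$.
\item Hölder against $\int h_M\,dS_M=n|M|$ gives $h(\Pi_pM,x)^p\geq e^{-n(p-1)\delta}a_{n,1}^p/a_{n,p}$.
\end{itemize}
Since $a_{n,1}^p<a_{n,p}<a_{n,1}$ for $p>1$, you only get $\delta_{\mathrm{BM}}(\Pi^*_pM,B^n)\leq C\delta+\frac{p-1}{p}\log(1/a_{n,1})$. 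The constant term does not vanish as $\delta\to0$, so the claimed $\log(R/r)\leq C\delta$ does not follow. Continuity plus equivariance gives no rate either.

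The missing idea is to write $v\mapsto|\langle x,v\rangle|^p$ as a difference of support functions, so that monotonicity of mixed volumes applies to each piece. The paper shows that $\Phi_x(y)=|\langle x,y\rangle|^p|y|^{1-p}+(p-1)|y|$ is convex. It is the perspective of $\tau\mapsto|\tau|^p+p-1$, which is nondecreasing in the radial variable on $r\geq|s|$. Hence $\Phi_x=h_{Z_x}$ for a convex body $Z_x$, and
\[
\int|\langle x,v\rangle|^p\,dS_M=nV(M[n-1],Z_x)-(p-1)\,nV(M[n-1],B^n).
\]
Each term lies between its ball value and $e^{(n-1)\delta}$ times it. So the difference is $n\omega_na_{n,p}+O(\delta)$ from above and from below. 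Together with $e^{-(p-1)\delta}\leq h_M^{1-p}\leq1$, this yields both inclusions. Your Hölder argument could replace the lower half of that computation, but the upper half needs this device or an equivalent one.
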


\begin{proof}
Fix $x\in S^{n-1}$ and let
\[
\Phi_x(y)=|\langle x,y\rangle|^p\,|y|^{1-p}+(p-1)|y|,\qquad y\in\mathbb{R}^n\setminus\{0\},\qquad\Phi_x(0)=0.
\]
We claim that $\Phi_x$ is convex. The function $G(s,r)=r\,g(s/r)$ with $g(\tau)=|\tau|^p+p-1$ is the perspective of a convex function, hence convex on $\mathbb{R}\times(0,\infty)$, and $\partial_rG(s,r)=(p-1)\big(1-(|s|/r)^p\big)\geq0$ for $r\geq|s|$. Since $\Phi_x(y)=G(\langle x,y\rangle,|y|)$ and $|\langle x,y\rangle|\leq|y|$, for $y=\lambda y_1+(1-\lambda)y_2\neq0$ with $\lambda\in(0,1)$ we obtain
\[
\Phi_x(y)\leq G\big(\langle x,y\rangle,\lambda|y_1|+(1-\lambda)|y_2|\big)\leq\lambda\,\Phi_x(y_1)+(1-\lambda)\,\Phi_x(y_2),
\]
using that $G(\langle x,y\rangle,\cdot)$ is nondecreasing on $[|\langle x,y\rangle|,\infty)$ and the convexity of $G$ (the case where $y_1$ or $y_2$ vanishes is trivial). Hence $\Phi_x$ is the support function of an origin-symmetric convex body $Z_x$, and $|\langle x,v\rangle|^p=h_{Z_x}(v)-(p-1)$ for $v\in S^{n-1}$. Therefore, for every convex body $M$,
\[
\int_{S^{n-1}}|\langle x,v\rangle|^p\,dS_M(v)=n\,V(M[n-1],Z_x)-(p-1)\,n\,V(M[n-1],B^n).
\]

Let $\delta=\delta_{\mathrm{BM}}(M,B^n)$. By the linear covariance of $\Pi^*_p$ and the invariance of $\delta_{\mathrm{BM}}$, we may assume $B^n\subseteq M\subseteq e^\delta B^n$. Then $1\leq h_M\leq e^\delta$ on $S^{n-1}$, and by the monotonicity and homogeneity of mixed volumes, $V(B^n[n-1],Z)\leq V(M[n-1],Z)\leq e^{(n-1)\delta}V(B^n[n-1],Z)$ for $Z\in\{Z_x,B^n\}$. Since $nV(B^n[n-1],Z_x)=\int_{S^{n-1}}\Phi_x\,d\xi=n\omega_n(a_{n,p}+p-1)$ and $nV(B^n[n-1],B^n)=n\omega_n$, we get
\[
n\omega_n\big(a_{n,p}-(p-1)(e^{(n-1)\delta}-1)\big)\leq\int_{S^{n-1}}|\langle x,v\rangle|^p\,dS_M(v)\leq n\omega_n\big(a_{n,p}+(a_{n,p}+p-1)(e^{(n-1)\delta}-1)\big).
\]
Combining this with $e^{-(p-1)\delta}\leq h_M^{1-p}\leq1$ and \eqref{eq:GammaPi}, and writing $\beta=(p-1)/a_{n,p}$, we obtain for every $x\in S^{n-1}$
\[
e^{-(p-1)\delta}\big(1-\beta\,(e^{(n-1)\delta}-1)\big)\leq h(\Pi_pM,x)^p\leq1+(1+\beta)(e^{(n-1)\delta}-1).
\]
Let $\delta_0=\delta_0(n,p)>0$ be such that $\beta\,(e^{(n-1)\delta_0}-1)\leq\frac12$. If $\delta\leq\delta_0$, then, using $1-\tau\geq e^{-2\tau}$ for $\tau\in[0,\frac12]$ and $e^{(n-1)\delta}-1\leq(n-1)e^{(n-1)\delta_0}\delta$, both bounds are of the form $e^{\pm C_1\delta}$ with $C_1=C_1(n,p)$. Hence $e^{-C_1\delta/p}B^n\subseteq\Pi_pM\subseteq e^{C_1\delta/p}B^n$, the same inclusions hold for $\Pi^*_pM$, and $\delta_{\mathrm{BM}}(\Pi^*_pM,B^n)\leq2C_1\delta/p$. If $\delta>\delta_0$, then $\delta_{\mathrm{BM}}(\Pi^*_pM,B^n)\leq\frac12\log n\leq\frac{\log n}{2\delta_0}\,\delta$.
\end{proof}

\begin{proof}[Proof of Theorem \ref{thm:BPstab}]
Let $L$ be an origin-symmetric convex body, let $M=\Gamma_pL$ and $Q=\Pi^*_pM$, both origin-symmetric convex bodies, and set
\[
\delta=\delta_{\mathrm{BM}}(L,B^n),\qquad\delta_1=\delta_{\mathrm{BM}}(M,B^n),\qquad\delta_2=\delta_{\mathrm{BM}}(L,Q),\qquad N=840(n+p).
\]
By Lemma \ref{lem:classreduction} with this $M$, $|M|=V_p(M,M)=\frac{\omega_n}{|L|}\widetilde{V}_{-p}(L,Q)$. By Lemma \ref{lem:Nguyen} and Theorem \ref{thm:Boroczky},
\[
|M|\geq\omega_n\,|L|^{\frac pn}|Q|^{-\frac pn}\Big(1+\frac{p}{8n}\eta^2\Big)\geq|L|^{\frac pn}|M|^{\frac{n-p}{n}}\big(1-\gamma_0\delta_1^N\big)^{-\frac pn}\Big(1+\frac{p}{8n}\eta^2\Big),
\]
with $\eta=|L\triangle(aQ)|/|L|$ and $a=(|L|/|Q|)^{1/n}$. Hence
\[
\frac{|M|}{|L|}\geq\big(1-\gamma_0\delta_1^N\big)^{-1}\Big(1+\frac{p}{8n}\eta^2\Big)^{\frac np}\geq\big(1+\gamma_0\delta_1^N\big)\big(1+c_2\eta^2\big)\geq1+\gamma_0\delta_1^N+c_2\eta^2,
\]
where $c_2=c_2(n,p)>0$; here we used $\eta\leq2$ and $(1+\tau)^{n/p}\geq1+\frac np\frac{\tau}{1+\tau}$ for $\tau\geq0$. Since $|aQ|=|L|$ and $\delta_{\mathrm{BM}}(L,aQ)=\delta_2$, Lemma \ref{lem:symdiff} gives $\eta\geq c_1\delta_2^n$. By Lemma \ref{lem:Lipschitz} and the triangle inequality, $\delta\leq\delta_2+\delta_{\mathrm{BM}}(Q,B^n)\leq\delta_2+C\delta_1$. Since $\delta_2\leq\log n$ and $N\geq2n$, we have $\delta_2^{2n}\geq(\log n)^{2n-N}\delta_2^N$, and therefore
\[
\frac{|\Gamma_pL|}{|L|}-1\geq\gamma_0\delta_1^N+c_2c_1^2\delta_2^{2n}\geq c_3\big(\delta_1^N+\delta_2^N\big)\geq c_3\,2^{1-N}(\delta_1+\delta_2)^N\geq\gamma\,\delta^N,
\]
with $c_3$ and $\gamma$ depending only on $n$ and $p$.
\end{proof}

\section{Stability of the affine inequalities}\label{sec:stability}

We now combine Theorem \ref{thm:BPstab} with the anisotropic structure of $\mathcal{E}_p$ to prove Theorem \ref{thm:stab}.  We first collect the bounds in terms of the polar projection body of an extremal.

\begin{proposition}\label{prop:bodystab}
Let $\gamma=\gamma(n,p)$ be the constant of Theorem \ref{thm:BPstab}, let $N=840(n+p)$, and let $\theta$ and $\kappa$ be as in \eqref{eq:Wulff}.
\begin{enumerate}[label=\textup{(\roman*)}]
\item If $p>1$, $1\leq q<p^*$ and $\Omega\subset\mathbb{R}^n$ is a bounded open set, then, for every minimizer $u$ for $\Lambda^{\mathcal{A}}_{p,q}(\Omega)$,
\[
\Lambda^{\mathcal{A}}_{p,q}(\Omega)\geq\big(1+\gamma\,\delta_{\mathrm{BM}}(\Pi^*_pu,B^n)^N\big)^{\frac pn}\Lambda_{p,q}(\Omega^\star).
\]
\item If $1<p<n$ and $K\subset\mathbb{R}^n$ is a compact set with $|K|>0$, then, for every minimizer $u$ as in Theorem \ref{thm:potential},
\[
\operatorname{cap}^{\mathcal{A}}_p(K)\geq\big(1+\gamma\,\delta_{\mathrm{BM}}(\Pi^*_pu,B^n)^N\big)^{\frac pn}\operatorname{cap}_p(K^\star).
\]
\item If $p>1$, $1<q<p^*$ and $\Omega\subset\mathbb{R}^n$ is a bounded open set, then, for every minimizer $u$ for $\Lambda^{\mathcal{A}}_{p,q}(\Omega)$,
\[
T^{\mathcal{A}}_p(\Omega)^{(p-1)\sigma}\,\Lambda^{\mathcal{A}}_{p,q}(\Omega)\geq\big(1+\gamma\,\delta_{\mathrm{BM}}(\Pi^*_pu,B^n)^N\big)^{\theta}\,\kappa.
\]
\end{enumerate}
\end{proposition}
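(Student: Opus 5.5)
The plan is to rerun the chains \eqref{eq:isochain}, \eqref{eq:capchain} and \eqref{eq:KJchain} for the minimizer $u$, and to replace the plain Busemann--Petty bound $|\Gamma_pL_u|/|L_u|\geq1$ by Theorem \ref{thm:BPstab} applied to $L=L_u$. In all three parts the chain has already been written with the factor $|\Gamma_pL_u|/|L_u|$ kept explicit. For a minimizer $u$ of $\Lambda^{\mathcal{A}}_{p,q}(\Omega)$ with $\|u\|_q=1$, \eqref{eq:isochain} gives
\[
\Lambda^{\mathcal{A}}_{p,q}(\Omega)=\mathcal{E}_p(u)^p\geq\Big(\frac{|\Gamma_pL_u|}{|L_u|}\Big)^{\frac pn}\Lambda_{p,q}(\Omega^\star).
\]
In the same way, \eqref{eq:capchain} with the capacitary minimizer of Theorem \ref{thm:potential} (so $\mathcal{E}_p(u)^p=\operatorname{cap}^{\mathcal{A}}_p(K)$) gives the analogous bound for the capacity. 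Finally, \eqref{eq:KJchain} gives $T^{\mathcal{A}}_p(\Omega)^{(p-1)\sigma}\Lambda^{\mathcal{A}}_{p,q}(\Omega)\geq(|\Gamma_pL_u|/|L_u|)^\theta\kappa$. Normalize $\|u\|_q=1$ where needed; for (i) this is harmless since the chain is homogeneous.

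It remains to check that Theorem \ref{thm:BPstab} applies to $L_u$ and to identify the distance. By \eqref{eq:Lf}, $L_u$ is a dilate of $\Pi^*_pu$. The latter is an origin-symmetric convex body: it is the unit ball of the norm $\xi\mapsto\|\nabla_\xi u\|_p$, which is a genuine norm for $u\neq0$ in $W^{1,p}_0(\Omega)$ or $D^{1,p}(\mathbb{R}^n)$, as recalled before \eqref{eq:Lf}. Hence $L_u$ is an origin-symmetric convex body, and Theorem \ref{thm:BPstab} yields
\[
\frac{|\Gamma_pL_u|}{|L_u|}\geq1+\gamma\,\delta_{\mathrm{BM}}(L_u,B^n)^N.
\]
Since $\delta_{\mathrm{BM}}$ is invariant under linear images, and in particular dilations, $\delta_{\mathrm{BM}}(L_u,B^n)=\delta_{\mathrm{BM}}(\Pi^*_pu,B^n)$. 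Raising to the positive powers $p/n$ and $\theta$ respectively, which preserves the inequality, gives (i), (ii) and (iii).

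There is no real obstacle. The only points to be careful about are these:
\begin{itemize}
\item In (iii) the chain \eqref{eq:KJchain} was derived for a minimizer normalized by $\|u\|_q=1$, so we must make sure the same $u$ is used.
\item $\theta>0$ requires $q>1$, which is assumed in (iii).
\item In (ii) the minimizer must be nonzero, which holds since $\tilde u=1$ q.e.\ on $K$ with $|K|>0$.
\end{itemize}
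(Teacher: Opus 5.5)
Your proposal is correct and matches the paper's proof: apply Theorem \ref{thm:BPstab} to $L_u$, which is a dilate of the origin-symmetric convex body $\Pi^*_pu$ and so has the same Banach--Mazur distance to $B^n$, and feed the resulting lower bound on $|\Gamma_pL_u|/|L_u|$ into \eqref{eq:isochain}, \eqref{eq:capchain} and \eqref{eq:KJchain} with $\|u\|_q=1$. In (iii), the normalization is harmless because $L_{tu}=L_u$ for $t>0$.
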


\begin{proof}
In each case $L_u$ is a dilate of the origin-symmetric convex body $\Pi^*_pu$, and the Banach--Mazur distance is invariant under dilations, so that Theorem \ref{thm:BPstab} gives
\[
\frac{|\Gamma_pL_u|}{|L_u|}\geq1+\gamma\,\delta_{\mathrm{BM}}(\Pi^*_pu,B^n)^N,
\]
and it remains to bound the three quantities from below by the corresponding powers of this ratio times the Euclidean values. For (i), by homogeneity we may assume $\|u\|_q=1$; then $\Lambda^{\mathcal{A}}_{p,q}(\Omega)=\mathcal{E}_p(u)^p$, and \eqref{eq:isochain} with $v=u$ gives the power $p/n$. For (ii), $\operatorname{cap}^{\mathcal{A}}_p(K)=\mathcal{E}_p(u)^p$, and \eqref{eq:capchain} gives the power $p/n$. For (iii), by homogeneity we may assume $\|u\|_q=1$, and then \eqref{eq:KJchain} gives the power $\theta$.
\end{proof}

For $p=2$ these bounds reduce to \eqref{eq:iso}, \eqref{eq:affisocap} and \eqref{eq:KJ}, since $\Pi^*_2u$ is always an ellipsoid (Proposition \ref{prop:p2}).

\begin{proof}[Proof of Theorem \ref{thm:stab}(i) and (ii)]
We begin with (ii). Let $\varepsilon\geq0$ be defined by $\operatorname{cap}^{\mathcal{A}}_p(K)=(1+\varepsilon)\operatorname{cap}_p(K^\star)$. Since $\mathcal{F}_{\mathrm{aff}}(K)\leq2$, we may assume that $\varepsilon\leq1$. For every $A\in\mathrm{SL}(n)$ we have $|AK|=|K|$ and $\mathcal{F}(AK)\geq\mathcal{F}_{\mathrm{aff}}(K)$.

If $p=2$, Theorem \ref{thm:DPMM} gives $\operatorname{cap}_2(AK)\geq\big(1+c_{\mathrm{cap}}\mathcal{F}_{\mathrm{aff}}(K)^2\big)\operatorname{cap}_2(K^\star)$ for every $A\in\mathrm{SL}(n)$, and taking the infimum over $A$, Proposition \ref{prop:compare} proves the claim with $c=c_{\mathrm{cap}}$.

Let $p\neq2$, let $u$ be a minimizer given by Theorem \ref{thm:potential}, and set $\delta=\delta_{\mathrm{BM}}(\Pi^*_pu,B^n)$, $\eta=\delta_{\mathrm{BM}}(\Gamma_pL_u,B^n)$ and $N=840(n+p)$. By Proposition \ref{prop:bodystab}(ii), $(1+\gamma\delta^N)^{p/n}\leq1+\varepsilon$, and since $(1+\varepsilon)^{n/p}-1\leq(2^{n/p}-1)\varepsilon$ for $\varepsilon\in[0,1]$, we get $\delta\leq C_1\varepsilon^{1/N}$. If $E'$ is an origin-symmetric ellipsoid with $E'\subseteq L_u\subseteq e^{\delta}E'$, then \eqref{eq:GammaPi} gives $e^{-\delta n/p}\,\Gamma_pE'\subseteq\Gamma_pL_u\subseteq e^{\delta(n+p)/p}\,\Gamma_pE'$, and $\Gamma_pE'$ is an ellipsoid; hence $\eta\leq\frac{2n+p}{p}\delta\leq C_2\varepsilon^{1/N}$. Let $E=tA_0B^n$, with $t>0$ and $A_0\in\mathrm{SL}(n)$, be an origin-symmetric ellipsoid with $E\subseteq\Gamma_pL_u\subseteq e^{\eta}E$. Then $t^n=|E|/\omega_n\geq e^{-n\eta}|\Gamma_pL_u|/\omega_n\geq e^{-n\eta}$ by the $L_p$ Busemann--Petty centroid inequality. As $H_u=h(\Gamma_pL_u,\cdot)\geq h_E$, Lemma \ref{lem:Wulff}(iii) and Theorem \ref{thm:DPMM} give
\begin{align*}
(1+\varepsilon)\operatorname{cap}_p(K^\star)&=\int_{\mathbb{R}^n}H_u(\nabla u)^p\,dx\geq\operatorname{cap}^{E}_p(K)=t^p\operatorname{cap}_p(A_0^{-1}K)\\
&\geq e^{-p\eta}\big(1+c_{\mathrm{cap}}\,\mathcal{F}(A_0^{-1}K)^2\big)\operatorname{cap}_p(K^\star).
\end{align*}
Consequently $c_{\mathrm{cap}}\,\mathcal{F}_{\mathrm{aff}}(K)^2\leq c_{\mathrm{cap}}\,\mathcal{F}(A_0^{-1}K)^2\leq(1+\varepsilon)e^{pC_2\varepsilon^{1/N}}-1\leq C_3\varepsilon^{1/N}$, which is the claim with $\beta=2N$. The constants $C_1,C_2,C_3$ depend only on $n$ and $p$.

We turn to (i). Let $\varepsilon\geq0$ be defined by $\Lambda^{\mathcal{A}}_{p,q}(\Omega)=(1+\varepsilon)\Lambda_{p,q}(\Omega^\star)$. If $p=2$, Proposition \ref{prop:p2} gives $A\in\mathrm{SL}(n)$ with $\Lambda^{\mathcal{A}}_{2,q}(\Omega)=\Lambda_{2,q}(A\Omega)$; since $|A\Omega|=|\Omega|$ and $\mathcal{F}(A\Omega)\geq\mathcal{F}_{\mathrm{aff}}(\Omega)$, \eqref{eq:quantFK} gives the claim with $\beta=2$. If $p\neq2$, the argument above applies verbatim, with $\Lambda_{p,q}$ in place of $\operatorname{cap}_p$, a minimizer $u$ for $\Lambda^{\mathcal{A}}_{p,q}(\Omega)$ with $\|u\|_q=1$ in place of the potential, Proposition \ref{prop:bodystab}(i) in place of Proposition \ref{prop:bodystab}(ii), the identity $\Lambda^{E}_{p,q}(\Omega)=t^p\Lambda_{p,q}(A_0^{-1}\Omega)$ given by Lemma \ref{lem:Wulff}(iii) in place of its capacitary form, and \eqref{eq:quantFK} in place of Theorem \ref{thm:DPMM}. The case $q=1$ gives the bound for $T^{\mathcal{A}}_p$.
\end{proof}

For the proof of Theorem \ref{thm:stab}(iii) we now compare \eqref{eq:KJ} with the Euclidean inequality \eqref{eq:KJEuclid} by means of Lemma \ref{lem:Wulff}(iii).

\begin{proposition}\label{prop:reduction}
Let $p>1$, $1<q<p^*$, let $\Omega\subset\mathbb{R}^n$ be a bounded open set and let $u$ be a minimizer for $\Lambda^{\mathcal{A}}_{p,q}(\Omega)$. Define $\varepsilon\geq0$ by $T^{\mathcal{A}}_p(\Omega)^{(p-1)\sigma}\Lambda^{\mathcal{A}}_{p,q}(\Omega)=(1+\varepsilon)\,\kappa$, where $\kappa=T_p(B^n)^{(p-1)\sigma}\Lambda_{p,q}(B^n)$, and let $\eta=\delta_{\mathrm{BM}}(\Gamma_pL_u,B^n)$. Then there exists $A\in\mathrm{GL}(n)$ such that
\begin{equation}\label{eq:reduction}
T_p(A^{-1}\Omega)^{(p-1)\sigma}\,\Lambda_{p,q}(A^{-1}\Omega)\leq(1+\varepsilon)\,e^{(p\sigma+n\theta)\eta}\,\kappa,
\end{equation}
where $\theta$ is as in \eqref{eq:Wulff}, and $\eta\leq\frac{2n+p}{p}\,\delta_{\mathrm{BM}}(\Pi^*_pu,B^n)$. In particular, if $\varepsilon\leq1$, the left-hand side of \eqref{eq:reduction} is at most $\big(1+C\varepsilon^{1/(840(n+p))}\big)\kappa$, with $C=C(n,p,q)$. If $p=2$, then $\eta=0$ and $A\in\mathrm{SL}(n)$.
\end{proposition}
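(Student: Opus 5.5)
The plan is to reuse the chain \eqref{eq:KJchain} and then to replace the anisotropic norm $h_Q$, $Q=\Gamma_pL_u$, by an ellipsoidal norm. By homogeneity we take $\|u\|_q=1$. Let $E=A B^n$ with $A\in\mathrm{GL}(n)$, $\det A>0$, be an origin-symmetric ellipsoid such that $E\subseteq Q\subseteq e^{\eta}E$; such an $E$ exists by the definition of $\eta$. Then $h_E\leq h_Q\leq e^{\eta}h_E$ pointwise. As in the proof of Theorem \ref{thm:KJ}, we have $\Lambda^{\mathcal{A}}_{p,q}(\Omega)=\int_\Omega h_Q(\nabla u)^p\geq\Lambda^{Q}_{p,q}(\Omega)\geq\Lambda^{E}_{p,q}(\Omega)$. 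For the torsion the comparison goes the other way: $T^{\mathcal{A}}_p(\Omega)\geq T^{Q}_p(\Omega)\geq e^{-p\eta/(p-1)}T^{E}_p(\Omega)$, because $\Lambda^{Q}_{p,1}\leq e^{p\eta}\Lambda^{E}_{p,1}$. Combining these gives
\[
(1+\varepsilon)\kappa\geq e^{-p\sigma\eta}\,T^{E}_p(\Omega)^{(p-1)\sigma}\Lambda^{E}_{p,q}(\Omega).
\]

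Next, Lemma \ref{lem:Wulff}(iii) turns the right-hand side into Euclidean quantities. It gives
\[
T^{E}_p(\Omega)^{(p-1)\sigma}\Lambda^{E}_{p,q}(\Omega)=(\det A)^{(p-1)\sigma+1-p/q}\,T_p(A^{-1}\Omega)^{(p-1)\sigma}\Lambda_{p,q}(A^{-1}\Omega),
\]
and the exponent is $\theta$ by the identity used in Lemma \ref{lem:Wulff}(i). It remains to bound $\det A=|E|/\omega_n$ from below. Since $Q\subseteq e^\eta E$, the $L_p$ Busemann--Petty inequality gives $|E|\geq e^{-n\eta}|Q|\geq e^{-n\eta}|L_u|=e^{-n\eta}\omega_n$. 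Hence $(\det A)^{-\theta}\leq e^{n\theta\eta}$ because $\theta>0$, and \eqref{eq:reduction} follows.

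The bound $\eta\leq\frac{2n+p}{p}\delta_{\mathrm{BM}}(\Pi^*_pu,B^n)$ is the same computation as in the proof of Theorem \ref{thm:stab}(ii). Take $E'\subseteq L_u\subseteq e^{\delta}E'$. From \eqref{eq:GammaPi}, the integral $\int_L|\langle x,y\rangle|^p$ scales by at most $e^{\delta(n+p)}$ and $|L|$ by at least $e^{-n\delta}$, so $e^{-\delta n/p}\Gamma_pE'\subseteq\Gamma_pL_u\subseteq e^{\delta(n+p)/p}\Gamma_pE'$. Since $\Gamma_pE'$ is an ellipsoid, this gives the stated bound.

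For the "in particular" part, Proposition \ref{prop:bodystab}(iii) gives $(1+\gamma\delta^N)^{\theta}\leq1+\varepsilon$. For $\varepsilon\leq1$ this yields $\delta\leq C\varepsilon^{1/N}$, so $\eta\leq C\varepsilon^{1/N}$, where $\delta\leq\frac12\log n$ keeps the exponential bounded. Then $(1+\varepsilon)e^{C\varepsilon^{1/N}}\leq1+C'\varepsilon^{1/N}$.

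For $p=2$, Proposition \ref{prop:p2} shows that $\Pi^*_2u$, hence $L_u$, is an ellipsoid of volume $\omega_n$. Then $Q=\Gamma_2L_u=L_u$, so $\eta=0$, we may take $E=Q$, and $\det A=1$.

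The main obstacle is keeping the signs of the exponents straight, since $T$ and $\Lambda$ move in opposite directions under the norm comparison. The volume lower bound on $E$ needs the Busemann--Petty inequality, not merely the inclusion. Everything else is the bookkeeping of Lemma \ref{lem:Wulff}.
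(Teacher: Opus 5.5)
Your proposal is correct and follows the paper's proof essentially step for step. You take the ellipsoid $E=AB^n$ with $E\subseteq\Gamma_pL_u\subseteq e^{\eta}E$, use the opposite-direction comparisons for $\Lambda^{E}_{p,q}$ and $T^{E}_p$, and convert to Euclidean quantities via Lemma \ref{lem:Wulff}(iii) with exponent $\theta$. Then come the bound $\det A\geq e^{-n\eta}$ from the Busemann--Petty inequality, the $\Gamma_pE'$ sandwich for $\eta$, Proposition \ref{prop:bodystab}(iii) for the rate, and $E=Q$ when $p=2$.
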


\begin{proof}
As in the proof of Proposition \ref{prop:bodystab}(iii), we may assume $u\geq0$ and $\|u\|_q=1$. Let $Q=\Gamma_pL_u$, and let $E=AB^n$, with $\det A>0$, be an origin-symmetric ellipsoid such that $E\subseteq Q\subseteq e^{\eta}E$. Since $h_E\leq h_Q\leq e^{\eta}h_E$, we have $\Lambda^{Q}_{p,q}(\Omega)\geq\Lambda^{E}_{p,q}(\Omega)$ and $\Lambda^{Q}_{p,1}(\Omega)\leq e^{p\eta}\Lambda^{E}_{p,1}(\Omega)$, that is, $T^{Q}_p(\Omega)\geq e^{-p\eta/(p-1)}T^{E}_p(\Omega)$. Combining the first inequality in \eqref{eq:KJchain} with Lemma \ref{lem:Wulff}(iii) and the identity $(p-1)\sigma+1-\frac pq=\theta$, we obtain
\[
(1+\varepsilon)\,\kappa\geq T^{Q}_p(\Omega)^{(p-1)\sigma}\,\Lambda^{Q}_{p,q}(\Omega)\geq e^{-p\sigma\eta}\,(\det A)^{\theta}\,T_p(A^{-1}\Omega)^{(p-1)\sigma}\,\Lambda_{p,q}(A^{-1}\Omega).
\]
Since $Q\subseteq e^{\eta}E$ and $|Q|\geq|L_u|=\omega_n$ by \eqref{eq:BP}, we have $\det A=|E|/\omega_n\geq e^{-n\eta}$, and \eqref{eq:reduction} follows. If $E'\subseteq L_u\subseteq e^{s}E'$ for an origin-symmetric ellipsoid $E'$, then \eqref{eq:GammaPi} gives $e^{-sn/p}\,\Gamma_pE'\subseteq\Gamma_pL_u\subseteq e^{s(n+p)/p}\,\Gamma_pE'$, and $\Gamma_pE'$ is an ellipsoid; hence $\eta\leq\frac{2n+p}{p}\,\delta_{\mathrm{BM}}(L_u,B^n)=\frac{2n+p}{p}\,\delta_{\mathrm{BM}}(\Pi^*_pu,B^n)$. For $\varepsilon\leq1$, Proposition \ref{prop:bodystab}(iii) gives $\delta_{\mathrm{BM}}(\Pi^*_pu,B^n)\leq C\varepsilon^{1/(840(n+p))}$, and the bound on the left-hand side of \eqref{eq:reduction} follows. Finally, for $p=2$ the body $\Pi^*_2u$ is an ellipsoid, so $L_u$ is an origin-symmetric ellipsoid of volume $\omega_n$ and $Q=\Gamma_2L_u=L_u$; we may take $E=Q$, so that $\eta=0$ and $\det A=1$.
\end{proof}

Since $A^{-1}\Omega$ is close to a ball precisely when $\Omega$ is close to the image of that ball under $A$, Proposition \ref{prop:reduction} transfers every quantitative form of \eqref{eq:KJEuclid} on a class of sets invariant under linear maps to \eqref{eq:KJ}, as stated in the introduction. The class cannot be that of all open sets.

\begin{remark}\label{rem:dust}
Let $V>0$, let $W$ be a cube with $|W|=2^{n+1}V/\omega_n$ at distance $R$ from $B^n$, and for small $r>0$ let $D_r\subseteq W$ be the union of $N_r=\lfloor V/(\omega_nr^n)\rfloor$ pairwise disjoint balls of radius $r$, for instance centered at points of $2r\mathbb{Z}^n$. Let $\Omega_r=B^n\cup D_r$. For every origin-symmetric convex body $Q$, the functional $T^{Q}_p$ is subadditive on disjoint unions of open sets, as follows from H\"older's inequality applied to $v=v_1+v_2$ with $v_i\in W^{1,p}_0(\Omega_i)$. By \eqref{eq:envelope}, $T^{\mathcal{A}}_p(\Omega)$ is the supremum of $T^{\Gamma_pL}_p(\Omega)$ over star bodies $L$ with $|L|=\omega_n$, and $T^{\Gamma_pL}_p(B_r)\leq T^{\mathcal{A}}_p(B_r)=T_p(B_r)=r^{n+p'}\,T_p(B^n)$ by Corollary \ref{cor:SV}. Hence
\[
T_p(B^n)\leq T^{\mathcal{A}}_p(\Omega_r)\leq T_p(B^n)+N_r\,T_p(B_r)\leq\Big(1+\frac{V}{\omega_n}\,r^{p'}\Big)\,T_p(B^n),
\]
while $\Lambda^{\mathcal{A}}_{p,q}(\Omega_r)\leq\Lambda^{\mathcal{A}}_{p,q}(B^n)=\Lambda_{p,q}(B^n)$. Therefore the deficit in \eqref{eq:KJ} for $\Omega_r$ tends to $0$ as $r\to0$, and the same argument applies to \eqref{eq:KJEuclid}. On the other hand, an ellipsoid $E$ with $|E\triangle\Omega_r|<\frac14\min\{\omega_n,V\}$ contains subsets of $B^n$ and of $D_r$ of volume at least $\frac14\min\{\omega_n,V\}$ for $r$ small, and hence their convex hull, whose volume tends to infinity with $R$ by the Brunn--Minkowski inequality. Choosing $R$ large, depending only on $n$ and $V$, we obtain $|E\triangle\Omega_r|\geq\frac14\min\{\omega_n,V\}$ for every ellipsoid $E$ and every small $r$. Thus, since $|\Omega_r|\leq\omega_n+V$, we have $\mathcal{F}_{\mathrm{aff}}(\Omega_r)\geq\min\{\omega_n,V\}/\big(4(\omega_n+V)\big)$ for all small $r$, while the deficit in \eqref{eq:KJ} tends to $0$: for general open sets, the deficit does not control the affine Fraenkel asymmetry.
\end{remark}

No such example exists for the affine capacity, in accordance with Theorem \ref{thm:stab}: a ball of radius $r$ has torsional rigidity proportional to $r^{n+p'}$ but capacity proportional to $r^{n-p}$, so that many small balls of fixed total volume contribute little to the torsional rigidity and much to the capacity.

\begin{proof}[Proof of Theorem \ref{thm:stab}(iii)]
Let $\kappa$ be as in \eqref{eq:Wulff} and write $P(\Omega)=T^{\mathcal{A}}_p(\Omega)^{(p-1)\sigma}\Lambda^{\mathcal{A}}_{p,q}(\Omega)$. By \eqref{eq:invariance}, $P$ is invariant under affine maps with positive determinant, and so is the conclusion of (iii). Moreover, $T^{\mathcal{A}}_p$ is nondecreasing and $\Lambda^{\mathcal{A}}_{p,q}$ is nonincreasing with respect to inclusion. Hence, if $(1-\tau)\Omega'\subseteq\Omega\subseteq(1+\tau)\Omega'$ for an open set $\Omega'$ and $\tau\in(0,1)$, then, since $(p-1)\sigma(n+p')=p+\frac{np}{q}-n>0$,
\begin{equation}\label{eq:sandwich}
P(\Omega)\geq T^{\mathcal{A}}_p\big((1-\tau)\Omega'\big)^{(p-1)\sigma}\,\Lambda^{\mathcal{A}}_{p,q}\big((1+\tau)\Omega'\big)=\Big(\frac{1-\tau}{1+\tau}\Big)^{p+\frac{np}{q}-n}P(\Omega').
\end{equation}

Suppose that (iii) fails for some $\eta>0$. Then there are bounded open convex sets $\Omega_k$ with $P(\Omega_k)\leq(1+\frac1k)\kappa$ for which no ellipsoid has the stated property. By John's theorem \cite{John1948} and the affine invariance, we may assume $B^n\subseteq\overline{\Omega_k}\subseteq nB^n$. By the Blaschke selection theorem, a subsequence of $(\overline{\Omega_k})$ converges in the Hausdorff metric to a convex body $K_\infty$ with $B^n\subseteq K_\infty\subseteq nB^n$. Since all these bodies contain $B^n$, their radial functions converge uniformly, so that $(1-\tau_k)\Omega_\infty\subseteq\Omega_k\subseteq(1+\tau_k)\Omega_\infty$, where $\Omega_\infty=\operatorname{int}K_\infty$ and $\tau_k\to0$. By \eqref{eq:sandwich}, $P(\Omega_\infty)\leq\kappa$, so equality holds in \eqref{eq:KJ} for $\Omega_\infty$, and by Theorem \ref{thm:KJ} there is an open ellipsoid $E$ with $\Omega_\infty\subseteq E$ and $C_p(E\setminus\Omega_\infty)=0$. Since sets of zero Sobolev $p$-capacity have measure zero, $E\setminus\Omega_\infty$ has empty interior, so $\Omega_\infty$ is dense in $E$. Hence, $K_\infty\supseteq E$ and $\Omega_\infty=\operatorname{int}K_\infty\supseteq E$; hence $\Omega_\infty=E$. Let $z$ be the center of $E$. Since $B^n\subseteq E\subseteq nB^n$ and $E$ is symmetric about $z$, we have $B^n\subseteq E-z$ and $|z|\leq n$. Therefore the ellipsoid $E_k=(1-\tau_k)E$, whose center is $z_k=(1-\tau_k)z$, satisfies
\[
E_k\subseteq\Omega_k\subseteq(1+\tau_k)E\subseteq z_k+\frac{1+(2n+1)\tau_k}{1-\tau_k}\,(E_k-z_k),
\]
which contradicts the choice of $\Omega_k$ for $k$ large.

Finally, let $E$ and $z$ be as in the conclusion of (iii). Since $|E|\leq|\Omega|\leq e^{n\eta}|E|$, the ellipsoid $E_0=z+s(E-z)$ with $s=(|\Omega|/|E|)^{1/n}\in[1,e^{\eta}]$ satisfies $|E_0|=|\Omega|$ and $E\subseteq E_0\subseteq z+e^{\eta}(E-z)$. As $\Omega$ lies between the same two sets, $|\Omega\triangle E_0|\leq\big|\big(z+e^{\eta}(E-z)\big)\setminus E\big|=(e^{n\eta}-1)|E|\leq(e^{n\eta}-1)|\Omega|$, and therefore $\mathcal{F}_{\mathrm{aff}}(\Omega)\leq e^{n\eta}-1$.
\end{proof}

\textbf{AI-assisted tools.} The author used a large language model as an assistant during the preparation of this manuscript, mainly for checking computations and constants, checking the literature and the bibliography, and polishing the manuscript. The author takes full responsibility for the content of the paper.

\medskip
\noindent\textbf{Conflicts of interest.} There are no conflicts of interest.

\bibliographystyle{acm}
\bibliography{affine-merged}

@book {AdamsHedberg1996,
    AUTHOR = {Adams, David R. and Hedberg, Lars Inge},
     TITLE = {Function spaces and potential theory},
    SERIES = {Grundlehren der mathematischen Wissenschaften},
    VOLUME = {314},
 PUBLISHER = {Springer-Verlag, Berlin},
      YEAR = {1996},
}

@article {AFLT1997,
    AUTHOR = {Alvino, Angelo and Ferone, Vincenzo and Lions, Pierre-Louis and Trombetti, Guido},
     TITLE = {Convex symmetrization and applications},
   JOURNAL = {Ann. Inst. H. Poincar\'{e} Anal. Non Lin\'{e}aire},
  FJOURNAL = {Annales de l'Institut Henri Poincar\'{e}. Analyse Non Lin\'{e}aire},
    VOLUME = {14},
      YEAR = {1997},
    NUMBER = {2},
     PAGES = {275--293},
  MRNUMBER = {1441395},
}

@article {Aubin1976,
    AUTHOR = {Aubin, Thierry},
     TITLE = {Probl\`emes isop\'{e}rim\'{e}triques et espaces de {S}obolev},
   JOURNAL = {J. Differential Geometry},
  FJOURNAL = {Journal of Differential Geometry},
    VOLUME = {11},
      YEAR = {1976},
    NUMBER = {4},
     PAGES = {573--598},
}

@article {Boroczky2013,
    AUTHOR = {B\"{o}r\"{o}czky, K\'{a}roly J.},
     TITLE = {Stronger versions of the {O}rlicz-{P}etty projection inequality},
   JOURNAL = {J. Differential Geom.},
  FJOURNAL = {Journal of Differential Geometry},
    VOLUME = {95},
      YEAR = {2013},
    NUMBER = {2},
     PAGES = {215--247},
       DOI = {10.4310/jdg/1376053446},
}

@article {Brasco2014,
    AUTHOR = {Brasco, Lorenzo},
     TITLE = {On torsional rigidity and principal frequencies: an invitation to the {K}ohler-{J}obin rearrangement technique},
   JOURNAL = {ESAIM Control Optim. Calc. Var.},
  FJOURNAL = {ESAIM. Control, Optimisation and Calculus of Variations},
    VOLUME = {20},
      YEAR = {2014},
    NUMBER = {2},
     PAGES = {315--338},
  MRNUMBER = {3264206},
       DOI = {10.1051/cocv/2013065},
}

@article {BrothersZiemer1988,
    AUTHOR = {Brothers, John E. and Ziemer, William P.},
     TITLE = {Minimal rearrangements of {S}obolev functions},
   JOURNAL = {J. Reine Angew. Math.},
  FJOURNAL = {Journal f\"{u}r die Reine und Angewandte Mathematik},
    VOLUME = {384},
      YEAR = {1988},
     PAGES = {153--179},
}

@article {CG2002,
    AUTHOR = {Campi, Stefano and Gronchi, Paolo},
     TITLE = {The {$L^p$}-{B}usemann-{P}etty centroid inequality},
   JOURNAL = {Adv. Math.},
  FJOURNAL = {Advances in Mathematics},
    VOLUME = {167},
      YEAR = {2002},
    NUMBER = {1},
     PAGES = {128--141},
}

@article {CLYZ2009,
    AUTHOR = {Cianchi, Andrea and Lutwak, Erwin and Yang, Deane and Zhang, Gaoyong},
     TITLE = {Affine {M}oser-{T}rudinger and {M}orrey-{S}obolev inequalities},
   JOURNAL = {Calc. Var. Partial Differential Equations},
  FJOURNAL = {Calculus of Variations and Partial Differential Equations},
    VOLUME = {36},
      YEAR = {2009},
    NUMBER = {3},
     PAGES = {419--436},
  MRNUMBER = {2551138},
}

@article {DPMM2021,
    AUTHOR = {De Philippis, Guido and Marini, Michele and Mukoseeva, Ekaterina},
     TITLE = {The sharp quantitative isocapacitary inequality},
   JOURNAL = {Rev. Mat. Iberoam.},
  FJOURNAL = {Revista Matem\'{a}tica Iberoamericana},
    VOLUME = {37},
      YEAR = {2021},
    NUMBER = {6},
     PAGES = {2191--2228},
       DOI = {10.4171/rmi/1259},
}

@article {FMP2009,
    AUTHOR = {Fusco, Nicola and Maggi, Francesco and Pratelli, Aldo},
     TITLE = {Stability estimates for certain {F}aber-{K}rahn, isocapacitary and {C}heeger inequalities},
   JOURNAL = {Ann. Sc. Norm. Super. Pisa Cl. Sci. (5)},
  FJOURNAL = {Annali della Scuola Normale Superiore di Pisa. Classe di Scienze. Serie V},
    VOLUME = {8},
      YEAR = {2009},
    NUMBER = {1},
     PAGES = {51--71},
}

@article {FranzinaLamberti2010,
    AUTHOR = {Franzina, Giovanni and Lamberti, Pier Domenico},
     TITLE = {Existence and uniqueness for a {$p$}-{L}aplacian nonlinear eigenvalue problem},
   JOURNAL = {Electron. J. Differential Equations},
  FJOURNAL = {Electronic Journal of Differential Equations},
      YEAR = {2010},
     PAGES = {Paper No. 26, 10},
}

@book {Gardner2006,
    AUTHOR = {Gardner, Richard J.},
     TITLE = {Geometric tomography},
   EDITION = {Second},
    SERIES = {Encyclopedia of Mathematics and its Applications},
    VOLUME = {58},
 PUBLISHER = {Cambridge University Press, New York},
      YEAR = {2006},
}

@article {HHW1991,
    AUTHOR = {Hall, R. R. and Hayman, W. K. and Weitsman, A. W.},
     TITLE = {On asymmetry and capacity},
   JOURNAL = {J. Analyse Math.},
  FJOURNAL = {Journal d'Analyse Math\'{e}matique},
    VOLUME = {56},
      YEAR = {1991},
     PAGES = {87--123},
}

@article {HJM2016,
    AUTHOR = {Haddad, Juli\'{a}n and Jim\'{e}nez, C. Hugo and Montenegro, Marcos},
     TITLE = {Sharp affine {S}obolev type inequalities via the {$L_p$} {B}usemann-{P}etty centroid inequality},
   JOURNAL = {J. Funct. Anal.},
  FJOURNAL = {Journal of Functional Analysis},
    VOLUME = {271},
      YEAR = {2016},
    NUMBER = {2},
     PAGES = {454--473},
}

@article {HJM2021,
    AUTHOR = {Haddad, Juli\'{a}n and Jim\'{e}nez, C. Hugo and Montenegro, Marcos},
     TITLE = {From affine {P}oincar\'{e} inequalities to affine spectral inequalities},
   JOURNAL = {Adv. Math.},
  FJOURNAL = {Advances in Mathematics},
    VOLUME = {386},
      YEAR = {2021},
     PAGES = {Paper No. 107808, 35},
}

@book {HKM2006,
    AUTHOR = {Heinonen, Juha and Kilpel\"{a}inen, Tero and Martio, Olli},
     TITLE = {Nonlinear potential theory of degenerate elliptic equations},
 PUBLISHER = {Dover Publications, Inc., Mineola, NY},
      YEAR = {2006},
      NOTE = {Unabridged republication of the 1993 original},
}

@article {HSX2012,
    AUTHOR = {Haberl, Christoph and Schuster, Franz E. and Xiao, Jie},
     TITLE = {An asymmetric affine {P}\'{o}lya-{S}zeg\"{o} principle},
   JOURNAL = {Math. Ann.},
  FJOURNAL = {Mathematische Annalen},
    VOLUME = {352},
      YEAR = {2012},
    NUMBER = {3},
     PAGES = {517--542},
  MRNUMBER = {2885586},
}

@incollection {John1948,
    AUTHOR = {John, Fritz},
     TITLE = {Extremum problems with inequalities as subsidiary conditions},
 BOOKTITLE = {Studies and {E}ssays {P}resented to {R}. {C}ourant on his 60th {B}irthday, {J}anuary 8, 1948},
     PAGES = {187--204},
 PUBLISHER = {Interscience Publishers, Inc., New York},
      YEAR = {1948},
}

@article {KohlerJobin1975,
    AUTHOR = {Kohler-Jobin, Marie-Th\'{e}r\`ese},
     TITLE = {D\'{e}monstration de l'in\'{e}galit\'{e} isop\'{e}rim\'{e}trique {$P\lambda^2\geq\pi j_0^4/2$}, conjectur\'{e}e par {P}\'{o}lya et {S}zeg{\H{o}}},
   JOURNAL = {C. R. Acad. Sci. Paris S\'{e}r. A-B},
    VOLUME = {281},
      YEAR = {1975},
    NUMBER = {2-3},
     PAGES = {A119--A121},
  MRNUMBER = {385691},
}

@article {KohlerJobin1978,
    AUTHOR = {Kohler-Jobin, Marie-Th\'{e}r\`ese},
     TITLE = {Une m\'{e}thode de comparaison isop\'{e}rim\'{e}trique de fonctionnelles de domaines de la physique math\'{e}matique. {I}. {U}ne d\'{e}monstration de la conjecture isop\'{e}rim\'{e}trique {$P\lambda^2\geq\pi j_0^4/2$} de {P}\'{o}lya et {S}zeg{\H{o}}},
   JOURNAL = {Z. Angew. Math. Phys.},
  FJOURNAL = {Zeitschrift f\"{u}r Angewandte Mathematik und Physik},
    VOLUME = {29},
      YEAR = {1978},
    NUMBER = {5},
     PAGES = {757--766},
  MRNUMBER = {511908},
}

@article {KohlerJobin1982,
    AUTHOR = {Kohler-Jobin, Marie-Th\'{e}r\`ese},
     TITLE = {Symmetrization with equal {D}irichlet integrals},
   JOURNAL = {SIAM J. Math. Anal.},
  FJOURNAL = {SIAM Journal on Mathematical Analysis},
    VOLUME = {13},
      YEAR = {1982},
    NUMBER = {1},
     PAGES = {153--161},
  MRNUMBER = {641547},
}

@article {LM2022,
    AUTHOR = {Leite, Edir J. F. and Montenegro, Marcos},
     TITLE = {Minimization to the {Z}hang's energy on {$BV(\Omega)$} and sharp affine {P}oincar\'{e}-{S}obolev inequalities},
   JOURNAL = {J. Funct. Anal.},
  FJOURNAL = {Journal of Functional Analysis},
    VOLUME = {283},
      YEAR = {2022},
    NUMBER = {10},
     PAGES = {Paper No. 109646, 24},
}

@article {LM2024ACV,
    AUTHOR = {Leite, Edir J. F. and Montenegro, Marcos},
     TITLE = {Least energy solutions for affine {$p$}-{L}aplace equations involving subcritical and critical nonlinearities},
   JOURNAL = {Adv. Calc. Var.},
  FJOURNAL = {Advances in Calculus of Variations},
    VOLUME = {17},
      YEAR = {2024},
    NUMBER = {3},
     PAGES = {629--641},
}

@article {LM2024CV,
    AUTHOR = {Leite, Edir J. F. and Montenegro, Marcos},
     TITLE = {Towards existence theorems to affine {$p$}-{L}aplace equations via variational approach},
   JOURNAL = {Calc. Var. Partial Differential Equations},
  FJOURNAL = {Calculus of Variations and Partial Differential Equations},
    VOLUME = {63},
      YEAR = {2024},
    NUMBER = {3},
     PAGES = {Paper No. 73, 23},
}

@misc {LR2025,
    AUTHOR = {Langharst, Dylan and Roysdon, Michael},
     TITLE = {Affine isoperimetric inequalities for the first eigenvalue of the $m$-th order affine $p$-{L}aplace operator},
      NOTE = {Preprint, arXiv:2512.17237},
      YEAR = {2025},
}

@article {Lutwak1993,
    AUTHOR = {Lutwak, Erwin},
     TITLE = {The {B}runn-{M}inkowski-{F}irey theory. {I}. {M}ixed volumes and the {M}inkowski problem},
   JOURNAL = {J. Differential Geom.},
  FJOURNAL = {Journal of Differential Geometry},
    VOLUME = {38},
      YEAR = {1993},
    NUMBER = {1},
     PAGES = {131--150},
  MRNUMBER = {1231704},
}

@article {LYZ2000,
    AUTHOR = {Lutwak, Erwin and Yang, Deane and Zhang, Gaoyong},
     TITLE = {{$L_p$} affine isoperimetric inequalities},
   JOURNAL = {J. Differential Geom.},
  FJOURNAL = {Journal of Differential Geometry},
    VOLUME = {56},
      YEAR = {2000},
    NUMBER = {1},
     PAGES = {111--132},
  MRNUMBER = {1863023},
}

@article {LYZ2002,
    AUTHOR = {Lutwak, Erwin and Yang, Deane and Zhang, Gaoyong},
     TITLE = {Sharp affine {$L_p$} {S}obolev inequalities},
   JOURNAL = {J. Differential Geom.},
  FJOURNAL = {Journal of Differential Geometry},
    VOLUME = {62},
      YEAR = {2002},
    NUMBER = {1},
     PAGES = {17--38},
  MRNUMBER = {1987375},
}

@article {LYZ2006,
    AUTHOR = {Lutwak, Erwin and Yang, Deane and Zhang, Gaoyong},
     TITLE = {Optimal {S}obolev norms and the {$L^p$} {M}inkowski problem},
   JOURNAL = {Int. Math. Res. Not.},
  FJOURNAL = {International Mathematics Research Notices},
      YEAR = {2006},
     PAGES = {Art. ID 62987, 21},
}

@book {Mazya2011,
    AUTHOR = {Maz'ya, Vladimir},
     TITLE = {Sobolev spaces with applications to elliptic partial differential equations},
   EDITION = {Second, revised and augmented},
    SERIES = {Grundlehren der mathematischen Wissenschaften},
    VOLUME = {342},
 PUBLISHER = {Springer, Heidelberg},
      YEAR = {2011},
}

@article {Mukoseeva2023,
    AUTHOR = {Mukoseeva, Ekaterina},
     TITLE = {The sharp quantitative isocapacitary inequality (the case of {$p$}-capacity)},
   JOURNAL = {Adv. Calc. Var.},
  FJOURNAL = {Advances in Calculus of Variations},
    VOLUME = {16},
      YEAR = {2023},
    NUMBER = {1},
     PAGES = {131--162},
       DOI = {10.1515/acv-2020-0106},
}

@article {Nguyen2016,
    AUTHOR = {Nguyen, Van Hoang},
     TITLE = {New approach to the affine {P}\'{o}lya-{S}zeg\"{o} principle and the stability version of the affine {S}obolev inequality},
   JOURNAL = {Adv. Math.},
  FJOURNAL = {Advances in Mathematics},
    VOLUME = {302},
      YEAR = {2016},
     PAGES = {1080--1110},
}

@article {Polya1948,
    AUTHOR = {P\'{o}lya, George},
     TITLE = {Torsional rigidity, principal frequency, electrostatic capacity and symmetrization},
   JOURNAL = {Quart. Appl. Math.},
  FJOURNAL = {Quarterly of Applied Mathematics},
    VOLUME = {6},
      YEAR = {1948},
     PAGES = {267--277},
}

@book {PolyaSzego1951,
    AUTHOR = {P\'{o}lya, George and Szeg\H{o}, Gabor},
     TITLE = {Isoperimetric inequalities in mathematical physics},
    SERIES = {Annals of Mathematics Studies},
    VOLUME = {27},
 PUBLISHER = {Princeton University Press, Princeton, NJ},
      YEAR = {1951},
}

@article {SchindlerTintarev2018,
    AUTHOR = {Schindler, Ian and Tintarev, Cyril},
     TITLE = {Compactness properties and ground states for the affine {L}aplacian},
   JOURNAL = {Calc. Var. Partial Differential Equations},
  FJOURNAL = {Calculus of Variations and Partial Differential Equations},
    VOLUME = {57},
      YEAR = {2018},
    NUMBER = {2},
     PAGES = {Paper No. 48, 14},
}

@book {Schneider2014,
    AUTHOR = {Schneider, Rolf},
     TITLE = {Convex bodies: the {B}runn-{M}inkowski theory},
   EDITION = {Second expanded},
    SERIES = {Encyclopedia of Mathematics and its Applications},
    VOLUME = {151},
 PUBLISHER = {Cambridge University Press, Cambridge},
      YEAR = {2014},
}

@article {Talenti1976,
    AUTHOR = {Talenti, Giorgio},
     TITLE = {Best constant in {S}obolev inequality},
   JOURNAL = {Ann. Mat. Pura Appl. (4)},
  FJOURNAL = {Annali di Matematica Pura ed Applicata. Serie Quarta},
    VOLUME = {110},
      YEAR = {1976},
     PAGES = {353--372},
}

@article {Wang2013,
    AUTHOR = {Wang, Tuo},
     TITLE = {The affine {P}\'{o}lya-{S}zeg\"{o} principle: equality cases and stability},
   JOURNAL = {J. Funct. Anal.},
  FJOURNAL = {Journal of Functional Analysis},
    VOLUME = {265},
      YEAR = {2013},
    NUMBER = {8},
     PAGES = {1728--1748},
       DOI = {10.1016/j.jfa.2013.06.001},
}

@article {WangXiao2020,
    AUTHOR = {Wang, Tuo and Xiao, Jie},
     TITLE = {A new capacity for the affine bounded variation},
   JOURNAL = {Adv. in Appl. Math.},
  FJOURNAL = {Advances in Applied Mathematics},
    VOLUME = {121},
      YEAR = {2020},
     PAGES = {Paper No. 102084, 27},
}

@article {Xiao2016,
    AUTHOR = {Xiao, Jie},
     TITLE = {The {$p$}-affine capacity},
   JOURNAL = {J. Geom. Anal.},
  FJOURNAL = {Journal of Geometric Analysis},
    VOLUME = {26},
      YEAR = {2016},
    NUMBER = {2},
     PAGES = {947--966},
}

@article {Xiao2017,
    AUTHOR = {Xiao, Jie},
     TITLE = {The {$p$}-affine capacity redux},
   JOURNAL = {J. Geom. Anal.},
  FJOURNAL = {Journal of Geometric Analysis},
    VOLUME = {27},
      YEAR = {2017},
    NUMBER = {4},
     PAGES = {2872--2888},
}

@article {XiaoZhang2016,
    AUTHOR = {Xiao, Jie and Zhang, Ning},
     TITLE = {The relative {$p$}-affine capacity},
   JOURNAL = {Proc. Amer. Math. Soc.},
  FJOURNAL = {Proceedings of the American Mathematical Society},
    VOLUME = {144},
      YEAR = {2016},
    NUMBER = {8},
     PAGES = {3537--3554},
       DOI = {10.1090/proc/12980},
}

@article {Zhang1999,
    AUTHOR = {Zhang, Gaoyong},
     TITLE = {The affine {S}obolev inequality},
   JOURNAL = {J. Differential Geom.},
  FJOURNAL = {Journal of Differential Geometry},
    VOLUME = {53},
      YEAR = {1999},
    NUMBER = {1},
     PAGES = {183--202},
  MRNUMBER = {1776095},
}

@article {HuangLi2016,
    AUTHOR = {Huang, Qingzhong and Li, Ai-Jun},
     TITLE = {Optimal {S}obolev norms in the affine class},
   JOURNAL = {J. Math. Anal. Appl.},
  FJOURNAL = {Journal of Mathematical Analysis and Applications},
    VOLUME = {436},
      YEAR = {2016},
    NUMBER = {1},
     PAGES = {568--585},
       DOI = {10.1016/j.jmaa.2015.11.063},
}

@article {Ivaki2016,
    AUTHOR = {Ivaki, Mohammad N.},
     TITLE = {The planar {B}usemann-{P}etty centroid inequality and its stability},
   JOURNAL = {Trans. Amer. Math. Soc.},
  FJOURNAL = {Transactions of the American Mathematical Society},
    VOLUME = {368},
      YEAR = {2016},
    NUMBER = {5},
     PAGES = {3539--3563},
}

@article {HongYe2018,
    AUTHOR = {Hong, Han and Ye, Deping},
     TITLE = {Sharp geometric inequalities for the general {$p$}-affine capacity},
   JOURNAL = {J. Geom. Anal.},
  FJOURNAL = {Journal of Geometric Analysis},
    VOLUME = {28},
      YEAR = {2018},
    NUMBER = {3},
     PAGES = {2254--2287},
}

@misc {ZhouYe2025,
    AUTHOR = {Zhou, X. and Ye, Deping},
     TITLE = {On the {$m$}th order {$p$}-affine capacity},
      NOTE = {Preprint, arXiv:2505.12573},
      YEAR = {2025},
}

@article {BDV2015,
    AUTHOR = {Brasco, Lorenzo and De Philippis, Guido and Velichkov, Bozhidar},
     TITLE = {Faber-{K}rahn inequalities in sharp quantitative form},
   JOURNAL = {Duke Math. J.},
  FJOURNAL = {Duke Mathematical Journal},
    VOLUME = {164},
      YEAR = {2015},
    NUMBER = {9},
     PAGES = {1777--1831},
}

@article {FuscoZhang2017,
    AUTHOR = {Fusco, Nicola and Zhang, Yi Ru-Ya},
     TITLE = {A quantitative form of {F}aber-{K}rahn inequality},
   JOURNAL = {Calc. Var. Partial Differential Equations},
  FJOURNAL = {Calculus of Variations and Partial Differential Equations},
    VOLUME = {56},
      YEAR = {2017},
    NUMBER = {5},
     PAGES = {Paper No. 138, 44},
       DOI = {10.1007/s00526-017-1224-7},
}

@incollection {BrascoDePhilippis2017,
    AUTHOR = {Brasco, Lorenzo and De Philippis, Guido},
     TITLE = {Spectral inequalities in quantitative form},
 BOOKTITLE = {Shape optimization and spectral theory},
     PAGES = {201--281},
 PUBLISHER = {De Gruyter Open, Warsaw},
      YEAR = {2017},
}
\end{document}